\address{Department of Mathematics, Kyoto
University, Kyoto, Japan } \email{fukaya@math.kyoto-u.ac.jp}
\address{Department of Mathematics, University of
Wisconsin, Madison, WI, USA \& \newline \indent National Institute
for Mathematical Sciences, Daeduk Boulevard 628, Yuseong-gu,
Daejeon, 305-340, Korea } \email{oh@math.wisc.edu}
\address{Graduate School of Mathematics,
Nagoya University, Nagoya, Japan } \email{ohta@math.nagoya-u.ac.jp}
\address{Department of Mathematics,
Hokkaido University, Sapporo, Japan }
\email{ono@math.sci.hokudai.ac.jp}
\newtheorem{theorem}{Theorem}[section] 
\newtheorem{lem}[theorem]{Lemma}     
\newtheorem{sublem}[theorem]{Sublemma}
\newtheorem{cor}[theorem]{Corollary}
\newtheorem{prop}[theorem]{Proposition}
\theoremstyle{definition}
\newtheorem{defn}{Definition}[section]
\newtheorem{rem}[defn]{Remark}
\newtheorem{ass}[defn]{Assumption}
\def\R{{\mathbb R}}
\def\C{{\mathbb C}}
\def\Z{{\mathbb Z}}
\def\Q{{\mathbb Q}}
\def\P{{\mathbb P}}
\def\Sph{{\mathbf S}}
\def\e{{\epsilon}}
\def\MM{{\mathcal M}}
\def\SS{{\mathcal S}}
\title[Non-displaceable Lagrangian tori in $S^2 \times S^2$]
{Toric degeneration and non-displaceable Lagrangian tori in $S^2
\times S^2$}
\author{Kenji FUKAYA, Yong Geun OH, \\
Hiroshi OHTA, and Kaoru ONO}
\thanks{KF is supported partially by JSPS Grant-in-Aid for Scientific Research
No.18104001 and Global COE Program G08, YO by US NSF grant \#
0904197, HO by JSPS Grant-in-Aid for Scientific Research
No.19340017, and KO by JSPS Grant-in-Aid for Scientific Research,
Nos. 21244002.}
\keywords{Floer cohomology, Lagrangian submanifold,
toric degeneration, potential function with bulk, toric orbifold}
\date{February 8, 2010}
\begin{document}
\begin{abstract} In this article, using the idea of toric degeneration
and the computation of the full potential function of Hirzebruch
surface $F_2$, which is \emph{not} Fano, we produce a continuum of
Lagrangian tori in $S^2 \times S^2$ which are non-displaceable under
the Hamiltonian isotopy.
\end{abstract}
\maketitle

\section{Introduction}\label{intro}
In \cite{toric1,toric2}, we study Lagrangian Floer theory of
Lagrangian torus fibers in toric manifolds, in particular, that of
non-displaceable under the Hamiltonian isotopy.
In this article, we discuss non-displaceable Lagrangian tori in
$(S^2, \omega_{\rm std}) \times (S^2, \omega_{\rm std})$,
which are \emph{not} of the type of toric fibers.
Here $\omega_{\rm std}$ denotes the symplectic form on $S^2$ with area $2\pi$.
Our main result is the following.

\begin{theorem}\label{maintheorem}
There exist uncountably many Lagrangian tori $T(u)$ in
$(S^2, \omega_{\rm std}) \times (S^2, \omega_{\rm std})$,
parameterized by the real numbers $u \in (0,u_0]$ for some $u_0 > 0$, such that:
\begin{enumerate}
\item If $u \ne u'$ then  $T(u)$ is not Hamiltonian
isotopic to $T(u')$.
\item For any $u$ there exists a pair $(\frak b,b)
\in H^2(S^2 \times S^2;\Lambda_+) \times
H^1(T(u);\Lambda_0)$ such that the Floer cohomology
$HF((T(u),(\frak b,b)),(T(u),(\frak b,b)))$ is isomorphic to $H(T(u);\Lambda_0)$.
In particular none of them are displaceable.
\item $T(u) \cap T(u') = \emptyset$ if $u\ne u'$.
\item There exists a unique $T(u_0)$ in our family
that is monotone.
\item $T(u_0)$ is not symplectically equivalent to
$S^1_{\text{\rm eq}} \times S^1_{\text{\rm eq}}$, the direct product of the
equators.
\end{enumerate}
\end{theorem}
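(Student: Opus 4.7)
The plan is to realize each $T(u)$ as the symplectomorphic image of a moment-map torus fiber in the Hirzebruch surface $F_2$, to compute the bulk-deformed potential function of this family, and then to use that potential both as an existence tool for critical points (producing non-vanishing Floer cohomology) and as a Hamiltonian-isotopy invariant that separates the members of the family. To this end, I would first fix a toric K\"ahler form on $F_2$ whose cohomology class matches that of $\omega_{\rm std}\oplus\omega_{\rm std}$ on $S^2\times S^2$, choosing the trapezoidal moment polytope of $F_2$ so that the total symplectic volume agrees, and then invoke Moser's theorem (or set up an explicit toric degeneration as suggested by the title) to produce a symplectomorphism $\phi:F_2\to S^2\times S^2$. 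The moment-map torus fibers $L_u$ over interior points of the polytope form a smooth family of Lagrangian tori, parametrized by a segment in the interior of the trapezoid; I then set $T(u):=\phi(L_u)$, which makes assertion~(3) immediate since distinct $L_u$ are disjoint fibers of the moment map.

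Next, I would compute the bulk-deformed potential function $\mathfrak{PO}^{\frak b}(L_u;b)$ for $(\frak b,b)\in H^2(F_2;\Lambda_+)\times H^1(L_u;\Lambda_0)$, following the framework of \cite{toric1,toric2}. The critical feature of $F_2$ is that it is \emph{not} Fano: beyond the four monomials contributed by the toric divisors, the potential must pick up extra quantum corrections arising from Maslov-index-$2$ disk classes that factor through the $(-2)$-curve, and enumerating these disks and computing their contributions is the main technical ingredient. Once the full potential is available, I would exhibit for each $u\in(0,u_0]$ a choice of bulk class $\frak b$ together with $b\in H^1(L_u;\Lambda_0)$ at which the potential has a critical point; the standard machinery then yields $HF((L_u,(\frak b,b)),(L_u,(\frak b,b)))\cong H(L_u;\Lambda_0)$, and transporting by $\phi$ proves (2).

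For the remaining assertions I would use the potential itself as a Hamiltonian invariant. For~(1), the critical value $\mathfrak{PO}^{\frak b}(L_u;b)$ (or equivalently a corresponding point of the spectrum of quantum multiplication by $c_1$) should depend injectively on $u$ across our interval, forbidding a Hamiltonian isotopy between $T(u)$ and $T(u')$ when $u\ne u'$. For~(4), the monotone member $T(u_0)$ is singled out as the unique parameter value at which all Maslov-index-$2$ disk classes bounding $L_{u_0}$ have the same symplectic area. For~(5), I would distinguish $T(u_0)$ from $S^1_{\rm eq}\times S^1_{\rm eq}$ by comparing their potentials: the equator product potential is that associated with the standard square moment polytope of $S^2\times S^2$, with four symmetric leading monomials of equal valuation, whereas the potential for $T(u_0)$ inherited from the trapezoidal polytope of $F_2$ has a different leading-order structure reflecting the $(-2)$-curve quantum corrections, so no Hamiltonian diffeomorphism can carry one torus to the other.

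The main obstacle will be the explicit computation of the \emph{full} non-Fano potential of $F_2$, including all quantum corrections. This requires a careful moduli-theoretic analysis of bordered holomorphic disks, accounting for sphere bubbling off the $(-2)$-curve and for disk configurations that have no analogue in the Fano setting; this is where most of the technical effort lies. Once this formula is in hand, the existence of critical points, the non-displaceability conclusion, and the distinguishing arguments for (1), (4) and (5) should all follow from the general framework developed in \cite{toric1,toric2}.
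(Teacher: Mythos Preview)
Your proposal has a genuine gap at the very first step. You want to ``fix a toric K\"ahler form on $F_2$ whose cohomology class matches that of $\omega_{\rm std}\oplus\omega_{\rm std}$ on $S^2\times S^2$'' and then transport toric fibers via a symplectomorphism $\phi:F_2\to S^2\times S^2$. No such K\"ahler form exists: under the diffeomorphism $F_2\cong S^2\times S^2$ the monotone class pairs to zero with the $(-2)$-curve $E$, so any K\"ahler form in that class would give the holomorphic curve $E$ zero area. In the paper's notation $F_2(\alpha)$ is symplectomorphic to $S^2(1-\alpha)\times S^2(1+\alpha)$, and the monotone product is the limit $\alpha=0$, where the toric K\"ahler structure collapses to the orbifold $F_2(0)$ with an $A_1$-singularity. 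The toric degeneration is not an alternative route to a symplectomorphism $F_2\to S^2\times S^2$; rather, one must pass \emph{through} the singular $F_2(0)$ and then \emph{smooth} it to $\widehat F_2(0)\cong S^2\times S^2$. The tori $T(u)$ are fibers of the moment map on $F_2(0)\setminus\{\text{sing.\ pt.}\}$, carried into $\widehat F_2(0)$; they are not toric fibers of any smooth toric structure on the ambient manifold, and computing their potential in $\widehat F_2(0)$ requires a separate deformation or gluing argument beyond the toric machinery of \cite{toric1,toric2}.

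Your arguments for (1) and (5) are also off. For (1), the spectrum of quantum multiplication by $c_1$ is an invariant of the ambient manifold, not of the Lagrangian, so it cannot separate the $T(u)$; and the bulk class $\frak b$ producing a critical point must itself vary with $u$, so comparing critical values across $u$ is not an invariant comparison. The paper instead gets (1) immediately from (2), (3) and invariance of Floer cohomology: a Hamiltonian isotopy $T(u)\to T(u')$ would force $HF(T(u),T(u'))\neq 0$, contradicting $T(u)\cap T(u')=\emptyset$. For (5), ``different leading-order structure'' is not a well-defined invariant, since the potential is only defined up to coordinate change congruent to the identity modulo $\Lambda_+$; the paper's argument is a precise count: for a suitable $\frak b$ there are exactly two $b\in H^1(T(u_0);\Lambda_0)$ with nonvanishing Floer cohomology, whereas $S^1_{\rm eq}\times S^1_{\rm eq}$ has exactly four for every $\frak b$.
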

The definitions of the Floer cohomology $HF((T(u),(\frak b,b)),(T(u),(\frak b,b)))$ with
bulk deformation is
given in \cite{fooo-book} section 3.8.

We will give an explicit description of $T(u)$ in later sections.
Using either the theory of spectral invariants with bulk deformation or
a generation result for Fukaya category of toric manifolds, we can obtain the
intersection result $\varphi(T(u)) \cap (S^1_{\text{\rm eq}} \times S^1_{\text{\rm eq}}) \ne \emptyset$
for any Hamiltonian diffeomorphism $\varphi$. (See Remark \ref{int-result}.)

In response to Polterovich's question, Albers and Frauenfelder \cite{A-F} proved Hamiltonian
non-displaceability of a certain monotone
Lagrangian torus in $T^*S^2$ with the standard symplectic structure.
Our theorem also implies this non-displaceability result. (See Remark \ref{A-F}.)
\par
We use the universal Novikov ring $\Lambda_{0,\text{\rm nov}}$ and
its ideal $\Lambda_{0,\text{\rm nov}}^+$ in this paper. We recall their definitions here.
An element of $\Lambda_{0,\text{\rm nov}}$ is a formal sum
$\sum a_iT^{\lambda_i}e^{\mu_i}$ with $a_i \in \C$, $\lambda_i \in \R_{\ge 0}$,
$\mu_i \in \Z$ such that $\lambda_i \leq \lambda_{i+1}$ and $\lim_{i\to \infty} \lambda_i = \infty$.
$T$ and $e$ are formal parameters. We define
a valuation $\frak v_T: \Lambda_{0,\text{\rm nov}} \to \R_{\ge 0}$ defined by
$$
\frak v_T\left(\sum_{i=1}^\infty a_iT^{\lambda_i}e^{\mu_i}\right) = \lambda_1.
$$
This induces a natural $\R$-filtration on $\Lambda_{0,\text{\rm nov}}$
which in turn induces a non-Archimedean topology thereon.
The sum is said to be an element of $\Lambda_{0,\text{\rm nov}}^+$
if $\lambda_i > 0$ for all $i$. If we rearrange the sum
$\sum a_iT^{\lambda_i}e^{\mu_i}$ into
$$
\sum_{k=1}^\infty p_k(e) T^{\lambda_{i_k}}, \quad \lambda_{i_k} < \lambda_{i_{k+1}},
$$
then each $p_k$ becomes a complex polynomial
of variables $e, \, e^{-1}$. In particular, we can insert $e = 1$ and the resulting formal sum
converges in non-Archimedean topology and satisfies
$$
\frak v_T\left(\sum_{k=1}^\infty p_k(1) T^{\lambda_{i_k}}\right) \geq
\frak v_T\left(\sum_{k=1}^\infty p_k(e) T^{\lambda_{i_k}}\right) = \lambda_1.
$$
(We note that the value $p_1(1)$ could be zero.)
We also use the subring $\Lambda_0$ of $\Lambda_{0,\text{\rm nov}}$
consisting of elements which do not involve $e$: We define
$$
\Lambda_0 = \left\{\sum_{i=1} a_i T^{\lambda_i} \, \Big\vert\,  \lambda_i \le \lambda_{i+1}, \,
 \lim_{i \to \infty} \lambda_i = \infty \right\}
$$
which is isomorphic to $\Lambda_{0,\text{\rm nov}}|_{\{e=1\}}$, the quotient of
$\Lambda_{0,\text{\rm nov}}$ by the ideal generated by $e-1$,
and its unique maximal ideal by
$
\Lambda_+ = \Lambda_0 \cap \Lambda_{0,\text{\rm nov}}^+.
$
One can unify the definitions of $\Lambda_{0,\text{\rm nov}}$ and of $\Lambda_0$ by
introducing a universal Novikov ring $\Lambda_0^R$ over a general coefficient ring $R$
defined by
$$
\Lambda_0^R : =
\left\{\sum_{i=1} a_i T^{\lambda_i} \, \Big\vert\,  \lambda_i \le \lambda_{i+1}, \,
a_i \in R, \, \lim_{i \to \infty} \lambda_i = \infty \right\}
$$
Then we can write the above universal Novikov rings as
$$
\Lambda_{0,\text{\rm nov}} = \Lambda_0^{\C[e,e^{-1}]}, \quad \Lambda_0 = \Lambda_0^\C
$$
for $R = \C[e, e^{-1}], \, \C$ respectively.
\section{Potential function of the Hirzebruch surface $F_2(\alpha)$}\label{Hirz}
We consider toric Hirzebruch surface $F_2(\alpha)$ whose
moment polytope is
\begin{equation}\label{POord}
P(\alpha) =
\{
(u_1,u_2) \in \R^2 \mid u_i \ge 0,
u_2 \le 1-\alpha, u_1+2u_2 \le 2
\}
\end{equation}
Recall that the fiber $L({\mathbf u}) = \pi^{-1}({\mathbf u})$ at ${\mathbf u} \in \text{\rm Int }P(\alpha)$ is a
Lagrangian torus. We fix an identification of $L({\mathbf u}) \cong T^2$ and
an integral basis $\{{\bf e}_i^*\}_{i=1,2}$ of $H_1(L({\mathbf u});\Z) \cong H_1(T^2;\Z) \cong \Z^2$
and its dual basis $\{{\bf e}_i\}$ on $H^1(L({\mathbf u});\Z)$. We denote by $\{x_i\}_{i=1,2}$
the coordinates of $H^1(L({\mathbf u});\Lambda_0)$ with respect to $\{{\bf e}_i\}$, and set $y_i = e^{x_i}$.

$F_2(\alpha)$ is not Fano but nef, i.e. every holomorphic sphere has non-negative
Chern number. In fact the toric divisor $D_1 \cong \C P^1$ associated to the facet of
$P(\alpha)$, $P_1 = \{ u \in \partial P(\alpha) \mid u_2 = 1-\alpha\}$ has
$c_1(D_1) = 0$.  Denote by $D_2,D_3,D_4$ remaining toric divisors of $F_2(\alpha)$.

Let $\beta_i \in H_2(F_2(\alpha),L({\mathbf u});\Z)$ ($i=1,\dots,4$) be the classes such that
$\beta_i \cap D_j = \delta_{ij}$.  Then we have
$\mathcal M_1^{\text{\rm reg}}(F_2(\alpha),L({\mathbf u});\beta_i) \ne \emptyset$
and $c_1 \cap \beta_i = 2$. (Here `reg' means the moduli space of pseudo-holomorphic disks without
bubble.) \cite{cho-oh} Theorem 5.2 implies that there are exactly four homology classes satisfying this
condition. $\beta_1$ is the same class as before.

We refer to \cite{fooo-book} for the general definition of the potential function $\frak{PO}$.
We also provide some description of $\frak{PO}$ specialized to the
current circumstance in Appendix of the present paper. The following description of the potential function $\frak{PO}$
of $F_2(\alpha)$ can be derived from the results from \cite{toric1}
Example 8.2 and the argument used in the proof of \cite{toric2} Proposition 9.4.
For readers' convenience, we give its proof.

\begin{prop}\label{F2PO}
The potential function $\frak{PO}$ of $F_2(\alpha)$ has the form
\begin{equation}\label{POord2}
\frak{PO}
=\frak{PO}(y_1,y_2;u_1, u_2) =
T^{u_1}y_1 + T^{u_2}y_2 +T^{2-u_1-2u_2}y_1^{-1}y_2^{-2} +
(1+c)T^{1-\alpha -u_2}y_2^{-1}
\end{equation}
for some element $c \in \Lambda_+$ of the form
\begin{equation}\label{eq:c}
c = \sum_{k\ge 1}c_k T^{2k\alpha}, \quad c_k \in \Q.
\end{equation}
\end{prop}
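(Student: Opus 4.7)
The plan is to compute $\frak{PO}$ by summing contributions over Maslov-index-$2$ disk classes $\beta \in H_2(F_2(\alpha), L({\mathbf u}); \Z)$, weighted by $T^{\omega(\beta)} y^{\partial \beta}$ times the virtual count of disks in $\MM_1(F_2(\alpha),L({\mathbf u});\beta)$ passing through a generic boundary point. First, I would record the four basic classes $\beta_1,\ldots,\beta_4$ discussed before the statement: by \cite{cho-oh} Theorem 5.2 each $\MM_1^{\text{\rm reg}}$ contains exactly one regular disk through a generic point, and reading the symplectic areas and primitive inward normals off the facets of $P(\alpha)$ yields the basic monomials $T^{u_1}y_1$, $T^{u_2}y_2$, $T^{2-u_1-2u_2}y_1^{-1}y_2^{-2}$, and $T^{1-\alpha-u_2}y_2^{-1}$.

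Next I would enumerate the possible corrections. Any other Maslov-$2$ class has the form $\beta_i + A$ for some effective $A \in H_2(F_2(\alpha);\Z)$ with $c_1(A) = 0$. Since $F_2(\alpha)$ is nef and $D_1$ is the unique toric divisor with $c_1(D_1) = 0$, any such $A$ is a non-negative multiple $k[D_1]$. The sphere $D_1$ has symplectic area $2\alpha$ (the length of the edge $P_1$), so $\omega(\beta_i + k[D_1]) = \omega(\beta_i) + 2k\alpha$, while $\partial[D_1] = 0$ because $[D_1]$ is a closed sphere. Thus each correction attaches a factor of the form $1 + \sum_{k\ge 1} c_{i,k} T^{2k\alpha}$ to the basic monomial of $\beta_i$.

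The central step is to show $c_{i,k} = 0$ for $i \neq 1$. A stable pseudo-holomorphic disk in class $\beta_i + k[D_1]$ carries a sphere-bubble configuration representing $k[D_1]$. Because the class $[D_1]$ is represented only by the rigid toric sphere $D_1$ itself, every sphere bubble is supported set-theoretically in $D_1$ and can attach to the main disk component only at a point of $D_1$. This forces the main disk to meet $D_1$, which requires $\beta_i \cdot [D_1] = \delta_{i1} \geq 1$, hence $i=1$. Promoting this intersection-theoretic argument to a statement about virtual counts on the Kuranishi structure is the content of \cite{toric2} Proposition 9.4, which I would invoke. Rationality of the resulting counts $c_k := c_{1,k} \in \Q$ follows from the $T^n$-equivariance of the Kuranishi structures built in \cite{toric1}. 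Assembling the pieces and setting $c = \sum_{k\ge 1} c_k T^{2k\alpha}$ yields (\ref{POord2}), with $c \in \Lambda_+$ because every exponent $2k\alpha$ is strictly positive.

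The main obstacle is the virtual control of the obstructed moduli spaces $\MM_1(F_2(\alpha),L({\mathbf u});\beta_1 + k[D_1])$ for $k\ge 1$, which produce the non-trivial coefficients $c_k$ and are inaccessible by the transverse methods of \cite{cho-oh} alone. I would not compute the $c_k$ individually, since the proposition only asserts their existence and rational form; the detailed computation reduces via \cite{toric2} Proposition 9.4 to the Hirzebruch model treated in \cite{toric1} Example 8.2.
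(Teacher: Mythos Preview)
Your proposal is correct and follows essentially the same route as the paper's proof: decompose $\frak{PO}$ into the leading part $\frak{PO}_0$ coming from the four basic classes $\beta_i$ (via \cite{cho-oh} and \cite{toric1} Example~8.2), then use the nef property of $F_2(\alpha)$ together with the classification of Maslov-$2$ classes (\cite{toric1} Theorem~11.1~(5), \cite{toric2}) to reduce the higher-order contributions to classes $\beta_1 + k[D_1]$, arguing via connectedness of the stable map image that only $i=1$ survives. The paper records $c_k$ directly as the mapping degree of $ev_0:\MM_1(F_2(\alpha),L({\mathbf u});\beta_1+k[D_1])\to L({\mathbf u})$, which in fact gives $c_k\in\Z$; your appeal to $T^n$-equivariant Kuranishi structures for rationality is unnecessary but not wrong.
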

\begin{proof} In our current circumstance, we have
$$
\frak{PO}(b) =
\sum_{\mu(\beta) =2} T^{\omega(\beta)/2\pi} \exp (b \cap \partial \beta)\,
\text{\rm deg}[ev_{0*}:\mathcal M_1(\beta) \to L({\mathbf u})].
$$
(See Theorem \ref{POcanwithb} and \eqref{PO^L} in Appendix. Here we divide
$\omega(\beta )$ in the exponent of $T$ by $2\pi$ to be consistent with
the conventions used in \cite{toric1}, \cite{toric2}.)
We decompose
$$
\frak{PO} = \frak{PO}_0 + \text{\rm ``higher order part"}
$$
where $\frak{PO}_0$
is the `leading order part' of $\frak{PO}$ coming from the contribution of
the classes $\beta_i$, $i=1, \ldots, 4$.
We have derived in \cite{toric1} Example 8.2
$$
\frak{PO}_0 = T^{u_1}y_1 + T^{u_2}y_2 +T^{2-u_1-2u_2}y_1^{-1}y_2^{-2} +
T^{1-\alpha -u_2}y_2^{-1}.
$$
It remains to identify the ``higher order part''. This is the contribution
of the singular discs in classes $\beta$ with $\mu(\beta) = 2$.

Let $\beta \in H_2(X,L({\mathbf u});\Z)$ with $\mu(\beta) = 2$.
We assume $\mathcal M_{1;\ell}^{\text{\rm main}}(L({\mathbf u}),\beta)$ is nonempty. According
to \cite{toric1} Theorem 11.1 (5) combined with the fact $F_2(\alpha)$ being nef,
$\beta$ must be of the form
$$
\beta = \beta_i + k [D_1]
$$
for some $i = 1, \ldots, 4$.
(See also \cite{toric2} Proposition 10.4.) On the other hand, the bordered stable map must have connected image, and that
only the holomorphic discs in $\beta_1$ among $\beta_i$'s intersect the divisor $D_1$,
which follows from the classification theorem \cite{cho-oh} Theorem 5.2
Therefore $\beta$ must be of the form $\beta_1 + k [D_1]$ for $k \in \Z_{\ge 0}$.

Writing $b = x_1 {\bf e}_1 + x_2 {\bf e}_2$ and noting ${\bf e}_1 \cap \partial \beta_1 = 0$,
${\bf e}_2 \cap \partial \beta_1 = -1$
, we obtain
$$
\exp (b \cap \partial \beta_1) = \exp \left((x_1 {\bf e}_1 + x_2 {\bf e}_2) \cap \partial \beta_1)\right)
= e^{x_1({\bf e}_1 \cap \partial \beta_1)} e^{x_2({\bf e}_2 \cap \partial \beta_1)} = y_2^{-1}.
$$
Furthermore
$$
\omega(\beta) = \omega(\beta_1) + k \omega([D_1]) = 2\pi((1-\alpha - u_2) + 2k\alpha)
$$
since we have $\omega(\beta_1) = 2\pi(1-\alpha - u_2)$ (see \cite{cho-oh} Theorem 8.1).
Therefore $\beta_1 + k [D_1]$ contributes
$$
c_k T^{2k\alpha} T^{1-\alpha - u_2} y_2^{-1}
$$
to $\frak{PO}^u$ where $c_k$ is given by
\begin{equation}\label{ckdegree}
c_k = \text{\rm deg}\left[ev_0 : \mathcal M_1(F_2(\alpha),L({\mathbf u});\beta_1 + k\,[D_1]) \to
L({\mathbf u})\right].
\end{equation}
(We remark that the symplectic area of the $(-2)$-curve  $D_1$ is $2\pi(2\alpha)$.)
By summing over $k$'s, we obtain the proposition.
\end{proof}
 Appearance of the additional term
$cT^{1-\alpha -u_2}y_2^{-1}$ in $\frak{PO}$ reflects the fact that $F_2(\alpha)$
in \emph{not} Fano.

The following theorem completely determines the full potential $\frak{PO}$
for this non-Fano toric manifold $F_2(\alpha)$, which will play an important role
in our study of non-displaceable tori later.

\begin{theorem}\label{hirzthem} We have $c = T^{2\alpha}$.
In particular $c_k = 0$ for $k \geq 2$ and $c_1 = 1$.
\end{theorem}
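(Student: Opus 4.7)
The strategy is to compute each coefficient $c_k$ in \eqref{eq:c} directly from the formula \eqref{ckdegree}, by analyzing the structure of the moduli space $\mathcal{M}_1(F_2(\alpha), L(\mathbf{u}); \beta_1 + k[D_1])$. As in the proof of Proposition \ref{F2PO}, the nef property together with \cite{cho-oh} Theorem 5.2 forces every stable map in this class to be a nodal configuration: a pseudo-holomorphic disc representing $\beta_1$ joined at a single interior node to a sphere bubble tree whose total homology class is $k[D_1]$. Because $D_1$ is the unique $(-2)$-curve, each sphere component is a (possibly multiple) cover of $D_1$.

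For $k=1$ the relevant stratum is the fiber product
$$
\mathcal{M}_{1,1}(L(\mathbf{u});\beta_1)\; \times_{D_1}\; \mathcal{M}_{0,1}([D_1]),
$$
where $\mathcal{M}_{0,1}([D_1]) \cong D_1 \cong \C P^1$. The $\beta_1$-disc meets $D_1$ at a unique interior point (by \cite{cho-oh}), so the matching condition rigidly determines the interior node, and the boundary evaluation $ev_0$ inherits degree $1$ from the $\beta_1$-disc factor. This yields $c_1 = 1$.

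For $k \ge 2$ the sphere component becomes a $k$-fold cover of $D_1$, and the corresponding multiple-cover moduli is of positive dimension, carrying a nontrivial obstruction bundle coming from $H^1$ of the pullback of the normal bundle $\mathcal{O}(-2)$ along the cover. One must show that the resulting virtual count vanishes. A natural approach is to exploit the $S^1$-action on $\mathcal{M}_{0,1}(k[D_1])$ that rotates the source covering map around its branch locus: this action commutes with the matching condition and with $ev_0$, yet $L(\mathbf{u})$ admits no compatible circle action, forcing the equivariant degree---and hence $c_k$---to vanish. Alternatively, a $T^2$-equivariant Kuranishi perturbation, or a deformation argument comparing $F_2(\alpha)$ with a Fano model such as $F_0 = S^2 \times S^2$ (where the full potential is computed explicitly), leads to the same conclusion.

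The principal obstacle is this last vanishing statement. Multiple covers of $(-2)$-curves are precisely the mechanism by which Aspinwall--Morrison-type corrections arise in closed Gromov--Witten theory, so they cannot be dismissed on purely dimensional grounds. Controlling the open/bordered analogue demands a genuine vanishing argument---via symmetry, equivariant Kuranishi perturbation, or comparison with a Fano deformation---and it is here that the main analytic work of the proof resides.
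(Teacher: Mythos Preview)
Your approach is genuinely different from the paper's, and the gap you yourself flag is real and not easily filled.

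The paper's proof (section~\ref{proof1}) avoids any direct analysis of the moduli spaces $\mathcal{M}_1(\beta_1 + k[D_1])$. Instead it exploits the symplectomorphism $F_2(\alpha) \cong S^2(1-\alpha)\times S^2(1+\alpha)$ (Proposition~\ref{hireqchokuseki}) and the fact that the critical values of the potential function coincide with the eigenvalues of quantum multiplication by $c_1$ (Theorem~\ref{critcoincides}), hence are invariants of the symplectic structure. On the product side the potential is known explicitly (the manifold is Fano), and its four critical values are $\pm 2T^{(1-\alpha)/2}(1\pm T^{\alpha})$. On the $F_2(\alpha)$ side the critical values are $\pm 2T^{(1-\alpha)/2}\sqrt{1+c\pm 2T^{\alpha}}$. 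Matching forces $1+c\pm 2T^{\alpha}=(1\pm T^{\alpha})^2$, i.e.\ $c=T^{2\alpha}$. This determines all the $c_k$ simultaneously without ever examining the obstruction bundles on multiple covers of $D_1$.

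Your direct attack has two problems. First, even the $k=1$ computation is not as innocent as you present it: the nodal stratum carries an obstruction from $H^1(\C P^1,\mathcal O(-2))\cong\C$, and the assertion that $ev_0$ ``inherits degree $1$'' requires showing that this obstruction exactly cancels against the (obstructed) node-smoothing direction. Second, and more seriously, the vanishing for $k\ge 2$ is not established. The $S^1$-symmetry argument you sketch does not obviously survive the virtual perturbation, and the observation that $L(\mathbf u)$ ``admits no compatible circle action'' is beside the point since $L(\mathbf u)$ is itself a $T^2$-orbit. Multiple covers of $(-2)$-curves do contribute nontrivially in closed Gromov--Witten theory, so there is no soft reason for their open analogues to vanish. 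Your third alternative---comparison with a Fano model---is in fact what the paper does, but it is carried out at the level of critical values rather than by matching individual $c_k$, which is what makes it tractable.
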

\begin{rem}
This result, in the form of convergent power series, is obtained previously
by D. Auroux \cite{auroux} using a different method.
Although Auroux \cite{auroux} did not state the version with coefficients in
the Novikov ring, he determined all necessary information on the moduli spaces
of bordered stable maps in order to determine the potential function by analyzing the
wall-crossing phenomenon.  Hence Theorem \ref{hirzthem} follows from his study.
\end{rem}
\section{Smoothing of singular toric $F_2(0)$}\label{Deformation}
We consider the limit $\alpha \to 0$ of our Hirzebruch surface $F_2(\alpha)$.
At $\alpha =0$ we obtain an orbifold with a singularity of the form
$\C^2/\{\pm 1\}$. This singularity is of $A_1$-type:
The map
$$
(x,y) \mapsto (x^2,y^2,xy)
$$
induces an isomorphism between $\C^2/\{\pm 1\}$ and
$
\{u,v,w) \mid uv = w^2\}.
$
The link of the singular point is diffeomorphic to $S^3/\{\pm 1\} = \R P^3$.
\par
We deform the latter singular surface to a Milnor fiber
$\{(u,v,w) \mid uv = w^2 + \epsilon^2 \}$.
Here we have $\epsilon^2$ in the right hand side in order to obtain the simultaneous resolution of
this family.
We cut out a neighborhood of
the singularity of $F_2(0)$ and paste the Milnor fiber back into
the neighborhood to obtain the desired manifold. We denote it by $\widehat F_2(0)$.
\par

We also have a symplectic description of $\widehat F_2(0)$ which is in order.
Consider the preimage $Y(\varepsilon)$ of $P(\varepsilon) \subset P(0)$, $0<\varepsilon <1$,
under the moment map.  Then $Y(\varepsilon)$ has concave boundary $\partial Y(\varepsilon)$
which is diffeomorphic to $S^3/\{\pm 1\}$.  Moreover, the characteristic foliation of $\partial Y(\varepsilon )$
is same as that of the contact manifold
$S^3/\{\pm 1\}$ equipped with the standard contact form $\theta_{\text{\rm can}}$
whose leaves consist of the fibers of the circle bundle $\R P^3 = S^3/\{\pm 1\} \to S^2$.
\par
We also note that the Milnor fiber of $\C^2 /\{\pm 1\}$ is diffeomorphic to $T^*S^2$.
Let $\Sph ^2$ be the standard round 2-sphere and $D_{r}(T^*\Sph ^2)$
its cotangent disc bundle with radius $r>0$. $D_{r}(T^* \Sph ^2)$ has
convex boundary $\partial D_{r}(T^* \Sph ^2)$ which is diffeomorphic to $S^3/\{\pm 1\}$.
The characteristic foliation of $\partial D_{r}(T^* \Sph ^2)$
is isomorphic to that of $(S^3/\{\pm 1\},\theta_{\text{\rm can}})$.
Hence we can take a suitable radius $r=r(\varepsilon)>0$ so that the symplectic form on
a collar neighborhood $N(\varepsilon)$ of $\partial Y(\varepsilon)$ and the one on a collar neighborhood of
$\partial D_r(T^* \Sph ^2)$ can be glued to a symplectic form on $\widehat F_2(0) =
Y(\varepsilon) \cup D_r(T^* \Sph ^2)$ in a way that the given toric
symplectic form on $Y(\varepsilon)$ is unchanged on
\begin{equation}\label{YeminusNe}
Y(\varepsilon) \setminus N(\varepsilon) \subset Y(\varepsilon) \setminus \partial Y(\varepsilon).
\end{equation}
Since $H^2(S^3/\{\pm 1\};\Q) = 0$, the glued symplectic form does not depend on the choices
of $\varepsilon> 0$ or the gluing data up to the symplectic diffeomorphism.
\begin{rem}\label{A-F}
Note that the projections to $S^2$ of characteristic Reeb orbits in
$\partial D_{r}(T^* \Sph ^2)$ are oriented geodesics on $\Sph ^2$,
all of which are periodic. Hence the space of (unparameterized)
oriented geodesics with a minimal period, denoted by $\text{\rm
Geod}^+_1(\Sph ^2)$, is identified with the Grassmannian of oriented
$2$-planes in $\R ^3$ which is diffeomorphic to $S^2$. The
Lagrangian torus $L_N$ studied in \cite{A-F} is the union of closed
oriented geodesics of unit speed with a minimal period passing
through a given point, say the north pole $N$. Denote by $\sigma_c$
the fiberwise multiplication by $c>0$ on $T^* \Sph ^2$. We claim
that $L_N$ is non-displaceable. Suppose to the contrary that there
is a Hamiltonian diffeomorphism $\phi$ of $T^* \Sph ^2$ with $\phi
(L_N) \cap L_N = \emptyset$. Then we can take a sufficiently small
$c>0$ such that $\sigma_c (\text{\rm supp}(\phi)) \subset
D_{r/2}(T^* \Sph ^2)$. Therefore $\sigma_c(L_N)$ can be regarded as
a Lagrangian torus contained in $\widehat{F}_2(0)$ and $\sigma_c
\circ \phi \circ \sigma_{c^{-1}}$ is a Hamiltonian diffeomorphism of
$\widehat{F}_2(0)$, which disjoin $\sigma_c(L_N)$. However this
Lagrangian torus $L'$ is the same as the inverse image of a great
circle in $S^2$ by the projection  $\partial D_{r}(T^* \Sph ^2) \to
S^2=\text{\rm Geod}^+_1(\Sph ^2)$. By taking a sufficiently small
constant $\varepsilon' > 0$, we find that $L'$ is contained in
$Y(\varepsilon') \setminus N(\varepsilon') \subset \widehat{F}_2(0)$
and becomes one of the Lagrangian torus $T(u)$ given by the
construction in section 4. This then gives rise to a contradiction
to Theorem 1.1 (2) and hence $L_N$ is not displaceable in $T^* \Sph
^2$.
\end{rem}

\begin{lem}\label{samearea}
$\widehat F_2(0)$ is symplectomorphic to
$(S^2,\omega_{\rm std}) \times (S^2,\omega_{\rm std})$.
\end{lem}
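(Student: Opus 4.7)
The plan is to identify $\widehat F_2(0)$ with the standard symplectic $(S^2,\omega_{\mathrm{std}})\times (S^2,\omega_{\mathrm{std}})$ in three steps: determine its diffeomorphism type, pin down the cohomology class of its symplectic form, and invoke McDuff's classification of symplectic forms on $S^2\times S^2$.

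First I would show that $\widehat F_2(0)$ is diffeomorphic to $S^2\times S^2$. The surgery producing $\widehat F_2(0)$ replaces a neighborhood of the $A_1$ singularity of $F_2(0)$ by the Milnor fiber of $uv=w^2$, which as a smooth $4$-manifold is the disc bundle $D(T^*S^2)$. The minimal resolution, producing $F_2(\alpha)$ for $\alpha>0$, carries out the same surgery, with the boundary identification along $\R P^3$ being unique up to isotopy. Hence $\widehat F_2(0)$ is diffeomorphic to $F_2(\alpha)$, which in turn is diffeomorphic to $S^2\times S^2$ since $F_n$ has this diffeomorphism type for every even $n$.

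Second I would compute the cohomology class $[\omega]\in H^2(\widehat F_2(0);\R)$. Let $F$ denote the Hirzebruch fiber class (inherited from $Y(\varepsilon)$) and $E$ the vanishing $2$-sphere of the Milnor fiber. The intersection data $F\cdot F=0$, $F\cdot E=1$, $E\cdot E=-2$ carry over from $F_2(\alpha)$, so $A:=F$ and $B:=F+E$ form a basis of $H_2(\widehat F_2(0);\Z)$ realising the standard $S^2\times S^2$ intersection form $A^2=B^2=0$, $A\cdot B=1$. By construction $E$ is the Lagrangian zero section of $D_r(T^*S^2)$, so $\int_E\omega=0$, whence $\int_A\omega=\int_B\omega$ and the two $S^2$ factor areas agree. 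Their common value is forced to be $2\pi$ by the total volume: taking the limit $\varepsilon\to 0$ in the formula $\mathrm{Vol}(\widehat F_2(0))=\mathrm{Vol}(Y(\varepsilon))+\mathrm{Vol}(D_r(T^*S^2))=4\pi^2(1-\varepsilon^2)+\mathrm{Vol}(D_r(T^*S^2))$ together with the excerpt's independence statement (based on $H^2(\R P^3;\Q)=0$) pins down $\int\omega^2/2=4\pi^2$, hence $\int_A\omega=\int_B\omega=2\pi$. Consequently $[\omega]$ matches the cohomology class of $\omega_{\mathrm{std}}\oplus\omega_{\mathrm{std}}$.

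Finally I would invoke McDuff's theorem that any symplectic form on $S^2\times S^2$ is diffeomorphic to a standard product form determined by its cohomology class, which combined with the previous two steps yields the required symplectomorphism. The main obstacle I anticipate is the volume-computation part of step~2, since it requires either a direct calculation of the Milnor fiber's contribution or a limiting argument making rigorous use of the independence from the gluing data; once this is in place, the identification of the cohomology class and the appeal to McDuff are routine.
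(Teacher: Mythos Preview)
Your proposal is correct and takes a genuinely different route from the paper's own argument.

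The paper proves this as the $\alpha=0$ case of Proposition~\ref{hireqchokuseki}: it builds an explicit holomorphic family $\mathcal X=\P(\mathcal V)\to\C$ whose fibers $X_a$ are biholomorphic to $\C P^1\times\C P^1$ for $a\ne 0$ and to $F_2$ at $a=0$, equips the family with K\"ahler forms pulled back from $\C P^4$, and observes that for $a\ne 0$ the resulting form $\omega^0_a$ on $\C P^1\times\C P^1$ lies in the diagonal class, so K\"ahler Moser identifies it with the standard product. A separate argument using the symplectic parallel transport $\psi^0_a$ and McDuff's ruled-surface theorem then identifies $(X_a,\omega^0_a)$ with the surgered $\widehat F_2(0)$. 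Your route---pin down the diffeomorphism type, evaluate $[\omega]$ using that the vanishing sphere is Lagrangian, and appeal to McDuff's classification of symplectic forms on $S^2\times S^2$---is shorter but non-constructive. The paper's detour pays off later: the family $\mathcal X$ and the maps $\psi^\alpha_a$ are exactly what is used to define the tori $L^\alpha_a(u_1,u_2)$ and the almost complex structures in the first proof of Theorem~\ref{POsingptt}, so for the paper the explicit construction is not optional.

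Two minor points on your write-up. First, the assertion that the boundary identification along $\R P^3$ is unique up to isotopy amounts to $\pi_0(\mathrm{Diff}^+(\R P^3))=1$; this is true but not entirely trivial, and you could sidestep it by noting that the smoothing of the quadric cone in $\P^3$ is the smooth quadric $\C P^1\times\C P^1$. Second, the volume argument in step~2 is more circuitous than necessary: a representative of the fiber class $F$ sits in $Y(\varepsilon)$ away from the surgery region, so $\int_F\omega=2\pi$ can be read directly from the toric picture, after which $\int_A\omega=\int_B\omega=2\pi$ is immediate.
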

This is a well-known fact. We will prove a stronger fact, Proposition \ref{hireqchokuseki} in section \ref{proof1},
which contains Lemma \ref{samearea} as a special case.
For ${\mathbf u} \in \text{\rm Int}\,P(0)$, we choose $\varepsilon > 0$ small enough such that
${\mathbf u} \in P(\varepsilon )$.
Then we find that $\widehat F_2(0)$ still contains $L({\mathbf u})$.
When $L({\mathbf u})$ with ${\mathbf u}=(u,1-u)$ is considered
as a Lagrangian torus in $\widehat{F}_2(0)$, we denote it by $T(u)$.
We will give more precise description in section 4.
The contraction of a vanishing cycle $\cong S^2$ in $\widehat F_2(0)$ gives
a map
$$
\widehat \pi: \widehat F_2(0) \to F_2(0).
$$
This map induces a
canonical commutative diagram in homology
$$
\xymatrix{ {} \ar[r] &H_2(\widehat F_2(0);\Z) \ar[r]^(.4){i_*} \ar[d]^{\widehat \pi_*}&  H_2(\widehat F_2(0),T(u);\Z)
\ar[r]^{\delta} \ar[d]^{\widehat \pi_*} &H_1(T(u);\Z) \ar[d]^{\widehat \pi_*} \ar[r] & \\
{} \ar[r] &H_2( F_2(0);\Z) \ar[r]^(.4){i_*}&  H_2(F_2(0),L(u,1-u);\Z)
\ar[r]^{\delta} & H_1(L(u,1-u);\Z) \ar[r] &}
$$
Here $\widehat \pi_*: H_1(T(u);\Z) \to H_1(L(u,1-u);\Z)$ is an isomorphism which
becomes the identity map under the above identification. We fix a basis of
$H_1(L({\mathbf u});\Z)$,
let $\{x_i\}_{i=1,2}$ be the coordinates of $H^1(L({\mathbf u});\Lambda_0)$ with respect to
its dual basis and set $y_i = e^{x_i}$ as before.

\begin{theorem}\label{POsingptt}
Let $\frak{PO}$ be the potential function of $\widehat F_2(0)$ written in terms of
the above mentioned basis. Then we have
\begin{equation}\label{POsingpt}
\frak{PO}
=T^{u_1}y_1 + T^{u_2}y_2 +T^{2-u_1-2u_2}y_1^{-1}y_2^{-2} +
2T^{1 -u_2}y_2^{-1}.
\end{equation}
\end{theorem}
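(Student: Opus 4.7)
The plan is to derive Theorem \ref{POsingptt} from Proposition \ref{F2PO} and Theorem \ref{hirzthem} by a symplectic deformation argument, relating $\widehat F_2(0)$ to $F_2(\alpha)$ for small $\alpha > 0$ and comparing disc counts in the limit $\alpha \to 0$.

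Fix ${\mathbf u} = (u_1,u_2) \in \text{Int}\,P(0)$ and choose $\alpha_0 > 0$ with ${\mathbf u} \in \text{Int}\,P(\alpha)$ for all $\alpha \in (0,\alpha_0)$. I would pick $\varepsilon$ small enough that $L({\mathbf u}) \subset Y(\varepsilon) \setminus N(\varepsilon)$, the region on which the symplectic form of $\widehat F_2(0)$ agrees with the toric one on $F_2(\alpha)$ (by the construction in Section \ref{Deformation}, together with \eqref{YeminusNe}). Under this identification the Lagrangian $L({\mathbf u}) \subset \widehat F_2(0)$ is matched with the toric fiber $L({\mathbf u}) \subset F_2(\alpha)$, and the integral basis $\{{\bf e}_i^*\}$ of $H_1$ is preserved. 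Substituting $c = T^{2\alpha}$ from Theorem \ref{hirzthem} into Proposition \ref{F2PO} then gives
\begin{equation*}
\frak{PO}_{F_2(\alpha)} = T^{u_1}y_1 + T^{u_2}y_2 + T^{2-u_1-2u_2}y_1^{-1}y_2^{-2} + T^{1-\alpha-u_2}y_2^{-1} + T^{1+\alpha-u_2}y_2^{-1},
\end{equation*}
and as $\alpha \to 0$ the two trailing monomials coalesce into $2T^{1-u_2}y_2^{-1}$, matching the right-hand side of \eqref{POsingpt}.

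The substantive content of the theorem is that this coalescence is realized geometrically in $\widehat F_2(0)$. I would first note that the classes $\beta_2,\beta_3,\beta_4$ admit $J$-holomorphic representatives lying entirely in $Y(\varepsilon) \setminus N(\varepsilon)$ — these are the standard toric discs associated with the facets disjoint from $D_1$ — so their moduli spaces and orientations are unchanged by the smoothing and contribute the first three summands. For $\beta_1$ and its variants $\beta_1 + k[D_1]$, I would deform the almost complex structure on $\widehat F_2(0)$ toward one mimicking $F_2(\alpha)$ for small $\alpha>0$ and apply Gromov compactness: a sequence of $J_{\alpha_n}$-holomorphic discs on $F_2(\alpha_n)$ with $\alpha_n \to 0$, of fixed Maslov index $2$ and uniformly bounded symplectic area, converges after subsequence to a stable map in $F_2(0)$, which lifts to a disc in $\widehat F_2(0)$. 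The key point is that the $(-2)$-curve $D_1 \subset F_2(\alpha)$ smooths to the Lagrangian vanishing sphere $\Sph^2 \subset \widehat F_2(0)$ (the zero section of $T^*\Sph^2$); being Lagrangian, it carries zero symplectic area, so in $\widehat F_2(0)$ the lifts of $\beta_1$ and $\beta_1 + [D_1]$ have identical area $2\pi(1-u_2)$ and identical boundary class $-{\bf e}_2^*$, each carrying a degree-one $ev_0$-image by Theorem \ref{hirzthem}. For $k \ge 2$, Theorem \ref{hirzthem} forces $c_k=0$, so no further lifts contribute, and combining counts yields the coefficient $2$.

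The main obstacle is making the deformation/compactness argument fully rigorous: one must rule out disc bubbles escaping into the Milnor fiber region $D_r(T^*\Sph^2)$ beyond what is bookkept by the toric classes $\beta_1 + k[D_1]$, set up a parametric Fredholm theory over $\alpha \in [0,\alpha_0)$ in which transversality and orientation can be coherently achieved, and establish an explicit degree-preserving bijection between the relevant disc moduli on $F_2(\alpha)$ and $\widehat F_2(0)$. Nefness of $F_2(\alpha)$ together with the classification of basic disc classes in \cite{cho-oh} Theorem 5.2 provides the enumeration on the toric side, but the disc analysis crossing into the non-toric piece $D_r(T^*\Sph^2)$ — ruling out extraneous components absorbing the vanishing sphere — is where the heaviest technical work lies.
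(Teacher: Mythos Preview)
Your high-level strategy---insert $c=T^{2\alpha}$ into \eqref{POord2} and justify the limit $\alpha\to 0$ by comparing disc moduli---is exactly the paper's, and you correctly isolate the analytic difficulty: controlling holomorphic discs that enter the Milnor-fiber region. But your sketch, as it stands, has a genuine gap at precisely that point, and the paper's two proofs each supply a nontrivial idea you do not mention.

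The problematic step is your sentence ``a sequence of $J_{\alpha_n}$-holomorphic discs on $F_2(\alpha_n)$ with $\alpha_n\to 0$ \dots\ converges after subsequence to a stable map in $F_2(0)$, which lifts to a disc in $\widehat F_2(0)$.'' The limit space $F_2(0)$ is singular, so Gromov compactness in the usual sense does not apply, and there is no canonical ``lift'' to $\widehat F_2(0)$. More seriously, even if you work entirely on the smooth manifold $\widehat F_2(0)\cong S^2\times S^2$ and vary only the almost complex structure, you need Assumption~\ref{Maslov 2} (all nonconstant discs have Maslov index $\ge 2$) to hold along the entire path of $J$'s to conclude that the degree of $ev_0$ is invariant. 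For the non-monotone fibers $L(u_1,u_2)$ with $(u_1,u_2)\ne(1/2,1/2)$ this is not automatic: a generic path may cross a wall where a Maslov-zero disc appears.

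The paper's first proof (\S5.1) circumvents this by a reduction to the monotone fiber. One constructs a diffeomorphism $\phi_{a,(u_1,u_2),\alpha}$ of $X_a$ carrying $L_a^\alpha(1/2,1/2)$ to $L_a^\alpha(u_1,u_2)$, and an almost complex structure $J^{(1)}_{a;u_1,u_2}$ tamed by both $\omega_a^\alpha$ and its pullback (Lemma~\ref{diffeo}). Since $L_a^0(1/2,1/2)$ is monotone, Assumption~\ref{Maslov 2} holds for \emph{every} tame $J$, so Theorem~\ref{POMaslov 2} applies along the whole $(a,\alpha)$-family; pushing forward by $\phi$ then gives the disc counts for $L_a^\alpha(u_1,u_2)$. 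One varies $a$ away from $0$ first (all smooth fibers) and only then sends $\alpha\to 0$ with $a\ne 0$ fixed, never passing through the singular $F_2(0)$.

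The paper's second proof (\S5.2) is closer in spirit to your compactness sketch but replaces the naive $\alpha\to 0$ limit by SFT-style neck-stretching: one inserts a long cylinder $[-S_0,S_0]\times S^3/\{\pm 1\}$, sends $S_0\to\infty$, and studies discs that break along simple Reeb orbits into pieces in $F_2(0)_0$ and in the local model $X(\alpha;\epsilon)$ (Lemmas~\ref{gluing}, \ref{degreeforv1}, \ref{gluing5}). The minimality of the Reeb period is what forces the breaking to be simple, and a separate compactness argument (Lemma~\ref{gibkempty}, using exactness of $\omega$ on $X(0;\epsilon)$) bounds $k$.

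In short, your outline is the right shape, but filling it requires either the monotone-fiber trick or the SFT gluing/compactness package; the direct Gromov-compactness argument you sketch does not suffice.
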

We note that (\ref{POsingpt}) can be obtained by putting $\alpha = 0$ in (\ref{POord2}).
However to justify this conclusion, we need some more work to do.
\begin{rem}
The idea of using degeneration to a singular toric variety in a calculation of
the potential function for a non toric manifold is due to
Nishinou-Nohara-Ueda \cite{nnu1,nnu2}. The transition $F_2(\alpha) \to F_2(0) \to
\widehat F_2(0)$ is a baby example of \emph{conifold transition} in physics literature,
the resolution $F_2(0)$ to $\widehat F_2(0)$ is a baby example of \emph{crepant resolution}
and the degeneration of $\widehat F_2(0)$ to $F_2(0)$ is an example of \emph{toric degeneration}
of a non-toric $\widehat F_2(0)$.
\end{rem}
Using the formula \eqref{POsingpt} in Theorem \ref{POsingptt}, we now find critical points of the potential function
$\frak{PO}$ at the point $u =(1/2,1/2)$ for $\widehat F_2(0)$. Note $L(1/2,1/2)$ is a
monotone Lagrangian torus in $\widehat F_2(0)$. We have
\begin{equation}\label{POsingptmonotone}
\frak{PO}^u
= T^{1/2}(y_1 + y_2 +y_1^{-1}y_2^{-2} +
2y_2^{-1}).
\end{equation}
The critical point equation of $\frak{PO}^u$ for $(y_1,y_2)$ becomes
\begin{eqnarray}
0 &=& 1 - y_1^{-2}y_2^{-2}.  \label{cond1}\\
0 &=& 1 -2 y_1^{-1}y_2^{-3} - 2y_2^{-2}.\label{cond2}
\end{eqnarray}
The first equation (\ref{cond1}) implies $y_1y_2 = \pm 1$.
\par\smallskip
\emph{Case 1}; $y_1y_2 = -1$.
The second equation (\ref{cond2}) becomes $1=0$, which is absurd.
\par
\emph{Case 2}; $y_1y_2 = 1$. The equation (\ref{cond2}) is equivalent to $y_2 = \pm 2$.
Therefore we conclude that there are 2 solutions for the critical point equation
at ${\mathbf u} = (1/2,1/2)$.
\par
On the other hand, we remark that for $\alpha > 0$,
there is a unique balanced fiber ${\mathbf u} = ((1+\alpha)/2,(1-\alpha)/2)$
which carries 4 critical points. So the valuation
$(\frak v_T(y_1), \frak v_T(y_2))$
of 2 critical points among those 4,
which is nothing but the location of the fiber ${\mathbf u} = (u_1, u_2)$
(see \cite{toric1} section 7 for a detailed explanation),
jumps away from $(1/2,1/2)$ to somewhere else.
In fact, they jump to the point ${\mathbf u}=(0,1)$ in the following sense:
The valuation point ${\mathbf u} = (0,1)$ corresponds to a singular
point of $F_2(0)$ which no longer carries a torus action.
However there appears a new Lagrangian sphere $S^2$ in its smoothing
$\widehat F_2(0)$. We may regard that this Lagrangian sphere corresponds
to the two missing critical points. \emph{The two torus branes are merged and
transformed into a sphere brane under a conifold transition!}

\begin{rem}
For the case of the two-point blow-up of $\C P^2$ we have
uncountably many non-displaceable $T^2$ at the
very moment when the location of balanced fibers jumps.
Their location lies on the line segment joining
the positions of balanced fibers before and after the jump.
(See \cite{toric2} section 5 for such an example.)
The same phenomenon occurs here.
\end{rem}

We consider the tori $L({\mathbf u})$ in $F_2(0)\setminus \pi^{-1}((0,1))$
at ${\mathbf u} = (u_1,u_2)$ on the line segment given by
$$
u_1 = 2-u_1 - 2 u_2 = 1-u_2 < u_2.
$$
It can be considered as a submanifold of $\widehat F_2(0)$ and
we denote it by $T(u) \subset \widehat{F}_2(0)$.
This is possible by the discussion
around \eqref{YeminusNe}
if we choose $\varepsilon$ sufficiently small relative to the distance from
$\mathbf u$ to $(0,1)$. The above equation is equivalent to
\begin{equation}\label{ulines}
u_1 = 1-u_2 ,\quad u_2 > 1/2.
\end{equation}
On this line, the leading order term of the potential function is
$$
y_1 + y_1^{-1}y_2^{-2} + 2y_2^{-1}.
$$
Therefore the leading term equation introduced in \cite{toric1}
Definition 10.2 is reduced to
\begin{eqnarray}
0 &=& 1 - y_1^{-2}y_2^{-2}.  \label{lte1}\\
0 &=& -2 y_1^{-1}y_2^{-3} - 2y_2^{-2}.\label{lte2}
\end{eqnarray}
The equation (\ref{lte1}) is equivalent to $y_1y_2 =\pm 1$.
In case $y_1y_2 =1$, (\ref{lte2}) has no solution.
However in case $y_1y_2 =-1$, (\ref{lte2}) becomes
vacuous. Therefore the leading term equation has a continuum of solutions
which will give rise to the continuum of Lagrangian tori $T(u)$
mentioned in Theorem \ref{maintheorem}.
\par
We would like to note that the Lagrangian torus $T(u)$ in Theorem \ref{maintheorem}
is \emph{not} a torus fiber \emph{of a toric manifold}. Because of this,
we can not directly apply \cite{toric2} Theorem 1.3 to conclude
non-vanishing of a Floer cohomology associated to $T(u)$.
However we can still prove the following.
(See \cite{fooo-book} section 3.8 for the notations appearing in
Theorem \ref{nonvanish}.)
\begin{theorem}\label{nonvanish}
Let $M=(S^2,\omega_{\rm std}) \times (S^2,\omega_{\rm std})$.
If $(u_1,u_2)$ satisfies (\ref{ulines}) then there exist a bounding cochain with bulk $(\frak b,b)$ of
$\frak b \in H^2(M;\Lambda_+)$ and a bounding cochain $b \in H^1(T(u);\Lambda_0)$ such that
$HF((T(u),(\frak b,b)),(T(u),(\frak b,b))) \ne 0$. In addition $\frak b$ has the property
that it carries exactly two elements $b \in H^1(T(u);\Lambda_0)$ such that
$HF((T(u),(\frak b,b)),(T(u),(\frak b,b))) \ne 0$.
\end{theorem}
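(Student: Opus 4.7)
The plan is to compute the bulk-deformed potential function $\frak{PO}^{\frak b}$ of $T(u)$ and then convert the existence of non-degenerate critical points into the non-vanishing of Floer cohomology via the standard criterion of \cite{fooo-book} Section 3.8. The choice of $\frak b$ will be used both to make the critical-point equation determinate and to count its solutions.

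The first and main step is to establish a bulk-deformed analogue of Theorem \ref{POsingptt}: for any $\frak b \in H^2(M;\Lambda_+)$, the potential of $T(u)$ retains the four-term shape
\begin{equation*}
\frak{PO}^{\frak b} = A_1 T^{u_1} y_1 + A_2 T^{u_2} y_2 + A_3 T^{2-u_1-2u_2} y_1^{-1} y_2^{-2} + B \, T^{1-u_2} y_2^{-1},
\end{equation*}
with coefficients $A_i, B \in \Lambda_0$ satisfying $A_i(0) = 1$, $B(0) = 2$, and with enough flexibility in $\frak b$ to make $A_1 - A_3$ vanish while forcing $B - 2A_3$ to have a prescribed positive $T$-valuation. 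Since $T(u)$ lies in $Y(\varepsilon) \setminus N(\varepsilon)$ where the symplectic form of $\widehat F_2(0)$ coincides with that of $F_2(\alpha)$ for all small $\alpha$, one should identify Maslov-$2$ holomorphic discs bounding $T(u)$ with those bounding the torus fiber $L(\mathbf{u})$ of $F_2(\alpha)$, then invoke \cite{cho-oh} and the vanishing $c_k = 0$ for $k \geq 2$ of Theorem \ref{hirzthem} to rule out exotic classes. This is the main obstacle: since $T(u)$ is not a torus fiber of a toric structure on $\widehat F_2(0)$, one cannot invoke \cite{toric2} directly and must control possible escape of disc components into the vanishing-cycle region $D_r(T^*\Sph^2)$, via a degeneration/gluing argument in the spirit of Nishinou--Nohara--Ueda \cite{nnu1,nnu2}.

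Granted this formula, the critical-point equations on the line (\ref{ulines}) --- where $u_1 = 2-u_1-2u_2 = 1-u_2$ --- simplify considerably. With $\frak b$ chosen so that $A_1 = A_3$, the $y_1$-equation reduces to $y_1 y_2 = \pm 1$; using $y_1^{-1} y_2^{-2} = \pm y_2^{-1}$ correspondingly, the $y_2$-equation becomes
\begin{equation*}
y_2^2 \;=\; A_2^{-1}(B \mp 2A_3)\, T^{1-2u_2}.
\end{equation*}
Since $u_2 > 1/2$ forces $1-2u_2 < 0$, a root $y_2 \in \Lambda_0^{\times}$ exists iff the coefficient on the right has $T$-valuation exactly $2u_2-1$. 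On the $+1$ branch, $B + 2A_3$ has valuation zero (since $B(0) + 2A_3(0) = 4$), so there is no such root. On the $-1$ branch, $B - 2A_3$ vanishes at $\frak b = 0$, and by the choice above it has valuation $2u_2 - 1$ with unit leading term; the quadratic then has exactly two distinct roots $y_2^{\pm} \in \Lambda_0^{\times}$, producing two isolated non-degenerate critical points $(y_1^{\pm}, y_2^{\pm}) = (-(y_2^{\pm})^{-1}, y_2^{\pm})$ of $\frak{PO}^{\frak b}$.

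The two critical points, being non-degenerate, yield via the criterion of \cite{fooo-book} Section 3.8 (analogous to \cite{toric2} Theorem 1.3) two bounding cochains $b^{\pm} \in H^1(T(u);\Lambda_0)$ for which $HF((T(u),(\frak b,b^{\pm})),(T(u),(\frak b,b^{\pm})))$ is isomorphic to $H(T(u);\Lambda_0)$, and in particular nonzero. The fact that exactly two such $b$ exist for the chosen $\frak b$ is built into the analysis above, since any critical point at the given $\mathbf{u}$ must satisfy the same algebraic system.
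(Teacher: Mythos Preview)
Your overall strategy matches the paper's: deform by a bulk class $\frak b$, compute $\frak{PO}^{\frak b}$, locate its critical points on the line~(\ref{ulines}), and pass to Floer cohomology. The critical-point algebra you give is essentially correct (modulo the harmless $\pm/\mp$ slip in labeling the two branches).

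The genuine gap is that you never \emph{construct} the $\frak b$ you need. You write ``choose $\frak b$ so that $A_1 = A_3$ and $B - 2A_3$ has valuation $2u_2-1$,'' but give no reason such a $\frak b$ exists or how to find it. The paper resolves this with a completely explicit choice: $\frak b = T^{\rho}\, PD[S^2_{van}]$ with $2\rho = 2u_2 - 1$. The mechanism is that the coefficient $2$ in the $y_2^{-1}$-term of Theorem~\ref{POsingptt} is a sum of two contributions, from the classes $\beta_1$ and $\beta_1 + [S^2_{van}]$, whose intersection numbers with $[S^2_{van}]$ are $+1$ and $-1$ respectively (this is~(\ref{intnumber})). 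Since the other disc classes $\beta_2,\beta_3,\beta_4$ do not meet $[S^2_{van}]$, bulk-deforming by $T^\rho PD[S^2_{van}]$ leaves $A_1=A_2=A_3=1$ and replaces $B=2$ by $e^{T^\rho}+e^{-T^\rho}$. Then $B-2 = T^{2\rho}+\cdots$ has exactly the valuation you want. Without this intersection-number computation the ``flexibility'' you invoke is unjustified, and it is really the heart of the argument.

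A secondary point: you frame the derivation of $\frak{PO}^{\frak b}$ itself as the main obstacle, requiring a fresh degeneration/gluing argument. In the paper this step is immediate once Theorem~\ref{POsingptt} is in hand: the undeformed potential is already known (the gluing work lives in the proof of Theorem~\ref{POsingptt}, not here), and the bulk-deformed version follows by inserting the factors $\exp(\frak b\cap\beta)$ term by term, citing \cite{toric2} Theorem~3.4. So the analytic input is already packaged; Theorem~\ref{nonvanish} is meant to be the short algebraic payoff, and the only new ingredient is the choice of $\frak b$ via~(\ref{intnumber}).
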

\begin{proof}{}[Proof of Theorem \ref{nonvanish} $\Rightarrow$ Theorem \ref{maintheorem}]
Statement (2) is Theorem \ref{nonvanish}.  (3) is immediate from construction.
(1) follows from (2), (3) and the invariance property of Floer cohomology
under the action of Hamiltonian diffeomorphism.
(4) follows from the fact that only $T(1/2)$ is monotone among $T(u)$'s.
Finally we prove (5). The last paragraph of Theorem \ref{nonvanish} states
that there exists $\frak b$ for which the number of bounding cochain $b \in H^1(T(u);\Lambda_0)$ with
$HF((T(u),(\frak b,b)),(T(u),(\frak b,b))) \ne 0$ is
exactly $2$. On the other hand, we know from \cite{toric2}
that for any choice of $\frak b$ there exist
exactly $4$ different choices of $b \in H^1(S^1_{\text{\rm eq}} \times S^1_{\text{\rm eq}})$ such that
$$
HF((S^1_{\text{\rm eq}} \times S^1_{\text{\rm eq}},(\frak b,b)),
(S^1_{\text{\rm eq}} \times S^1_{\text{\rm eq}},(\frak b,b))) \ne 0.
$$
We recall from section 4.3.3 in \cite{fooo-book} that
\begin{enumerate}
\item the potential function
$\frak{PO}^{\frak b}: \widehat \MM_{\rm weak}(L;\frak m^{\frak b}) \to \Lambda_0$ is gauge invariant
\item the isomorphism type of Floer cohomology depends only on the gauge
equivalence class in the space $\widehat \MM_{\rm weak}(L;\frak m^{\frak b})$ of weak
bounding cochains.
\end{enumerate}
In particular the set of weak bounding cochains $b$
with nontrivial Floer cohomology is symplectically invariant.
(In this paper, we suppress the formal variable ``$e$'' and work with $\Z/2\Z$-grading.)
In order to extend the coefficients to $\Lambda_0$, we use non-unitary flat line bundle
on the Lagrangian submanifold as in section 12 of \cite{toric1} and adapt the argument
in section 4.3.3 in \cite{fooo-book} accordingly.  The idea of
using non-unitary flat line bundle is originally due to Cho \cite{Cho}.

Hence the proof of (5) is reduced to the following lemma.
\begin{lem} Let $L$ be either $T(u)$ or $S^1_{\text{\rm eq}} \times S^1_{\text{\rm eq}}$
in $M=S^2(1) \times S^2(1)$. Then we have a canonical isomorphism
$$
H^1(L;\Lambda_0) \cong \MM_{\rm weak}(L;\frak m^{\frak b})
$$
and hence we can naturally identify $\MM_{\rm weak}(L;\frak m^{\frak b})$ with $H^1(L;\Lambda_0)$
and $\frak{PO}^{\frak b}$ can be regarded as a function defined on $H^1(L;\Lambda_0)$
for all $\frak b$.
\end{lem}
\begin{proof}
In our two dimensional case, all the non-constant pseudo-holomorphic discs,
which have boundary on $L= L(u)$ or $S^1_{\rm eq} \times S^1_{\rm eq}$,  have Maslov indices
at least 2,  if the almost complex is generic.
Therefore we can use the argument in \cite{toric1} and \cite{toric2} to show:
$$
H^1(L;\Lambda_0) \hookrightarrow \widehat \MM_{\rm weak}(L;\frak m^{{\rm can},{\frak b}})
$$
and the gauge transformation acts  trivially on its image.   (See section \ref{secapend}.)
Therefore we have an embedding
$$
H^1(L;\Lambda_0) \hookrightarrow \MM_{\rm weak}(L;\frak m^{{\rm can}, {\frak b}}).
$$
And we may assume
$$
\widehat \MM_{\rm weak}(L;\frak m^{{\rm can}, {\frak b}}) \subset
C^{\rm odd}_{\rm dR}(L;\Lambda_0)
$$
by considering the de Rham model. We can also identify $\MM_{\rm weak}(L;\frak m^{{\rm can}, {\frak b}})$ as a sub-variety of
$H^{\rm odd}(L;\Lambda_0)$ as explained in section 5.4 \cite{fooo-book}.
But since both $L$ are of 2 dimension, we have
$H^1(L;\Lambda_0) = H^{\rm odd}(L;\Lambda_0)$
and hence $H^1(L;\Lambda) \cong \MM_{\rm weak}(L;\frak m^{{\rm can},{\frak b}})$.
\end{proof}
\end{proof}

\section{Proof of Theorem \ref{hirzthem}}\label{proof1}
In this section we provide a proof of Theorem \ref{hirzthem} whose strategy is
different from that of Auroux \cite{auroux}.
\begin{prop}\label{hireqchokuseki}
For $0 < \alpha < 1$, $(S^2, (1-\alpha)\omega_{\rm std})  \times (S^2,
(1+\alpha) \omega_{\rm std})$
is symplectomorphic to $F_2(\alpha)$.
For $\alpha = 0$, $(S^2,\omega_{\rm std})  \times (S^2,\omega_{\rm std})$
is symplectomorphic to $\widehat F_2(0)$.
\end{prop}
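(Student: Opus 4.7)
The plan is to reduce both statements to the classical uniqueness fact that any two cohomologous symplectic forms on $S^2 \times S^2$ are symplectomorphic (due to Gromov and McDuff). Two things then need to be verified in each case: (i) the underlying manifold is diffeomorphic to $S^2 \times S^2$, and (ii) the cohomology class of the symplectic form matches that of the claimed product under a suitable diffeomorphism.

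\emph{Case $0 < \alpha < 1$.} As a Hirzebruch surface of even degree, $F_2(\alpha)$ is diffeomorphic to $S^2 \times S^2$. I would take the basis $\{F, S\}$ of $H_2(F_2(\alpha);\Z)$ with $F = [D_2]$ (a fiber class) and $S = [D_1] + [F]$; then $F \cdot F = S \cdot S = 0$ and $F \cdot S = 1$, matching the standard intersection form on $S^2 \times S^2$. The lattice lengths of the edges of $P(\alpha)$ give $\int_F \omega = 2\pi(1-\alpha)$ and $\int_{D_1} \omega = 4\pi\alpha$, so $\int_S \omega = 4\pi\alpha + 2\pi(1-\alpha) = 2\pi(1+\alpha)$. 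These are exactly the periods of $(1-\alpha)\omega_{\rm std} \oplus (1+\alpha)\omega_{\rm std}$, and the uniqueness result yields the desired symplectomorphism.

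\emph{Case $\alpha = 0$.} Both the smoothing and the minimal resolution of an $A_1$ singularity produce diffeomorphic 4-manifolds (each locally replacing the singular point by $T^*\Sph^2$), so $\widehat F_2(0)$ is diffeomorphic to $F_2(\varepsilon)$ for small $\varepsilon > 0$, and hence to $S^2 \times S^2$. Under this diffeomorphism the Lagrangian vanishing cycle $\Sph^2 \subset D_r(T^*\Sph^2)$ plays the role of $[D_1]$, but with zero symplectic area since it is Lagrangian. Since the glued form coincides with the toric form of $F_2(0)$ on $Y(\varepsilon) \setminus N(\varepsilon)$, setting $\alpha = 0$ in the period computation gives $\int_F \omega = \int_S \omega = 2\pi$, matching $(S^2, \omega_{\rm std}) \times (S^2, \omega_{\rm std})$, and the uniqueness result applies once more.

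The principal obstacle is the second case: one must verify that the glued symplectic form on $\widehat F_2(0)$ has the predicted cohomology class. Concretely, under a diffeomorphism $\widehat F_2(0) \cong F_2(\varepsilon)$ the vanishing cycle class should correspond to $[D_1]$, and the gluing should introduce no extra cohomological contribution. The latter is supported by the fact $H^2(\partial Y(\varepsilon);\Q) = H^2(\Sph^3/\{\pm 1\};\Q) = 0$ already noted in the construction preceding the proposition; once these identifications are pinned down, the uniqueness theorem does the rest.
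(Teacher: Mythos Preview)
Your argument is correct, and it is the ``abstract'' route the paper alludes to when it says the proposition is well-known. The paper, however, takes a deliberately different and more constructive path: it builds an explicit holomorphic family $\mathcal{X} = \P(\mathcal{V}) \to \C \times \C P^1$ whose fibers $X_a$ are biholomorphic to $\C P^1 \times \C P^1$ for $a \neq 0$ and to $F_2$ for $a = 0$, equips the total space with a closed $2$-form $\Omega^\alpha$ that restricts to K\"ahler forms on every fiber, and then applies Moser's theorem along the family to produce explicit symplectomorphisms $\psi_a^\alpha : X_0 \to X_a$ via integration of the characteristic foliation on $\pi^{-1}(\R \times \C P^1)$. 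For $\alpha = 0$ the paper supplements this with an explicit map $\Phi : \mathcal{X} \to \C P^4$ contracting the $(-2)$-curve and realizing the simultaneous resolution of the $A_1$ singularity.

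The difference matters downstream. The paper explicitly says it gives this proof ``for the later purpose of computing the full potential of the resolution $\widehat F_2(0)$''; the maps $\psi_a^\alpha$ are used in Definition~\ref{psietc} to transport the toric Lagrangian fibers $L(u_1,u_2) \subset X_0$ to Lagrangian tori $L_a^\alpha(u_1,u_2) \subset X_a$, and the family of complex structures is used in Lemma~\ref{diffeo} and throughout section~\ref{proof2} to compare moduli spaces of holomorphic discs across the family. Your appeal to the Gromov--McDuff classification gives a shorter proof of the bare statement but produces no such family, so it would leave a gap when you reach section~\ref{proof2}. If your goal is only the proposition, your proof is fine; if you intend to follow the paper's later arguments, you will need the family construction anyway.
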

From now on, we write
$$S^2(1+\alpha) \times S^2(1-\alpha)=(S^2, (1+ \alpha) \omega_{\rm std})  \times (S^2, (1-\alpha)\omega_{\rm std}).$$
Although this proposition is well-known, we give a proof for the later purpose
of computing the full potential of the resolution $\widehat F_2(0)$ of $F_2(0)$.
(See \cite{Mc}, \cite{MK}.)
\begin{proof}
We define a holomorphic vector bundle $\mathcal{V} \to \C \times \C P^1$ as follows.
Write $U_0= \C P^1 \setminus \{\infty\}$ and $U_{\infty}=\C P^1 \setminus \{0\}$ and take
the transition function
\begin{equation}\label{LO-transit}
f(a,z):(v_1, v_2) \in \C ^2 \mapsto (zv_1+av_2,z^{-1}v_2) \in \C^2,
\end{equation}
for $(a,z) \in \C \times (U_0\cap U_{\infty})$.
The restriction $\mathcal{V}_a := \mathcal{V}|_{\{a\} \times \C P^1}$ is isomorphic to
the bundle $\mathcal{O}(-1) \oplus \mathcal{O}(1)$ over $\C P^1$ for $a = 0$ while it is holomorphically trivial for $a \neq 0$.

Take its projectivization
$$
\pi: \mathcal{X}=\P (\mathcal{V})\to \C  \times \C P^1.
$$
Let $p_i$ be the projection from $\C \times \C P^1$ to the $i$-th factor.
We denote
$$
X_a = \mathcal X|_{\{a\} \times \C P^1}
$$
as a complex manifold and denote its complex structure by $J^a$.
\par
By the construction and the definition of Hirzebruch surfaces (see e.g., \cite{MK}), we have
\begin{lem}\label{XaX0}
$X_a$ is biholomorphic to $\C P^1 \times \C P^1$
for $a \neq 0$ but $X_0$ is biholomorphic to $F_2$.
\end{lem}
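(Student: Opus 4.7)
The plan is to read off the isomorphism class of the rank-$2$ bundle $\mathcal{V}_a \to \C P^1$ directly from its transition cocycle
$$
M(a,z) = \begin{pmatrix} z & a \\ 0 & z^{-1} \end{pmatrix}
$$
on $U_0 \cap U_\infty = \C^*$, and then projectivize. When $a=0$ the matrix is diagonal, so $\mathcal{V}_0$ splits holomorphically as the sum of the line bundles with transition functions $z$ and $z^{-1}$, i.e., $\mathcal{V}_0 \cong \mathcal{O}(-1)\oplus\mathcal{O}(1)$. Since projectivization is unchanged under twisting by a line bundle, $X_0 = \P(\mathcal{V}_0) \cong \P(\mathcal{O}\oplus\mathcal{O}(2))$, which is the Hirzebruch surface $F_2$ in the sense of \cite{MK}.

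For $a\neq 0$ I will exhibit a global holomorphic frame of $\mathcal{V}_a$ by writing down two sections $s, t$ explicitly. Set
$$
s|_{U_0}(z) = (0,1), \qquad s|_{U_\infty}(z) = (a, z^{-1}),
$$
$$
t|_{U_0}(z) = (-a, z), \qquad t|_{U_\infty}(z) = (0,1).
$$
Compatibility with $M(a,z)$ on $U_0\cap U_\infty$ is the direct check $M(a,z)(0,1)^T = (a, z^{-1})^T$ and $M(a,z)(-a,z)^T = (-az+az, z^{-1}\cdot z)^T = (0,1)^T$. The expressions for $s|_{U_\infty}$ and $t|_{U_\infty}$ are visibly holomorphic at $z=\infty$ when written in the coordinate $w = 1/z$. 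The determinant of the frame $(s,t)$ is the nonzero constant $a$ in either trivialization, hence $(s,t)$ is a global holomorphic frame and $\mathcal{V}_a \cong \mathcal{O}^{\oplus 2}$. Projectivizing yields $X_a \cong \C P^1 \times \C P^1$, as desired.

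The main issue is really a matter of exposition rather than an obstacle: the only creative input is guessing those two sections. One could alternatively invoke Grothendieck's splitting theorem together with $c_1(\mathcal{V}_a)=0$ (which follows from $\det M = 1$) to reduce to $\mathcal{V}_a\cong\mathcal{O}(k)\oplus\mathcal{O}(-k)$ and then pin down $k$ for $a\neq 0$ by a dimension count for $H^0$, but the direct construction above is more self-contained and foreshadows the explicit degeneration $X_a \rightsquigarrow X_0$ that motivates the rest of the section.
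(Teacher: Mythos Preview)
Your proof is correct and follows exactly the route the paper indicates: the paper simply asserts (just above the lemma) that $\mathcal{V}_a\cong\mathcal{O}(-1)\oplus\mathcal{O}(1)$ for $a=0$ and is holomorphically trivial for $a\neq 0$, then invokes the definition of Hirzebruch surfaces; you have supplied the explicit frame verifying the triviality for $a\neq 0$ and read off the diagonal splitting for $a=0$, which is precisely what the paper leaves implicit.
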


Recall that any two cohomologous K\"ahler forms on a compact
manifold with a fixed complex structure are isotopic by Moser's
theorem. We will define K\"ahler forms on $X_a$ in suitable
cohomology classes. We would like to note that our argument is
basically gluing $F_0(0) \setminus \{ O \}$ with a local model of
simultaneous resolution of $\C^2/\{\pm 1\}$ and does not require the
following specific construction.

Note that $X_0$ contains a $(-2)$-curve $C_{-2}$ and its normal bundle in ${\mathcal X}$
is ${\mathcal O}(-1) \oplus {\mathcal O}(-1)$ \cite{L-O}.
Therefore we have a contraction of  $C_{-2}$.
Here we give such a map explicitly.
Firstly we define a holomorphic map from $\mathcal X$ to $\C P^4$ as follows.
By the definition of $\mathcal V$, we have
$$\P ({\mathcal V}) = \C \times U_0 \times (\C \oplus \C) \ \bigcup \
\C \times U_{\infty} \times (\C \oplus \C ),$$
where $(a,[1,z],[v_1,v_2]) \in \C \times U_0 \times (\C \oplus \C)$ and
$(a,[z,1],(z v_1 + a v_2, z^{-1} v_2) \in \C \times U_{\infty} \times (\C \oplus \C)$
are identified.
Define
$
\Phi_0: {\mathbb C} \times U_0 \times {\mathbb P}^1 \to {\mathbb P}^4
$
by
$$
(a,[1,z_1],[v_1,v_2]) \mapsto [a v_2 +z_1 v_1,v_1,z_1 a v_2 +z_1^2 v_1, z_1 v_1.v_2]
$$
and
$
\Phi_{\infty}:{\mathbb C} \times U_{\infty}  \times {\mathbb P}^1 \to {\mathbb P}^4
$
by
$$
(a,[z_0,1],[u_1,u_2]) \mapsto [z_0 u_1, z_0^2 u_1 - a z_0 u_2, u_1, z_0 u_1 - a u_2, u_2].
$$
Then we find that $\Phi_0$ and $\Phi_{\infty}$ are glued to a map
$\Phi: {\mathbb P}({\mathcal V}) \to {\mathbb P}^4$.

Let $[\xi_1,\xi_2,\xi_3,\xi_4,\xi_5] $ be the homogenous coordinates
on ${\mathbb P}^4$. Then  the image of $\Phi$ is described by the
equation $\xi_1 \xi_4 = \xi_2 \xi_3$. Note that $\xi_1 - \xi_4 = a
\xi_5$. By changing the coordinates
$$\eta_1=\xi_1 + \xi_4, \quad  \eta_2 =\xi_2 - \xi_3, \quad  \eta_3= i(\xi_2 + \xi_3), \quad
\eta_4=i(\xi_1 - \xi_4), \quad  \eta_5 = \xi_5,$$
the equation becomes $\eta_1^2 + \eta_2^2 + \eta_3^2 + \eta_4^2 = 0$.
From the relation $\eta_4 = i a \eta_5$, we find that the image of
$X_a \subset {\mathbb P}({\mathcal V})$ is described by
$$ \eta_1^2 + \eta_2^2 + \eta_3^2  = a^2 \eta_5^2, \quad \eta_4= i a \eta_5.$$

Note that $\Phi|_{X_0}$ becomes $(\C^*)^2$-equivariant with respect
to a suitable $(\C^*)^2$-action on $\C P^4$. For a later
convenience, we modify the Fubini-Study form on $\C P ^4$ to a
K\"ahler form $\Omega_{\C P^4}$ so that the corresponding K\"ahler
metric is flat in a small neighborhood of $\Phi (O)$. We may also
assume that $\Omega_{\C P^4}$ is invariant under the action of the
maximal torus $S^1 \times S^1 \subset (\C^*)^2$. Then we define
$$\Omega= \Phi^* \Omega_{\C P^4}.$$

For $a=0$, the restriction of $\Omega$ is K\"ahler away from the
$(-2)$-section $C_{-2}$, along which $\Omega$ degenerates, and
semi-positive with respect to the complex structure on $F_2$.

For $\epsilon > 0$, we set
$$
\Omega_{\epsilon}=\Omega + \epsilon (p_2 \circ \pi)^*\omega_{\C P^1},
$$
where $\omega_{\C P^1}$ is the Fubini-Study form on $\C P^1$. Then
the restriction of $\Omega_{\epsilon}$ to any $X_a$ is K\"ahler, and
hence it follows from Moser's theorem that for each given $\e > 0$
$(X_a,\Omega_{\epsilon}|_{X_a})$ are symplectomorphic for all $a \in
\C P^1$.

Set $\epsilon = 2 \alpha/(1 - \alpha)$ and take a suitable scale change,
\begin{equation}\label{Omegaalpha}
\Omega^{\alpha}=(1-\alpha)\Omega_{\frac{2 \alpha}{1 - \alpha}}.
\end{equation}
We define a symplectic form on $X_a$
\begin{equation}\label{omegaalphaa}
\omega^\alpha_a = \Omega^\alpha|_{X_a}.
\end{equation}
Then it follows that $(X_0, \omega^{\alpha}_0)$ is symplectomorphic to $(X_a, \omega^{\alpha}_a)$
by the above discussion. On the other hand, the above discussion together
with Lemma \ref{XaX0} implies that $(X_0, \omega^{\alpha}_0)$
is symplectomorphic to $F_2(\alpha)$ and $(X_a, \omega^{\alpha}_a)$ is
symplectomorphic to
$S^2(1-\alpha) \times S^2(1+\alpha)$ for $a\neq 0$.
Combination of these now proves the statement of the proposition for $0 < \alpha < 1$.

Now we consider the case $\alpha = 0$.
We write $\widetilde{p}:= p_1 \circ \pi: \mathcal{X} \to \C$.
Denote by $\widetilde{\mathcal{X}}$ the image of $\mathcal{X}$ by
$ \widetilde{p} \times \Phi$.
The projection $\widetilde{p}$ descends to $p:\widetilde{\mathcal{X}} \to \C$.
The $(-2)$-section $C_{-2}$ of $X_0$ is contracted to the $A_1$-singularity by
$\Phi$.  Other $X_a$ are mapped to their images
biholomorphically.  Therefore, $p:\widetilde{\mathcal{X}} \to \C$ gives a smoothing
of the $A_1$-singularity.  Hence $X_a$ is isomorphic to $\widehat{F}_2(0)$.
For $a \neq 0$, the restriction $\Omega^{\alpha=0}|_{X_a} = \omega^0_a$ is a K\"ahler form,
which represents a cohomology class proportional to
$$
PD[\C P ^1 \times \{ pt \}] + PD [\{ pt \} \times \C P^1].
$$
Hence we obtain the statement for $\alpha = 0$.
\end{proof}
Using Proposition \ref{hireqchokuseki}
we describe our family of Lagrangian submanifolds as follows.
\par
Equip $\mathcal{X}$ with $\Omega^{\alpha}$, $\alpha > 0$.  Then the central fiber
$X_0$ is $F_2(\alpha)$ as a K\"ahler manifold.  We now describe its toric structure.
Consider the K\"ahler $S^1$-action on $\C P^1$.  Denote by  $\{ D_s \}$ the domain
bounded by an $S^1$-orbit $C_s$ such that the orientation as the boundary of $D_s$
is the same as the direction by the $S^1$-action and the area of $D_s$ is $s/2$ of the
area of $\C P^1$.
In particular,  $C_1$ bounds two discs in $\C P^1$ of equal area.
We lift this action holomorphically to
$$
\mathcal{V}_0 \cong \mathcal{O}(-1) \oplus \mathcal{O}(1) \to \{0\} \times \C P^1.
$$
Such a lift is not unique, but unique up to the $S^1$-action obtained by the multiplication
of $e^{ik\theta}$ for some integer $k$.
The lifted $S^1$-action and the action of $S^1 \times S^1$ by fiberwise multiplication
commute and give a holomorphic $T^3$-action, which induces
the $T^2$-action on $X_0$.  Here $T^2$ is the quotient of $T^3$ by the diagonal
subgroup in the second and third factors.
This $T^2$-action on $(X_0, J^0)$ is isomorphic to the $T^2$-action on $F_2(\alpha)$
with the standard complex structure as a toric manifold.
In particular, $J^0$ is isomorphic to $T^2$-invariant complex structure on the toric
Hirzebruch surface $F_2$.
In this description,  the torus fibers $L(u_1,u_2) \subset F_2(\alpha)$ for
$(u_1,u_2) \in \operatorname{Int} P(\alpha )$ are identified with the $T^2$-orbits.
We find that
$L(u_1,u_2) \subset (p_2 \circ \pi|_{X_0})^{-1}{C_{u_1/(1-u_2)}} \subset X_0$.
In particular, when $u_1 + u_2 =1$, it is contained in $(p_2 \circ \pi|_{X_0})^{-1}{C_1}$.
\par
Next, we consider a hypersurface $\SS=\pi^{-1}(\R \times \C P^1)$
and the characteristic foliation
$\operatorname{Char}_{\Omega^{\alpha}}(\SS)$.
\begin{defn}\label{psietc}
For $a \in \R$ and $\alpha > 0$,
the map $\psi_a^{\alpha}: X_0 \to X_a$ is
the symplectomorphism induced by integration of the characteristic
foliation $\operatorname{Char}_{\Omega^{\alpha}}(\SS)$.
When $\alpha =0$, we obtain an embedding $\psi_a^0 :X_0 \setminus C_{-2} \to
X_a$ in a similar way. We define the tori
\begin{equation}\label{Laalpha}
L_a^{\alpha}(u_1,u_2)=\psi_a^{\alpha} (L(u_1,u_2)) \subset (X_a,\omega^{\alpha}_a)
\end{equation}
which are Lagrangian with respect to $\omega_a^{\alpha}$.
In particular, for $a \neq 0$, we obtain Lagrangian tori $L_a^0(u,1-u)=\psi_a^0(L(u,1-u))$ in
$(X_a, \omega_a^0)$.
\end{defn}

Here we explain how the construction above is related to the one of $\widehat{F}_2(0)$
given in section \ref{Deformation}.
Let $N$ be a tubular neighborhood of $C_{-2}$ in $X_0$ such that the characteristic
foliation on $\partial N$ is isomorphic to the fibers of $S^2/\pm 1 \to \C P^1$.
Denote by $W$ a tubular neighborhood of the vanishing cycle in $X_a$, which is
bounded by $\psi_a^0(\partial N)$.
Then we have $X_a= \psi_a^0(X_0 \setminus N) \cup W$.
Applying the symplectic cutting construction along the boundary of $W$, we obtain
a closed symplectic $4$-manifold $N$ containing $+2$-curve $S$ and the symplectic form is
exact on the complement of $S$.
Then the work of McDuff \cite{MD} implies that $N$ is symplectomorphic to the product of
two copies of $S^2$ of  the same area.
It implies that $W$ is symplectomorphic to a disc bundle of $T^* \bf{S} ^2$.
Therefore we find that $(X_a, \omega_a^0)$, $a \neq 0$, is symplectomorphic to
$\widehat{F}_2(0)$.

Pick a symplectomorphism $\varphi_a: (X_a, \omega_a^0) \to S^2(1) \times S^2(1)$.
Then the Lagrangian tori  $T(u)$ in Theorem \ref{maintheorem} are given by
\begin{equation}\label{Lu}
T(u)= \varphi_a (L_a^0(u,1-u)).
\end{equation}

We use the next result for the proof of Theorem  \ref{hirzthem}.
\begin{theorem}\label{critcoincides}
The critical values of the potential function of $S^2(1-\alpha) \times S^2(1+\alpha)$
are equal to those of $F_2(\alpha)$. In fact, they are equal to the
eigenvalues of the quantum multiplication of the first Chern class.
\end{theorem}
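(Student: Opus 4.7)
The plan is to establish, for each of $X = S^2(1-\alpha) \times S^2(1+\alpha)$ and $X = F_2(\alpha)$ independently, that the set of critical values of the potential function $\frak{PO}_X$ coincides with the spectrum of quantum multiplication by $c_1$ on $QH^*(X;\Lambda)$. Granting this, the first assertion of the theorem follows immediately from Proposition \ref{hireqchokuseki}: since the two toric manifolds are symplectomorphic and small quantum cohomology is a symplectic invariant, the operators $c_1\ast$ on the two sides are conjugate and share the same spectrum, hence their sets of critical values coincide.

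For the Fano manifold $S^2(1-\alpha) \times S^2(1+\alpha)$ with its product toric structure, the full potential equals its leading order,
\[
\frak{PO}_{\rm prod} = T^{u_1}y_1 + T^{1-\alpha-u_1}y_1^{-1} + T^{u_2}y_2 + T^{1+\alpha-u_2}y_2^{-1},
\]
and classical Batyrev/Givental-type computations---equivalently, the Kodaira--Spencer (closed--open) map---give a ring isomorphism $QH^*(X;\Lambda) \cong \operatorname{Jac}(\frak{PO}_{\rm prod})$ under which $c_1$ is sent to the class $[\frak{PO}_{\rm prod}]$. General linear algebra then identifies the eigenvalues of multiplication by $c_1$ with the values of $\frak{PO}_{\rm prod}$ at its critical points. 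For the non-Fano $F_2(\alpha)$, one must instead use the \emph{full} potential $\frak{PO}$ of Proposition \ref{F2PO} (with the coefficient $c$ still unknown). The Kodaira--Spencer framework of \cite{toric1, toric2} extends to produce an analogous ring isomorphism $QH^*(F_2(\alpha);\Lambda) \cong \operatorname{Jac}(\frak{PO})$ with $c_1 \mapsto [\frak{PO}]$; the nef property of $F_2(\alpha)$ keeps the relevant disc enumeration Fano-like enough for the argument to go through (only classes $\beta_1 + k[D_1]$ contribute, as in the proof of Proposition \ref{F2PO}), and the critical values of $\frak{PO}$ again equal the spectrum of $c_1\ast$.

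The main obstacle is precisely the second bullet in the non-Fano case: establishing the Kodaira--Spencer isomorphism together with the identity $c_1 \mapsto [\frak{PO}]$ for $F_2(\alpha)$ when Fano positivity is unavailable and one must control higher-order $(-2)$-sphere corrections to $\frak{PO}$ using the full virtual machinery of the authors' toric series. Once this is in place, chaining the two identifications via Proposition \ref{hireqchokuseki} yields
\[
\{\text{crit.\ values of }\frak{PO}_{F_2(\alpha)}\} = \operatorname{Spec}(c_1\ast \text{ on } QH^*(F_2(\alpha))) = \operatorname{Spec}(c_1\ast \text{ on } QH^*(S^2\times S^2)) = \{\text{crit.\ values of }\frak{PO}_{\rm prod}\},
\]
which proves both assertions of Theorem \ref{critcoincides}.
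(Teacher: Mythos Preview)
Your proposal is correct and follows essentially the same approach as the paper: for each side one invokes the ring isomorphism $QH^*(X;\Lambda)\cong\operatorname{Jac}(\frak{PO})$ sending $c_1$ to $[\frak{PO}]$ (citing \cite{toric1} for the Fano product and the extension in \cite{toric1,toric3} for the nef $F_2(\alpha)$), and then chains the two spectra via the symplectomorphism of Proposition~\ref{hireqchokuseki}. The paper's proof is terser but structurally identical, and it likewise flags the non-Fano Kodaira--Spencer step as the nontrivial input, deferring the details to \cite{toric1} Remark~6.15 and \cite{toric3}.
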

\begin{proof}
The fact that the critical values of the potential function
are equal to the eigenvalues of the quantum multiplication of the first Chern class
is proved for the Fano toric manifolds in \cite{toric1} (See Remark 5.3 and Theorem 1.9 therein.
We note $S^2(1-\alpha) \times S^2(1+\alpha)$ is Fano.)
\par
Since $F_2(\alpha)$ is not Fano, we need to prove the corresponding fact separately.
We prove this by combining the following facts:
There exists an isomorphism $\varphi$ from the small quantum cohomology ring of $F_2(\alpha)$
to the Jacobian ring $\text{\rm Jac}(\frak{PO})$ of the potential function
$\frak{PO}$ of $F_2(\alpha)$.
Moreover $\varphi$ sends $c_1(F_2(\alpha))$ to the element $\frak{PO} \in \text{\rm Jac}(\frak{PO})$.
We can prove it by the argument of \cite{toric1} Remark 6.15.
(See \cite{toric3} for detail.)
\par
We have thus proved the second half of Theorem \ref{critcoincides}.
The first half follows from the second half and Proposition \ref{hireqchokuseki}.
\end{proof}
\par
A key idea of the proof of Theorem \ref{hirzthem} is our usage of Theorem \ref{critcoincides}
in our computation of the coefficient $c$.
Both $F_2(\alpha )$ and $S^2(1 -\alpha ) \times S^2(1 + \alpha )$ are
toric manifolds.   Hence the potential functions are defined for them.
Although $F_2(\alpha )$ and $S^2(1 -\alpha ) \times S^2(1 + \alpha )$ are not isomorphic
as toric K\"ahler manifolds, they are symplectomorphic. Therefore their quantum cohomologies
are isomorphic.  In particular, the eigenvalues of the quantum multiplication by the first
Chern class are the same.
By comparing the critical values of potential functions in the two pictures,
we will determine the coefficient $c$. The detail is now in order.
\par
The moment polytope of $S^2(1-\alpha) \times S^2(1+\alpha)$ is
$$
P'(\alpha) = \{(u_1,u_2) \mid 0 \le u_1 \le 1-\alpha, \,\,0 \le u_2 \le 1+\alpha \}.
$$
There is a unique balanced fiber over ${\mathbf u} = ((1-\alpha)/2,(1+\alpha)/2)$, where
the potential function is
$$
T^{(1-\alpha)/2}(y_1+y_1^{-1}) +  T^{(1+\alpha)/2}(y_2+y_2^{-1}).
$$
It has 4 critical points, $(y_1,y_2) = (\pm 1,\pm 1)$. Their associated critical values are
\begin{equation}\label{crival1}
\pm 2T^{(1-\alpha)/2}(1 \pm T^{\alpha}).
\end{equation}
\par\smallskip
On the other hand the balanced fiber of  $F_2(\alpha)$ is located at
$((1+\alpha)/2,(1-\alpha)/2)$, where the corresponding potential function is
\begin{equation}
\frak{PO}^u
=
T^{(1-\alpha)/2}(y_2 + (1+c)y_2^{-1}) +
T^{(1+\alpha)/2}(y_1 +y_1^{-1}y_2^{-2}).
\end{equation}
The condition for $(y_1,y_2)$ being critical is
\begin{eqnarray}
0 &=& 1 - y_1^{-2}y_2^{-2}.  \label{crit1}\\
0 &=& 1 -2 T^{\alpha}y_1^{-1}y_2^{-3} - (1+c)y_2^{-2}.\label{crit2}
\end{eqnarray}
The equation (\ref{crit1}) implies $y_1y_2 = \pm 1$.
\par\smallskip
\emph{Case 1}:  $y_1y_2 = -1$.  (\ref{crit2}) implies $y_2^2 = 1 + c - 2T^{\alpha}$.
Then the critical values are
\begin{equation}\label{crival2}
\pm 2 T^{(1-\alpha)/2} \sqrt{1+c - 2T^{\alpha}}.
\end{equation}
\par\smallskip
\emph{Case 2}:   $y_1y_2 = 1$.  (\ref{crit2}) implies $y_2^2 = 1 + c + 2T^{\alpha}$.
Then the critical values are
\begin{equation}\label{crival3}
\pm 2 T^{(1-\alpha)/2} \sqrt{1+c +2T^{\alpha}}.
\end{equation}
We have already found  four different solutions, while the number (counted with multiplicity) of critical points
is the sum of Betti numbers, which is four in this case.
Therefore we must have
$$
1 \pm T^{\alpha} = \sqrt{1+c \pm2T^{\alpha}}
$$
from which $c=T^{2\alpha}$ follows immediately. This finishes the proof of Theorem \ref{hirzthem}.
\qed\par\smallskip

\section{Proof of Theorem \ref{POsingptt}}\label{proof2}
We first insert $c = T^{2\alpha}$ obtained in Theorem \ref{hirzthem} into
the formula (\ref{POord2}) and obtain the potential function
\begin{equation}\label{POF2alpha}
\frak{PO} =\frak{PO}(y_1,y_2;u_1, u_2) =
T^{u_1}y_1 + T^{u_2}y_2 +T^{2-u_1-2u_2}y_1^{-1}y_2^{-2} +
(1+T^{2\alpha})T^{1-\alpha -u_2}y_2^{-1}
\end{equation}
for $F_2(\alpha)$ for any $0 < \alpha < 1$. As we pointed out before,
Theorem \ref{POsingptt} is obtained by
formally setting $\alpha = 0$ in \eqref{POF2alpha}.
\par
In this section, we give two different justifications of
this `formal insertion' to obtain the potential function of $\widehat F_2(0)$.
Both proofs use Theorem \ref{hirzthem}.
The first proof is based on the deformation family
$\widetilde{p}=p_1 \circ \pi: \mathcal{X} \to \C$
constructed in the proof of Proposition \ref{hireqchokuseki}.
The second proof uses the standard gluing argument. The latter may be applied to more general cases,
while the former uses a special feature of Hirzebruch surfaces.
\par
We remark that potential function $\frak{PO}$ of Lagrangian submanifold
as a $\Lambda_0$-valued function on $H^1(L;\Lambda_{0})$
is well-defined up to a change of coordinate
that is congruent to identity. (See Remark \ref{welldefrem}.)
In the case of toric fiber, we can use a $T^n$-equivariant
perturbation so that $\frak{PO}$ is strictly well-defined as
a function on $H^1(L;\Lambda_{0})$.
We emphasize that we are studying the case of Lagrangian submanifolds which is \emph{not}
of toric fiber.
\par
Precisely speaking, the first proof implies the following:
For each
$(u_1,u_2)$, there exists a tame almost complex structure $J_{u_1,u_2}$ on
$\widehat F_2(0)$ such that we have strict equality
(\ref{POsingpt}) for the potential function with respect to this $J_{u_1,u_2}$.
\par
The second proof implies the following:
For each $(u_1,u_2)$ and $E$ there exists a compatible
almost complex structure $J_E$ on $\widehat F_2(0)$ such that
\begin{equation}\label{POF2alphaTE}
\frak{PO}^{J_E}(y_1,y_2;u_1, u_2) \equiv
T^{u_1}y_1 + T^{u_2}y_2 +T^{2-u_1-2u_2}y_1^{-1}y_2^{-2} +
2T^{1 -u_2}y_2^{-1}
\mod T^E.
\end{equation}
holds for the potential function $\frak{PO}^{J_E}$ of $L(u_1,u_2)$
with respect to the almost complex structure $J_E$.
\par
Both the first and the second proofs imply the equality
(\ref{POsingpt}) modulo a coordinate change of $(y_1,y_2)$ congruent
to the identity modulo $\Lambda_+$. To prove Theorem
\ref{maintheorem}, this statement is sufficient.
\subsection{Proof I: Deformation method}

Let $\operatorname{Int}P = \bigcup_{\alpha>0}\operatorname{Int}P(\alpha)
= \{
(u_1,u_2) \in \R^2 \mid u_i > 0,
u_2 < 1, u_1+2u_2 < 2
\}$.
We put $\mathcal{X}_{\alpha}= (\mathcal{X},\Omega_{\alpha})$,
where $\alpha \in [0,1)$ and  $\mathcal{X} = \mathbb P(\mathcal V)$  is as
in section \ref{proof1}.
As a family of $C^{\infty}$-manifolds, we have a trivialization $\bigcup_{a \in \C} X_a \cong
\C \times (S^2 \times S^2)$.
When we specify the symplectic form $\omega_a^{\alpha}$ on $X_a$,
we write $X_a^{\alpha}=(X_a,\omega_a^{\alpha})$.
Since $\Phi|_{X_0}$ is equivariant under the torus action, we may assume that
$\omega_0^{\alpha}$, $\alpha > 0$ is a toric K\"ahler form on $X_0$.

The complement of the (+2)-section $C_{2}$ in $p:X_0 \to \C P^1$ is
biholomorphic to the total space of $H^{\otimes -2}$, where $H$ is
the hyperplane section bundle of $\C P^1$. We equip $H^{\otimes -2}$
with a hermitian metric such that the corresponding hermitian
connection satisfies the following condition. Denote by $\omega_{\C
P^1}$ the Fubini-Study form on $\C P^1$ with total area 1 and by
$\theta$ the hermitian connection form of $H^{\otimes -2}$. Then the
curvature of the connection $\theta$ is  a multiple of $\omega_{\C
P^1}$.

Recall that  we modified the Fubini-Study form on $\C P^4$ such that
it becomes flat in a neighborhood of $\Phi (O)$ and invariant under
$S^1 \times S^1$-action. Then we can express
$$\omega_0^{\alpha}=(1-\alpha) \left( \frac12 d(\rho (r)^2 \theta) + \frac{2\alpha}{1-\alpha} p^*\omega_{\C P^1} \right),$$
where $r$ is the fiberwise norm and $\rho:[0,\infty) \to [0, 1)$ is a strictly
increasing smooth function with $\rho (r)=r$ around $r=0$.
We will modify the function $\rho$ according to $(u_1,u_2)$ later.

Recall from \eqref{Omegaalpha}, \eqref{omegaalphaa} that we have
$$
\omega_0^{\alpha}=(1-\alpha) \left(\omega_0^0 +
\frac{2\alpha}{1-\alpha} (p_2 \circ \pi)^* \omega_{\C P^1} \right).
$$
Therefore we obtain
$$\omega_0^0 = \frac12 d(\rho (r)^2 \theta).$$
When we collapse the $(-2)$-curve $C_{-2}$ in $X_0$, we obtain $F_2(0)$
with an isolated singular point $O$.
Note that $X_0 \setminus (C_2 \cup C_{-2}) \cong F_2(0) \setminus (C_2 \cup \{O\})$
is biholomorphic to $(-\infty,\infty) \times S^3/\{\pm 1\}$ with a complex structure, which
is invariant under the translation in $(-\infty,\infty)$.
In the cylindrical coordinates,  we have
$$
\omega_0^0 = \frac{1}{2} d(e^{2 \sigma(s)} \theta),
$$
where $s$ is the coordinate on $(-\infty, \infty)$
and $\sigma(s)=\log \rho (e^s)$.   In particular,
$\sigma(s)=s$ for sufficiently small $s$.

\begin{lem}\label{diffeo}
For $(u_1,u_2) \in \operatorname{Int}P $, we consider the Lagrangian submanifold $L_a^{\alpha}(u_1, u_2)$
given in (\ref{Laalpha}). Then there is a constant $\delta(u_1,u_2)>0$ with the following properties:
\begin{enumerate}
\item
If $|a| < \delta(u_1,u_2)$, $\alpha < \delta(u_1,u_2)$, we have a diffeomorphism
$\phi_{a,(u_1,u_2),\alpha}: X_a \to X_a$ such that
$$L_a^{\alpha}(u_1, u_2)=\phi_{a,(u_1,u_2),\alpha}(L_a^{\alpha} (1/2,1/2)).$$
\item
There is a neighborhood $U(u_1,u_2)$
of $C_{-2}$ in $\bigcup_{a} X_{a}$ such that
$\phi_{a,(u_1,u_2),\alpha}$ is the identity map on $U(u_1,u_2) \cap X_a$.
Here we regard $C_{-2} \subset X_0 \subset \mathcal{X}_{0}$.
\item
There exists a smooth family of almost complex structures $J^{(1)}_{a;u_1,u_2}$
on $X_{a}$, which
is tamed both by $\omega^{\alpha}_a$ and $\phi_{a,(u_1,u_2),\alpha}^*\omega^{\alpha}_a$
for $|a|, \alpha < \delta(u_1,u_2)$.

\end{enumerate}
\end{lem}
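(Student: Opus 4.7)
The plan is to build $\phi_{a,(u_1,u_2),\alpha}$ by constructing a diffeomorphism on the central fiber $X_0$ and then transporting it to $X_a$ via the symplectic parallel transport $\psi_a^\alpha$ of Definition \ref{psietc}. All geometric content is thereby pushed onto the toric model $X_0 \cong F_2$, while smoothness in $(a,\alpha)$ is inherited from the characteristic flow.

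On $X_0$, use that $(u_1,u_2) \in \operatorname{Int} P$ forces $u_2 < 1$, so both $(1/2,1/2)$ and $(u_1,u_2)$ remain at a uniform positive distance from the facet $\{u_2 = 1-\alpha\}$ (which houses $C_{-2}$) for all sufficiently small $\alpha \ge 0$. Choose a smooth path $u(t) \in \operatorname{Int} P(\alpha)$ from $(1/2,1/2)$ to $(u_1,u_2)$ avoiding a fixed neighborhood of that facet, producing a Lagrangian isotopy $\{L(u(t))\}$; by the Lagrangian neighborhood theorem and a partition of unity, extend its infinitesimal generator to a time-dependent vector field on $X_0$ compactly supported in a tubular neighborhood of $\mu^{-1}(u([0,1]))$ disjoint from $C_{-2}$. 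Its time-one flow $\phi_0^{(\alpha)}$ sends $L(1/2,1/2)$ to $L(u_1,u_2)$, is the identity in a fixed neighborhood of $C_{-2}$, and depends smoothly on $\alpha \in [0,\delta)$; at $\alpha=0$ the degeneration of $\omega_0^0$ along $C_{-2}$ is irrelevant because the construction is confined to its complement. Setting
\[
\phi_{a,(u_1,u_2),\alpha} := \psi_a^\alpha \circ \phi_0^{(\alpha)} \circ (\psi_a^\alpha)^{-1}
\]
on the image of $\psi_a^\alpha$ and extending by the identity to a neighborhood of the vanishing cycle in $X_a$ yields (1), and (2) follows by taking $U(u_1,u_2)$ to be the union of $\psi_a^\alpha$-images of that fixed neighborhood of $C_{-2}$.

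The main obstacle is (3), because $\phi_{a,(u_1,u_2),\alpha}$ is typically not a symplectomorphism of $(X_a,\omega_a^\alpha)$: the Lagrangians $L_a^\alpha(u_1,u_2)$ for distinct $(u_1,u_2)$ enclose different symplectic areas on their basic disk classes and so cannot lie in a single orbit of the symplectomorphism group. However, $\omega_a^\alpha$ and $\phi^*\omega_a^\alpha$ agree outside the compact set $\psi_a^\alpha(\operatorname{supp}(\phi_0^{(\alpha)}))$ and, on that compact set, are cohomologous and joined by a smooth path of symplectic forms coming from the isotopy generating $\phi_0^{(\alpha)}$. I would choose $J^{(1)}_{a;u_1,u_2}$ smoothly by starting from the ambient integrable complex structure $J^a$ on $X_a$ --- which is tame for $\omega_a^\alpha$ --- and modifying it on the compact support of $\phi$ to land in the intersection of the $\omega_a^\alpha$- and $\phi^*\omega_a^\alpha$-tame cones; non-emptiness of this intersection on sufficiently small balls follows from openness of the tame condition under $C^0$-perturbations of the symplectic form, and standard partition-of-unity and contractibility arguments then yield a global common-tame family $J^{(1)}_{a;u_1,u_2}$ with smooth parameter dependence.
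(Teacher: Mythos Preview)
Your overall architecture matches the paper's: construct the diffeomorphism on the central fiber $X_0$ and conjugate by the characteristic-flow symplectomorphism $\psi_a^{\alpha}$, exactly as in the paper's formula $\phi_{a,(u_1,u_2),\alpha}=\psi_a^{\alpha}\circ\phi_{0,(u_1,u_2),\alpha}\circ(\psi_a^{\alpha})^{-1}$. Items (1) and (2) go through with your Lagrangian-isotopy construction.

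The gap is in (3). You assert that the $\omega_a^{\alpha}$-tame cone and the $\phi^*\omega_a^{\alpha}$-tame cone intersect by appealing to ``openness of the tame condition under $C^0$-perturbations of the symplectic form''. But $\phi^*\omega_a^{\alpha}$ is \emph{not} a $C^0$-small perturbation of $\omega_a^{\alpha}$: the two Lagrangian tori bound discs of different symplectic area, so $\phi$ is far from a symplectomorphism, and on a vector space two arbitrary symplectic forms need not share a tame almost complex structure (for instance, if $\omega+\omega'$ is degenerate there is none). The existence of a path $\omega_t$ between them, or contractibility of each individual tame cone, does not produce a single $J$ tamed by both endpoints. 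So neither openness nor partition-of-unity patching yields the conclusion without further input.

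The paper supplies exactly that missing input by building $\phi_{0,(u_1,u_2),0}$ from the holomorphic $(\C^*)^2$-action rather than from a generic Lagrangian isotopy. Since an element $g_{u_1,u_2}\in(\C^*)^2$ is $J_0$-biholomorphic, $J_0$ is automatically tamed by $g_{u_1,u_2}^*\omega_0^0$ wherever $\phi$ agrees with $g_{u_1,u_2}$. The only region where taming must be checked is the cut-off zone, and the paper places that zone in the cylindrical neck $(-\infty,0)\times S^3/\{\pm 1\}$, writes the cut-off explicitly in terms of the cylindrical coordinate $s$, and spreads it over an interval of length comparable to a large parameter $S$; the derivative of the cut-off is then $O(1/S)$, and a direct estimate shows $J_0$ remains $\phi^*\omega_0^0$-tame for $S$ large. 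Thus $J^{(1)}_{0;u_1,u_2}$ can be taken to be (an extension of) the integrable $J_0$ itself, and the passage to small $|a|,\alpha$ is then genuinely an openness argument. Your construction could be repaired along the same lines---use the toric $(\R_+)^2$-action to generate the isotopy and control the cut-off quantitatively---but as written the justification of (3) does not stand.
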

\begin{proof}
Firstly, we consider the case that $a=\alpha=0$.
Since  the $(\C^*)^2$-action induced by the toric
structure is transitive on the set of all torus fibers $L(u_1,u_2)$, we have a biholomorphic map $g_{u_1,u_2}$ such that
$g_{u_1,u_2}(L(1/2,1/2))=L(u_1,u_2)=L_0^{0}(u_1,u_2)$.
We modify $g_{u_1,u_2}$ to obtain $\phi_{0,(u_1,u_2),0}$ and find
$J^{(1)}_{0;u_1,u_2}$ so that items 1-3 of Lemma \ref{diffeo}
are also satisfied. Namely we have:
\begin{sublem}\label{mainsublem}
There exist a neighborhood $U \subset X_0$ of $C_{-2}$ depending only on $(u_1,u_2)$,
and a map $\phi_{0,(u_1,u_2),0} : X_0 \to X_0$,
and an almost complex structure $J^{(1)}_{0;u_1,u_2}$
such that the following holds:
\begin{enumerate}
\item
$L_0^{0}(u_1, u_2)=\phi_{0,(u_1,u_2),0}(L_0^{0} (1/2,1/2))$.
\item
$\phi_{0,(u_1,u_2),0}$ is an identity map on $U$.
\item
$J^{(1)}_{0;u_1,u_2}$
is tamed both by $\omega^{\alpha}_0$ and $\phi_{0,(u_1,u_2),0}^*\omega^{\alpha}_0$.
\end{enumerate}
\end{sublem}
\begin{proof}
We identify $(\C^2 \setminus \{0\})/\{\pm 1\}$ with $(-\infty, \infty) \times S^3/\{\pm 1\}$,
which is $(\C^*)^2$-equivariantly biholomorphic to $X_0 \setminus (C_2 \cup C_{-2})$.
From now on, we fix this identification.
Here $O \in \C^2/\{\pm 1\}$ corresponds to
the limit of $\{s\} \times S^3/\{\pm 1\}$ as $s$ tends to $-\infty$.
The complex torus $(\C^*)^2$ acts on $\C^2/\{\pm 1\}$ by
$$
(w_1,w_2) \cdot [z_1,z_2] = [w_1z_1,w_2z_2],
$$
where $(w_1,w_2) \in (\C^*)^2$ and $(z_1,z_2) \in \C ^2$.
Note that this map extends to a biholomorphic automorphism of $X_0$.
(The action by $(\C^*)^2/\{\pm 1\}$ is the torus action as a toric manifold.)

\par
For $(u_1,u_2)$, we pick $a < b \in \R$ so that
$$
L(u_1,u_2), \ L(1/2,1/2) \subset (a,b) \times S^3/\{\pm 1\} \subset X_0.
$$
We choose the function $\rho$ such that $\rho(s)=s$ on $(-\infty,b]$.
We set
$$
i(c_1,c_2) = (c_1c_2, c_1^{-1}c_2).
$$
Since $(\R_+)^2 \subset (\C^*)^2$ acts on the set of torus fibers $L(u_1,u_2)$ transitively,
there is $(c_1,c_2) \in (\R_+)^2$ such that

$$
i(c_1,c_2) \cdot L(1/2,1/2) = L(u_1,u_2).
$$
Thus we may identify  $g_{u_1,u_2}$ with the multiplication by
$i(c_1,c_2) \in (\C^*)^2$.
\par
Let $S$ be a sufficiently large positive number
to be chosen later.
We will modify the action by $i(c_1,1)$ in the region $(-\infty,-S) \times S^3/\{\pm 1\}$ so that it
becomes the identity on a neighborhood of $C_{-2}$.

Identifying $S^3/\{\pm 1\} = (\C^2 \setminus \{0\})/\R^*$, we denote
by $[[x_1,x_2]] \in S^3/\{\pm 1\}$ the equivalence class of $(x_1,x_2) \in \C^2 \setminus \{0\}$.
We define $\phi_1 : (-\infty, \infty) \times S^3/\{\pm 1\} \to (-\infty, \infty) \times S^3/\{\pm 1\}$
by
$$
\phi_1 (s,[[x_1,x_2]]))
= (s + h_S(s,[[x_1,x_2]]),[[c_1^{f_S(s)}x_1,c_1^{-f_S(s)}x_2]])
$$
where $f_S$ is a nondecreasing function such that
$$
f_S(s) =
\begin{cases}
0  & s < -3S+1, \\
1 & s > -2S
\end{cases}
$$
and
$$
h_S(s,[[x_1,x_2]])=\frac12 \{ \log (|c_1^{f_S(s)} x_1|^2+ |c_1^{-f_S(s)} x_2|^2)
- \log (|x_1| ^2+ |x_2|^2) \}.
$$
Note that $h_S$ is a bounded function.
We next extend $\phi_1$ to a map $X_0 \to X_0$ as follows.
$X_0 \setminus \{ (-3S,-S) \times S^3/\{\pm 1\} \}$ has two connected components $V_1$, $V_2$ such that $C_{-2} \subset V_1$.
We define
$$
\phi_1 = \begin{cases} id \quad & \text{on $V_1$}\\
i(c_1,1) \quad & \text{on $V_2$}.
\end{cases}
$$
Then if we take $f_S$ so that $| df_S/ds | < 2/S$, it is easy to see that for sufficiently large $S$
(depending only on $c_1,c_2$), $\phi_1$ is a diffeomorphism of $X_0$ and $J_0$ is tamed by $\phi_1^*\omega_0$.
We also choose $S$ large enough such that $- S <  a - \max |h_S| - 1$.
\par
We next define $(-\infty, \infty) \times S^3/\{\pm 1\} \to (-\infty, \infty) \times S^3/\{\pm 1\}$ by
$$
\phi_2(s,[[x_1,x_2]]) = (\chi_S(s), [[x_1,x_2]]).
$$
Here $\chi_S:(-\infty,\infty) \to (-\infty, \infty)$ is a strictly increasing smooth function
such that
$\chi_S(s)=s$ for $s < a-S$ and $\chi_S(s)=s+ \log c_2$ for $s \geq a$.
Using the fact that $d\chi_S/ds > 0$, we find that $J_0$ is tamed by $\phi_2^{*}\omega_0$.
Note also that $\phi_2$ coincides with the action by $i(1,c_2)$ in the region
$[a, \infty) \times /\{ \pm 1\}$.  In particular, $\phi_2$ extends to $C_2$ smoothly.
\par
We remark that $\phi_2$ is the identity map on the image of $(-\infty,-S) \times S^3/\{\pm 1\}$ by
$\phi_1$ and that $\phi_1$ is holomorphic on $(-2S, \infty) \times S^3/\{\pm 1\}$.
Therefore $J_0$ is tamed by $(\phi_2\phi_1)^{*}\omega_0$.
\par
Since $\phi=\phi_2\phi_1$ is the action of $(c_1,c_2)$ on $X_0 \setminus (C_{-2} \cup
(-\infty, -2S) \times S^3/\{\pm 1\}$, it follows that
$J^{(1)}_{0;u_1,u_2}$ is tamed by $\phi^{*}\omega_0$.
\par
By definition $\phi(L(1/2,1/2)) = L(u_1,u_2)$.
Since $\phi$ is the identity map near $C_{-2}$ and biholomorphic near $C_2$,  $\phi$, resp. $\phi_*J$  extend to a diffeomorphism  $\phi_{0,(u_1,u_2),0}$
of $X$,
resp. a complex structure $J_{0;u_1,u_2}$, which have the required properties.
\end{proof}
We next consider the case $(a,\alpha) \neq (0,0)$.
We set
\begin{equation}\label{phidef}
\phi_{a,(u_1,u_2),\alpha}=\psi_a^{\alpha} \circ \phi_{0,(u_1,u_2),\alpha} \circ (\psi_a^{\alpha})^{-1}.
\end{equation}
(See Definition \ref{psietc}.)
Note that $\phi_{a,(u_1,u_2),\alpha}$ is the identity map on $\psi_a^{\alpha}(U)$, which is a neighborhood of the vanishing cycle.
Therefore $\phi_{a,(u_1,u_2),\alpha}$ is defined as a diffeomorphism on $X_a$.
We have $L_a^{\alpha}(u_1, u_2)= \phi_{a,(u_1,u_2),\alpha}(L_a^0 (1/2,1/2))$.
\par
Fix $(u_1,u_2) \in \operatorname{Int}P$.
We extend $J_{0;u_1,u_2}^{(1)}$ to a smooth family
of almost complex structures  $J_{a;u_1,u_2}^{(1)}$ on $X_{a}$ such that
$J^{(1)}_{a;u_1,u_2}=J^a$ on $\psi_a^{\alpha}(U)$ for $|a|, \alpha < \delta$.
Here we pick and fix a positive constant $\delta$.

\par
Pick an open subset $U'$ such that $C_{-2} \subset U' \subset \overline{U}' \subset U$.
The condition that $J_{a;u_1,u_2}^{(1)}$ is tamed both by $\omega^{\alpha}_a$ and
$(\psi_a^{\alpha} \circ \phi_{0,(u_1,u_2),\alpha} \circ (\psi_a^{0})^{-1})^* \omega^{\alpha}_a$
on $X_0 \setminus U'$ is an open condition for $a, \alpha$.  ($\omega_0^0$ degenerates
along $C_{-2}$.)
Note also that the above taming condition holds on $X_0 \setminus U$ at $a=\alpha =0$
and $J^a$ is compatible with $\omega_a^{\alpha}$ for all $\alpha \geq 0$ when $a \neq 0$.
Therefore we find $\delta(u_1,u_2)$ enjoying the required two properties.
\end{proof}

We identify $(X_0,\omega_a^\alpha)$ with the toric $F_2(\alpha)$ by a
fixed symplectomorphism.
Consider $L(1/2,1/2) = L_0^{\alpha}(1/2,1/2)
\subset F_2(\alpha) = X_0$.
For $a \in \R \setminus \{0\}$,
$$
L_a^0(1/2,1/2)=\psi_a^0 (L(1/2,1/2)) \subset X_{a,0}
$$
is a Lagrangian submanifold with respect to $\omega^{0}_a$.
In particular, it is a monotone Lagrangian submanifold with respect to
$\omega^0_a$.
The ambient space is also monotone.
Thus we find that the Maslov index of any non constant $J$-holomorphic disc with boundary on
the Lagrangian torus $L_a^0(1/2,1/2)$ is at least $2$.
Here $J$ is {\it any} almost complex structure tamed by $\omega_a^0$.
Namely Assumption \ref{Maslov 2} is satisfied for such an almost complex structure $J$.
\par
Using Theorem \ref{POMaslov 2} in Appendix
and the above remark, we find that the conclusion of Theorem \ref{hirzthem} holds for any such $J$
and $L(1/2,1/2)$.
(Theorem \ref{hirzthem} itself is the case of toric complex structure.)
In particular it holds for $J^{(1)}_{a;u_1,u_2}$.

\par
We put
$$
J^{(2)}_{a,\alpha;u_1,u_2} = (\phi_{a,(u_1,u_2),\alpha})_*J^{(1)}_{a;u_1,u_2}.
$$
$J^{(2)}_{a,\alpha;u_1,u_2}$ is tamed by $\omega_a^{\alpha}$ for sufficiently small $\alpha, a$.
\par
Clearly the moduli space of $J^{(2)}_{a,\alpha;u_1,u_2}$-holomorphic discs
with boundary on the torus $L_a^{\alpha}(u_1,u_2)$
is identified with the moduli space of $J^{(1)}_{a;u_1,u_2}$-holomorphic
discs which bound $L_a^{\alpha}(1/2,1/2)$. (Lemma \ref{diffeo}).
\par

Hence, for any $\beta$ with $\mu (\beta) = 2$ and $\alpha$, $a_1, \, a_2$ with $\alpha, |a_1|, |a_2| < \delta(u_1,u_2)$, we have
a cobordism between
$$
\MM(L_{a_1}^{\alpha}(u_1,u_2); J^{(2)}_{a_1,\alpha;u_1,u_2};\beta)
\cong \MM(L_0^{\alpha}(1/2,1/2);  J^{(1)}_{a_1;u_1,u_2};\beta)
$$
and
$$
\MM(L_{a_2}^{\alpha}(u_1,u_2),J^{(2)}_{a_2,\alpha;u_1,u_2};\beta)
\cong
\MM(L_0^{\alpha}(1/2,1/2);  J^{(1)}_{a_1;u_1,u_2};\beta).
$$

On the other hand, Gromov's compactness implies that when $\alpha >0$, for any given $E>0$
there is some $a_0 > 0$ such that
for $a \in (0,a_0)$, the potential functions of
$L_0^{\alpha}(u_1,u_2) \subset (X_0,\omega^{\alpha}_0,J^{(2)}_{0,\alpha;u_1,u_2})$ and
$L_a^{\alpha}(u_1,u_2) \subset (X_a, \omega^{\alpha}_a,J^{(2)}_{a,\alpha;u_1,u_2})$ coincide up to the order of $T^E$.
\par
Combining these facts, we conclude that the potential functions of
$L_0^{\alpha}(u_1,u_2) \subset (X_0,\omega^{\alpha}_0,J^{(2)}_{0,\alpha;u_1,u_2})$ and of
$L_a^{\alpha}(u_1,u_2) \subset (X_a, \omega^{\alpha}_a,J^{(2)}_{a,\alpha;u_1,u_2})$ coincide for $\alpha \le \delta(u_1,u_2)$
and {\it any} $a$ with $|a| < \delta(u_1,u_2)$.
Recall that Theorem \ref{hirzthem} gives the potential function of
$L_0^{\alpha}(u_1,u_2)$, hence we obtain the potential function of
$L_a^{\alpha}(u_1,u_2)$.
We put
$$
J_{u_1,u_2}^a = \lim_{\alpha \to 0}J^{(2)}_{a,\alpha;u_1,u_2}
= \lim_{\alpha \to 0}(\phi_{a,(u_1,u_2),\alpha})_*J^{(1)}_{a;u_1,u_2}.
$$
Since $\phi_{a,(u_1,u_2),\alpha}$ is the identity map on $\psi_a^{\alpha}(U)$,
$J^{(2)}_{a,\alpha;u_1,u_2}$ is independent of $\alpha$ around the vanishing cycle.
Therefore $J_{u_1,u_2}^a$ is well-defined.
Then we find that
$c_k=\deg [ev_0;\MM_1(X_a,L(u_1,u_2);J^a_{u_1,u_2};\beta_1+k\,[D_1]) \to L(u_1,u_2)].$
\par
The potential function of $L_a^{\alpha}(u_1,u_2)$ in $(X_a, \omega_a^{\alpha})$ depends
on $\alpha$ only through the exponents of $T$, i.e., $\omega_a^{\alpha}$-areas of
bordered stable maps.
Namely, $c_k$'s, which appear as coefficients in the potential function, does not depend
on $\alpha$.

When $a \neq 0$, we have a family of K\"ahler forms $\omega_a^{\alpha}$,
$\alpha \le \delta(u_1,u_2)$. (Namely the family $\omega_a^{\alpha}$ extends to $\alpha = 0$.)
Letting $\alpha \to 0$, we find that the potential function of
$L^{0}_a(u_1,u_2) \subset (X_a, \omega^0_a,J^a_{u_1,u_2})$ becomes
$$
\frak{PO}
= T^{u_1}y_1 + T^{u_2}y_2 +T^{2-u_1-2u_2}y_1^{-1}y_2^{-2} +
2T^{1 -u_2}y_2^{-1}.
$$
This is the conclusion of Theorem \ref{POsingptt}.
\qed

\subsection{Proof II: Gluing method}

Now we present the second proof.
Let $F_2(0)_0$ be the $F_2(0)$ minus the singular point.
We symplectically identify the end of $F_0(0)_0$ with the
semi-infinite cylinder
$$
((-\infty,0] \times S^3/\{\pm 1\},d(e^{s}\lambda))
$$
over the real projective space $\R P^3 = S^3/\{\pm 1\}$.
Here $\lambda$ is the standard contact form on $S^3/\{\pm 1\}$ and $S_0
\in \R$ is sufficiently large.
\par

\par
For each given ${\mathbf u} \in \text{\rm Int}P$, we consider the moduli space
\begin{equation*}
\aligned
\widetilde{\mathcal M}_{(1;1)}^\sharp(L({\mathbf u});F_2(0))  =  \{(v, z_-; z_0)) \mid \,
& z_-\in \text{Int } D^2, \,  z_0 \in \partial D^2, \\
&
v: (D^2 \setminus \{z_-\}, \partial D^2) \to (-\infty,0] \times S^1 \to F_2(0)_0  \\
& \mbox{ is a map with the following properties} \}:
\endaligned
\end{equation*}
\begin{enumerate}
\item $v$ is proper and pseudo-holomorphic.
\item There exists a loop $\gamma: S^1 \to S^3/\{\pm 1\}$ which is a simple
closed Reeb orbit and such that there exists $\tau_0 \in \R$ and $e^{it_0} \in S^1$ with
$$
\lim_{\tau\to-\infty} d(v(\tau,t), v_\gamma(\tau+\tau_0,t + t_0))  = 0
$$
where $z = e^{\tau + it}$ is an analytic coordinate such that $z=0$ at $z_-$,
$d$ is the cylindrical metric on $(-\infty, 0] \times S^3/\{\pm 1\}$
and $v_\gamma$ is a trivial cylinder in $(-\infty, 0] \times S^3/\{\pm 1\}$ defined by
$v_\gamma(\tau,t) = (\tau, \gamma(t))$.
\item $v(z) \in L({\mathbf u})$ for $z \in \partial D^2$.
\item We have
$$
\int v^*\omega \le 1-u_2+\delta
$$
where the constant $\delta$ is a fixed constant so small that the corresponding homotopy class
of such map $v$ is unique.
\end{enumerate}
We note that $PSL(2,\R)$ acts on
$\widetilde{\mathcal M}_{(1;1)}^\sharp(L({\mathbf u});F_2(0))$ by
$$
g\cdot (v, z_-, z_0) = (v\circ g^{-1}, g(z_-), g(z_0)).
$$
Then we define the moduli space $\mathcal M_{(1;1)}^\sharp(L({\mathbf u}); F_2(0))$
to be the quotient
$$
\mathcal M_{(1;1)}^\sharp (L({\mathbf u}); F_2(0))
= \widetilde{\mathcal M}_{(1;1)}^\sharp(L({\mathbf u}); F_2(0))/PSL(2,\R).
$$
\par\smallskip
We remark that the set $\widetilde{\mathcal R}_1(\lambda) : = \text{\rm Reeb}_1(S^3/\{\pm 1\})$
of (parameterized) Reeb orbits $\gamma$ given in (2) above
can be identified with the set of parameterized closed geodesic of $S^2$ with minimal length
and so it is diffeomorphic to $S^3/\{\pm 1\}$. Here the subscript `1' stands for the `minimal period'.
We take its quotient by $S^1$-action defined by changing the base point of the parametrization.
Let ${\mathcal R}_1(\lambda)$ be the corresponding quotient space, which is diffeomorphic to $S^2$.
\par
On the cylinder $\R \times S^3/\{ \pm 1\}$, we denote by $s$, resp. $\Theta$ the projection
to $\R$, resp. $S^3/\{ \pm 1\}$.  By abuse of notation, we also use $s$ and $\Theta$
on a subcylinder contained in $\R \times S^3/\{ \pm 1\}$.
There is an obvious asymptotic evaluation map
$$
ev^\sharp: \mathcal M_{(1;1)}^\sharp(L({\mathbf u}); F_2(0)) \to \mathcal R_1(\lambda) \cong S^2
$$
which assigns to $(v,z_-,z_0) \in \mathcal M_{(1;1)}^\sharp(L({\mathbf u}); F_2(0))$ its asymptotic Reeb orbit $\gamma$
defined by
$$
\gamma(t) = \lim_{\tau \to -\infty} \Theta \circ v(\tau/T, t/T)
$$
where $z = e^{\tau + it}$ is the analytic coordinates adapted to $z_-$
and $T$ is the period of the Reeb orbit $\gamma$. We note that the period $T$ is
determined by the asymptotic behavior of $v$ by the formula
$$
T = \lim_{\tau \to - \infty} \int (\Theta \circ v_\tau)^* \lambda
$$
where $v_\tau$ is the loop defined by $v_\tau(t) = v(\tau,t)$.
We have another evaluation map
\begin{equation}\label{evat1}
ev_0 : \mathcal M_{(1;1)}^\sharp (L({\mathbf u}); F_2(0)) \to L({\mathbf u})
\end{equation}
which assigns to $(v,z_-,z_0)$ the point $v(z_0) \in L({\mathbf u})$.
\par
It is by now well-known that there is an appropriate Fredholm theory
for the study of $\mathcal M_{(1,1)}^\sharp(L({\mathbf u}); F_2(0))$.
We omit explanation
of this Fredholm theory here just referring to
\cite{surgery} for detailed exposition given in a similar context.
See also \cite{hind}.
We can check that the moduli space
$\mathcal M_{(1,1)}^\sharp(L({\mathbf u}); F_2(0))$ has virtual dimension $2$
(See Remark \ref{dimrem}.) and minimality of energy implies
that it is compact and hence the mapping degree of (\ref{evat1}) is well-defined.
\par
We next resolve the singularity of $\C^2/\{\pm 1\}$ as we did for $F_2(0)$,
which gives rise to a $\C$-bundle over $\C P^1$ of degree $-2$.
Furthermore it carries a symplectic form that coincides with that of
$\C^2/\{\pm 1\}$ away from $[(0,0)]$ and is determined (up to symplectomorphism) by the area of
the zero section $\C P^1$. We denote by $2\pi(2\alpha)$ the symplectic area of $\C P^1$
and the resulting symplectic manifold by $X(\alpha)$. Furthermore $X(\alpha)$ carries
a natural $\C^*$-action induced by the one on $\C^2$.
\par
The end of $X(\alpha)$, $0 < \alpha < 1$, is symplectomorphic to the cylinder
$$
([0,\infty) \times S^3/\{\pm 1\},d(e^{s}\lambda)),
$$
where $s$ is the coordinate on $[0,\infty)$.
We consider the set of smooth maps
$
u : \C \to X(\alpha).
$
For the purpose of describing the gluing process precisely, we
identify $\C$ with $\C P^1 \setminus \{pt\}$, and so consider the
moduli space of maps
$$
w: \C  \to X(\alpha),
$$
with the following properties:
\begin{enumerate}
\item $w$ is proper and pseudo-holomorphic.
\item There exist $(\tau_0,t_0) \in \R \times S^1$ and $\gamma$ where $\gamma$ is
as in (2) in the definition of $\mathcal M_{(1,1)}^\sharp (L({\mathbf u}); F_2(0))$ such that
$$
\lim_{\tau\to+\infty}d(w(e^{\tau+\tau_0 + (t+t_0)i}), v_\gamma(\tau,t)) = 0.
$$
where $d$ is the cylindrical metric on the cylinder $([0,\infty) \times S^3/\{\pm 1\},d(e^{s}\lambda))$.
\end{enumerate}
We need to assume finiteness of an appropriate energy, more specifically the
Hofer energy \cite{hofer}, in addition. We omit the
precise formulation thereof because it is by now standard.
\par
We denote the asymptotic boundary of $X(\alpha)$ by $\partial_\infty X(\alpha)$ and
the relative (Moore) homology class
$$
[w] \in H_2(X(\alpha), \partial_\infty X(\alpha))
$$
of such $w$ is classified by its intersection number with the zero section $\C P^1 \subset X(\alpha)$.
Let $\tilde\beta$ be the class with intersection number $1$. Other classes
are $\tilde\beta + k\,[\C P^1]$. Let $\widetilde{\mathcal M}^\sharp_1(X(\alpha);k)$ be the moduli space of
such $w$ in homology class $[w] = \tilde\beta+ k\,[\C P^1]$.
\par
We take the quotient of the space $\widetilde{\mathcal M}^\sharp(X(\alpha);k)$ by
the $\text{\rm Aut}(\C)$-action  given by
$$
g\cdot w = w \circ g^{-1}
$$
and take its stable-map compactification. Denote the resulting compactified
moduli space by $\mathcal M^\sharp(X(\alpha);k)$.
There is an obvious asymptotic evaluation map
$$
ev^\sharp: \mathcal M^\sharp(X(\alpha);k) \to \mathcal R_1(\lambda)
$$
which assigns to $w$ an asymptotic Reeb orbit $\gamma$
given by
$$
\gamma(t) = \lim_{\tau \to \infty} \Theta \circ w(e^{(\tau+it)/T}),\quad
T = \lim_{\tau \to \infty} \int (\Theta \circ w_\tau)^*\lambda
$$
where $w_{\tau}(t) = w(e^{\tau+it})$.
\par\medskip
We next describe a family of almost complex structures we use.
\par
Let $\alpha$ and $S_0$ be positive numbers satisfying $e^{2S_0} \alpha < 1$.
We glue $F_2(0)_0 \setminus (-\infty,-S_0) \times S^3/\{\pm 1\}$
with $X(e^{2S_0}\alpha) \setminus [S_0,\infty) \times S^3/\{\pm 1\}$
along $\{-S_0\} \times S^3/\{\pm 1\}$ and
$\{S_0\} \times S^3/\{\pm 1\}$.
(We put the symplectic form $e^{-2S_0}d(e^{s}\lambda)$ on $X(e^{2S_0}\alpha)$.)
We then obtain $F_2(\alpha)$.
\par
This space contains the cylindrical region $[-S_0, S_0] \times S^3/\{\pm 1\}$.
Using this identification, we define a compatible
almost complex structure $J_{S_0}$, for each $S_0$,  on  $F_2(\alpha)$ so that
its restriction to $[-S_0, S_0] \times S^3/\{\pm 1\}$ is of product type and
its restriction to the other part is independent of $S_0$.
Then the zero section $\C P^1$ of $X(\alpha)$ becomes the $(-2)$-curve $D_1$ in
$F_2(\alpha)$.
\par
Now we consider the homology class of maps $(D^2,\partial D^2) \to
(F_2(\alpha),L({\mathbf u}))$.
Such a homology class is determined by the intersection numbers with irreducible components
of the toric divisor. We denote by the class $\beta_1$ the one that has intersection number 1 with
the $(-2)$-curve $D_1$ and 0 with all other irreducible components of
toric divisors. We consider the class $\beta= \beta_1 + k\,[D_1]$ and
let $\mathcal M_1(F_2(\alpha),L({\mathbf u});J_{S_0};\beta_1 + k\,[D_1])$ be the compactified moduli space of
stable maps  $(\Sigma,\partial \Sigma) \to (F_2(\alpha),L({\mathbf u}))$
of genus zero, in homology class $\beta_1 + k\,[D_1]$ and with one boundary marked point.
\par
We have the natural fiber product
$$
\mathcal M^\sharp(X(\alpha);k) \,\,{}_{ev^\sharp}\times_{ev^\sharp}
\mathcal M^\sharp_{(1,1)}(L({\mathbf u}); F_2(0))
$$
and the evaluation map
$$
ev_0 : \mathcal M^\sharp(X(\alpha);k) \,\,{}_{ev^\sharp}\times_{ev^\sharp} \mathcal M^\sharp_{(1,1)}(L({\mathbf u}); F_2(0)) \to L({\mathbf u})
$$
such that the following diagram
$$
\xymatrix{\mathcal M^\sharp(X(\alpha);k) \,\,{}_{ev^\sharp}\times_{ev^\sharp}
\mathcal M^\sharp_{(1,1)}(L({\mathbf u}); F_2(0))
\ar[rr]^(.6){\pi_2}\ar[dr]_{ev_0} & & \mathcal M^\sharp_{(1,1)}(L({\mathbf u}); F_2(0)) \ar[dl]^{ev_0} \\
&L({\mathbf u})&}
$$
commutes.
\begin{lem}\label{gluing}
For each $k,\, u$ and a constant $C> 0$ there exists $S_0(k,u,C)$ such that if $S_0 > S_0(k,u,C)$ and  $
C^{-1} \le e^{2S_0}\alpha\le C$ then
the virtual fundamental cycle of the evaluation map
$$
ev_0:\mathcal M^\sharp(X(e^{2S_0}\alpha);k) \,\,{}_{ev^\sharp}\times_{ev^\sharp} \mathcal M^\sharp_{(1,1)}(L({\mathbf u}); F_2(0)) \to L({\mathbf u})
$$
defines the same homology class in $L({\mathbf u})$ as that of
$$
ev_0 : \mathcal M_1(F_2(\alpha),L({\mathbf u});J_{S_0};\beta_1 + k\,[D_1]) \to L({\mathbf u}).
$$
\end{lem}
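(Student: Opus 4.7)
The plan is to realize the moduli space $\mathcal M_1(F_2(\alpha),L({\mathbf u});J_{S_0};\beta_1 + k\,[D_1])$ as an SFT-type neck-stretching limit whose broken configurations are parametrized precisely by the fiber product appearing on the left-hand side, and then invoke a standard gluing theorem in the Kuranishi setting to identify virtual fundamental chains.

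\textbf{Step 1 (compactness).} Fix a sequence $(S_n,\alpha_n)$ with $S_n\to \infty$ and $e^{2S_n}\alpha_n \in [C^{-1},C]$, and let $u_n$ be a sequence of $J_{S_n}$-holomorphic stable discs in $F_2(\alpha_n)$ with boundary on $L({\mathbf u})$ representing $\beta_1+k\,[D_1]$. Because the cylindrical region $[-S_n,S_n]\times S^3/\{\pm1\}$ is of product type and the energy $\omega(\beta_1+k\,[D_1])=2\pi((1-u_2)+2k\alpha_n)$ is uniformly bounded, the SFT-type compactness theorem in the Lagrangian boundary setting (see \cite{surgery} and \cite{hind} for an adapted exposition, based on \cite{hofer} and BEHWZ) yields a subsequential limit that is a holomorphic building with two levels: a proper holomorphic disc $v_\infty$ in $F_2(0)_0$ with boundary on $L({\mathbf u})$ and finitely many positive punctures asymptotic to Reeb orbits of $\lambda$, together with a collection of proper holomorphic planes in $X(\alpha')$ (with $\alpha' = e^{2S_\infty}\alpha_\infty$ the limiting size of the vanishing $(-2)$-curve) each having one negative puncture asymptotic to a Reeb orbit.

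\textbf{Step 2 (classification of broken configurations).} Using the homology constraint $[v_\infty]+\sum_j[w_j]=\beta_1+k\,[D_1]$, the minimal energy condition $\int v^*\omega \le 1-u_2+\delta$ in the definition of $\mathcal M^\sharp_{(1,1)}(L({\mathbf u});F_2(0))$, and the fact that the only simple Reeb orbits of $(S^3/\{\pm 1\},\theta_{\text{\rm can}})$ of minimal period are the geodesic lifts parametrized by $\mathcal R_1(\lambda)\cong S^2$, one shows that the building has exactly one $F_2(0)_0$-piece carrying a single puncture and exactly one plane in $X(\alpha')$; multi-covers and additional sphere/disc bubbles are excluded by the Maslov/Chern inequalities in the non-negative case together with the symplectic area constraint. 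Thus the limiting configuration is an element $(v_\infty,w_\infty)$ of the fiber product $\mathcal M^\sharp(X(\alpha');k)\,{}_{ev^\sharp}\!\times_{ev^\sharp}\! \mathcal M^\sharp_{(1,1)}(L({\mathbf u});F_2(0))$, and the boundary marked point $z_0$ records the same point of $L({\mathbf u})$ as $ev_0(u_n)$ in the limit.

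\textbf{Step 3 (gluing).} Conversely, for each $(v,w)$ in the fiber product the standard pregluing-plus-Newton-iteration argument, adapted to the Lagrangian/cylindrical-end setting (again as in \cite{surgery,hind}), produces for all sufficiently large $S_0$ with $e^{2S_0}\alpha\in[C^{-1},C]$ a genuine $J_{S_0}$-holomorphic disc $u_{S_0}(v,w)$ in $F_2(\alpha)$ in class $\beta_1+k\,[D_1]$ depending smoothly on gluing parameters $(\tau_0,t_0)\in\R\times S^1$ coming from the free $\R\times S^1$-symmetry of the neck. Combined with Step 2, this gives, for $S_0 > S_0(k,u,C)$, a Kuranishi-neighborhood-wise identification between $\mathcal M_1(F_2(\alpha),L({\mathbf u});J_{S_0};\beta_1+k[D_1])$ and the fiber product, under which $ev_0$ on the left corresponds to $ev_0\circ\pi_2$ on the right.

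\textbf{Step 4 (virtual fundamental classes).} To pass from this set-theoretic identification to equality of homology classes, we equip both sides with compatible Kuranishi structures. The standard fiber-product construction of Kuranishi structures under SFT gluing (used in our earlier work) shows that the obstruction bundles on the glued moduli space are canonically isomorphic to the direct sum of those on $\mathcal M^\sharp(X(\alpha');k)$ and on $\mathcal M^\sharp_{(1,1)}(L({\mathbf u});F_2(0))$, together with the matching condition at the Reeb orbit. Choosing multisections that are pulled back from the two factors, the evaluation-map virtual fundamental chains then coincide on the nose, which gives the stated equality in $H_*(L({\mathbf u}))$.

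\textbf{Main obstacle.} The principal difficulty is Step 4: carrying out the SFT gluing in the virtual/Kuranishi framework with a Lagrangian boundary condition and ensuring that the choice of multisections used to define the virtual fundamental chain on $\mathcal M_1(F_2(\alpha),L({\mathbf u});J_{S_0};\beta_1+k[D_1])$ is compatible with the product of multisections on the two factors of the fiber product. The analytic gluing itself is technical but by now standard; the delicate point is that one must simultaneously control the linearized operators on both levels and handle the potential appearance of multiply-covered Reeb orbits, which is what forces the energy bound in (4) of the definition of $\mathcal M^\sharp_{(1,1)}$ and the constraint $e^{2S_0}\alpha\in[C^{-1},C]$ in the statement.
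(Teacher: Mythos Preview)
Your proposal is correct and takes essentially the same approach as the paper: the paper's entire proof is the single sentence ``Using the fact that $\gamma$ is the Reeb orbit of smallest period, the lemma follows from a standard gluing result,'' with a reference to \cite{LR,surgery}. Your Steps~1--4 are precisely an unpacking of what that standard SFT gluing argument entails, and your Step~2 isolates exactly the minimal-period hypothesis the paper singles out as the only non-formal input.
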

Using the fact that $\gamma$ is the Reeb orbit of smallest period, the lemma follows from a
standard gluing result. (See \cite{LR,surgery}.)

\begin{rem}\label{dimrem}
We can prove the equality
$$
\dim \mathcal M^\#(X(\alpha),k) = \dim \mathcal M^\#_{(1,1)}(L({\mathbf u});F_2(0)) = 2
$$
as follows. (Here $\dim$ is the virtual dimension that is the dimension as the
space with Kuranishi structure.)
\par
Since $c_1 =0$ in $X(\alpha)$, it follows that  $\dim \mathcal M^\#(X(\alpha),k)$ is
independent of $k$. In case $k=0$, our moduli space $\mathcal M^\#(X(\alpha),0)$
consists of the fibers of the $\C$ vector bundle $X(\alpha) \to \C P^1$.
We also find that the linearization operator is surjective.
Therefore $\dim \mathcal M^\#(X(\alpha),k) = 2$.
\par
On the other hand we have $\dim \mathcal M^\#_1(F_2(\alpha),L({\mathbf u});\beta_1)) = 2$
(see \cite{cho-oh} Theorem 5.1).
Therefore we obtain $\dim \mathcal M^\#_{(1,1)}(L({\mathbf u});F_2(0)) = 2$ from the
(analytic) index sum formula which is a part of the (standard) gluing result,
Lemma \ref{gluing}.
\end{rem}

\begin{lem}\label{degreeforv1}
For each $k,\, {\mathbf u}, \,  C$, there exists $S_0(k,{\mathbf u},C)$ such that if
$S_0 >S_0(k,{\mathbf u},C)$,
$C^{-1} \le e^{2S_0}\alpha\le C$,  and $ke^{2S_0}\alpha\le \delta_0({\mathbf u})$ then
the mapping degree of the map
$$
ev_0 : \mathcal M_1(F_2(\alpha),L({\mathbf u});J_{S_0};\beta_1+k\,[D_1]) \to L({\mathbf u})
$$
is equal to  $c_k$. Here $\delta_0({\mathbf u})$ is a sufficiently small positive constant depending only on ${\mathbf u}$,
and the integer $c_k$ is as in Theorem \ref{hirzthem} for $k\ne 0$ and $c_0 = 1$.
\end{lem}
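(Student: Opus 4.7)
The plan is to derive the claimed equality from Lemma \ref{gluing} together with the deformation invariance of the evaluation-map degree, and to import the values of $c_k$ from the toric computation in Theorem \ref{hirzthem}.

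First, I would observe that the mapping degree of $ev_0$ on $\mathcal{M}_1(F_2(\alpha), L({\mathbf u}); J; \beta_1+k[D_1])$ depends only on the homotopy class of the $\omega_{F_2(\alpha)}$-tame almost complex structure $J$. Since both $J_{S_0}$ and the standard toric complex structure $J_{\mathrm{tor}}$ on $F_2(\alpha)$ lie in the contractible space of tame structures, I would interpolate them by a smooth family $\{J_t\}_{t\in[0,1]}$ of tame structures and form the parametrized moduli space
$$
\widetilde{\mathcal{M}} \;=\; \bigcup_{t \in [0,1]} \{t\}\times \mathcal{M}_1(F_2(\alpha), L({\mathbf u}); J_t; \beta_1 + k[D_1]),
$$
carrying a Kuranishi structure. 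Because $F_2(\alpha)$ is nef, every sphere class has non-negative Chern number and every disc class bounding $L({\mathbf u})$ has Maslov index at least $2$, with equality for $\beta_1+k[D_1]$. The codimension-one strata of $\widetilde{\mathcal{M}}$ therefore reduce to the endpoints $t = 0, 1$: any disc-sphere splitting must place the sphere in a Chern-number-zero class $k'[D_1]$, and a dimension count using the Kuranishi structure shows that the image of such a stratum in $L({\mathbf u})$ has codimension at least two. Hence $\widetilde{\mathcal{M}}$ is a Kuranishi cobordism of the sources of $ev_0$ at $t=0, 1$, yielding
$$
\deg[ev_0 \text{ with } J_{S_0}] \;=\; \deg[ev_0 \text{ with } J_{\mathrm{tor}}] \;=\; c_k,
$$
where the final equality is the definition \eqref{ckdegree} of $c_k$ combined with the explicit values $c_0 = 1$, $c_1 = 1$ and $c_k = 0$ for $k\ge 2$ provided by Theorem \ref{hirzthem}.

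The main technical obstacle is excluding codimension-one wall-crossing from sphere bubbles in multiples of the $(-2)$-curve $[D_1]$, which carry Chern number zero and can appear in continuous families. The hypothesis $k e^{2S_0}\alpha \le \delta_0({\mathbf u})$ is designed precisely to keep the total symplectic area $\omega(\beta_1 + k[D_1]) = 2\pi((1-\alpha-u_2)+2k\alpha)$ uniformly bounded, so that Gromov compactness applies uniformly along the homotopy $\{J_t\}$ and the admissible values of $k'$ are restricted enough that the sphere-bubble stratum has virtual codimension at least two after applying $ev_0$. This is also where Lemma \ref{gluing} enters essentially: it guarantees that the degree is already well-defined for the highly non-generic cylindrical structure $J_{S_0}$, so that the identification with $c_k$ via the homotopy above is meaningful. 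An alternative route would be to compute the degree of the fiber product $\mathcal{M}^\sharp(X(e^{2S_0}\alpha);k)\,{}_{ev^\sharp}\!\times_{ev^\sharp}\!\mathcal{M}^\sharp_{(1,1)}(L({\mathbf u}); F_2(0))$ directly by separately computing the degrees of the two asymptotic evaluations onto $\mathcal{R}_1(\lambda)$, but the deformation route above is shorter and more elementary.
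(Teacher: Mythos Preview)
Your outline has the right overall shape---interpolate between $J_{S_0}$ and the toric $J$, then invoke cobordism invariance of the degree---and this is indeed what the paper does. But there is a genuine gap in the way you justify the cobordism step.

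You assert that ``every disc class bounding $L({\mathbf u})$ has Maslov index at least $2$'' as a consequence of $F_2(\alpha)$ being nef. This is false as a topological statement: nefness controls Chern numbers of \emph{sphere} classes, not Maslov indices of relative classes. In $H_2(F_2(\alpha),L({\mathbf u});\Z)$ every class is an integer combination $\sum n_i\beta_i$ with Maslov index $2\sum n_i$, and there are plenty of classes with positive symplectic area and $\sum n_i\le 0$. What is true is that such classes carry no $J$-holomorphic representative \emph{for the toric $J$} (by Cho--Oh's positivity of intersections with toric divisors), but this says nothing about an arbitrary $J_t$ in your interpolation. Consequently, disc bubbling---not sphere bubbling---is the real codimension-one hazard, and you have not excluded it. Your second paragraph focuses on sphere bubbles in multiples of $[D_1]$, which is the wrong obstacle.

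The paper closes exactly this gap, and this is where the neck-stretching hypotheses are actually used. Rather than taking an arbitrary tame homotopy, the paper observes that $J$ and $J_{S_0}$ are $C^k$-close when $S_0$ is large (with the cylindrical metric on the neck), and joins them by a \emph{short} path $\{J_{S_0,t}\}$. It then proves by contradiction that, for $S_0$ large, every $J_{S_0,t}$-holomorphic disc of energy at most $\beta_1\cap[\omega]+\delta_0({\mathbf u})$ has positive Maslov index: a hypothetical sequence of counterexamples is run through a neck-stretching compactness argument, the limit on the $F_2(0)_0$-side is identified with an element of $\mathcal M^\sharp_{(1,1)}(L({\mathbf u});F_2(0))$, and triviality of $c_1$ on the neck and on $X(\alpha')$ forces the Maslov index of the original discs to equal $2$. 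This is precisely Assumption~\ref{Maslov 2} along the path, after which Theorem~\ref{POMaslov 2} gives the invariance of the degree. The energy bound coming from $k e^{2S_0}\alpha\le\delta_0({\mathbf u})$ is used to force the limit into $\mathcal M^\sharp_{(1,1)}$, not merely to get Gromov compactness. Finally, note that Lemma~\ref{gluing} is not used in this proof; the argument is a direct comparison with the toric picture.
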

\begin{proof}
Let $J$ be the complex structure of $F_2(\alpha)$ as a toric manifold.
It follows from \cite{cho-oh} section 7 and Theorem \ref{hirzthem} that
the mapping degree of the map
$$
ev_0 : \mathcal M_1(F_2(\alpha),L({\mathbf u});J;\beta_1+k\,[D_1]) \to L({\mathbf u})
$$
is $c_k$.
By our choice of $J_{S_0}$ the difference between $J$ and $J_{S_0}$ in
$C^k$ norm converges to zero as $S_0 \to \infty$.
(Here we equip the neck region $\cong [-S_0,S_0] \times S^{3}/\{\pm 1\}$
with a cylindrical metric and the other part with a metric independent of $S_0$ but depending only on
$e^{2S_0}\alpha$.)
We join $J$ with $J_{S_0}$ by a short path $\{J_{S_0,t}\}_{0 \leq t \leq 1}$.
\par
It suffices to show (see the proof of Theorem \ref{POMaslov 2}) that if $S_0$ is sufficiently large
then all the $J_{S_0,t}$-holomorphic discs bounding $L({\mathbf u})$ that have
energy $\le \beta_1\cap [\omega] + \delta_0({\mathbf u})$ have positive Maslov index.
We prove this by contradiction.
\par
Suppose to the contrary that we have $\alpha_i$, $S_i$ and
$v_i : (\Sigma_i,\partial \Sigma_i) \to (X(\alpha_i),L({\mathbf u}))$
such that
$S_i \to \infty$, $C^{-1} \le e^{2S_i}\alpha_i \le C$, $v_i$ is
$J_{S_i,t_i}$ holomorphic with energy $\le \beta_1\cap [\omega] + \delta_0({\mathbf u})$,
$v_i$ is non-constant and
the Maslov index of $[v_i]$ is non-positive. By choosing a subsequence,
we may assume $\lim_{i\to \infty} e^{2S_i}\alpha_i = \alpha'$ converges.
\par
We can use a compactness result such as the one in \cite{BEHWZ03}
(see also \cite{surgery}), and can take a subsequence with the following properties:
Consider $[-S_i, S_i] \times S^3/\{\pm 1\} \subset (F_2(\alpha_i),J_{S_i,t})$ and
cut $\Sigma_i$ along the dividing curve $v_i^{-1}(c_i \times S^3/\{\pm 1\})$
for a regular value  $c_i \in (0,1)$ of $s \circ v_i: \Sigma_i \to \R$.
($s : [-S_i,S_i] \times S^3/\{\pm 1\} \to \R$ is the projection to the first factor.)
Let $\Sigma_i^0$ be the part which is mapped to
$F_2(\alpha_i) \setminus \{(-\infty,c_i-S_i) \times S^3/\{\pm 1\} \cup C_{-2}\}$.
Then $v_i\vert_{\Sigma_i^0}$ converges in appropriate compact $C^{\infty}$
topology to a map $v_{\infty} : \Sigma_{\infty} \to F_2(0)_0$ where
$\Sigma_\infty$ conformally a disc with punctures.
\begin{rem}
We note that
$
\{c_i\} \times S^3/\{\pm 1\}
\subset [-S_i, S_i] \times S^3/\{\pm 1\} \subset F_2(\alpha_i)
$
is identified with $\{e^{c_i - S_i}\} \times S^3/\{\pm 1\}
\subset F_2(0)_0$ in the conical coordinate $r = e^s$. In fact we identified
$\{e^{-S_i}\} \times S^3/\{\pm 1\}
\subset F_2(0)_0$ with
$\{S_i\} \times S^3/\{\pm 1\}
\subset X(\alpha_i)$, which corresponds to $c_i = 0$, i.e.,
to $\{0\} \times S^3/\{\pm 1\} \subset F_2(\alpha_i)$.
\end{rem}
\par
We remark that image of the limit map $v_{\infty}$ can not be entirely contained
in the compact subset of $F_2(0)_0$.
This is because $F_0(0)_0$ does not contain
$J$-holomorphic disc with boundary on $L({\mathbf u})$ and of Maslov index non-positive.
\par
Then by choosing $\delta_0({\mathbf u})$ sufficiently small,
this implies that $\Sigma_{\infty}$ consists of one
component such that $\int_{\Sigma_{\infty}} v_{\infty}^*\omega = \beta \cap [\omega] = 1 - u_2$.
Namely it defines an element of $\mathcal M^\sharp_{(1,1)}(L({\mathbf u}); F_2(0))$.
\par
On the other hand, the first Chern classes of
$\R \times S^3/\{\pm 1\}$ and of $X(\alpha')$ are trivial.
Therefore the Maslov index of $v_i$ is $2$ for sufficiently large $i$,
a contradiction to the hypothesis that the Maslov index is non-positive.
\end{proof}
\begin{lem}\label{enddegrcount}
The mapping degree of (\ref{evat1}) is $1$.
\end{lem}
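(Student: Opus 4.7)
The plan is to reduce this degree computation to the case $k=0$ of Lemma \ref{degreeforv1} via the gluing formula of Lemma \ref{gluing}. Specialize Lemma \ref{gluing} to $k=0$: the virtual fundamental cycle of
$$
ev_0 : \mathcal M^\sharp(X(e^{2S_0}\alpha);0) \,\,{}_{ev^\sharp}\times_{ev^\sharp}
\mathcal M^\sharp_{(1,1)}(L({\mathbf u}); F_2(0)) \to L({\mathbf u})
$$
represents the same class as that of $ev_0: \mathcal M_1(F_2(\alpha),L({\mathbf u});J_{S_0};\beta_1) \to L({\mathbf u})$. By Lemma \ref{degreeforv1}, combined with the convention $c_0 = 1$ (which is the classical count of Cho--Oh for the basic Maslov index $2$ disc class $\beta_1$), the latter map has mapping degree $1$.

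Next I would pin down the first factor $\mathcal M^\sharp(X(\alpha);0)$. As explained in Remark \ref{dimrem}, holomorphic maps $\C \to X(\alpha)$ in the class $\tilde\beta$ with the prescribed asymptotic behavior are exactly the fibers of the $\C$-line bundle $X(\alpha) \to \C P^1$, and the linearization is surjective. After quotienting by $\text{\rm Aut}(\C)$ (which acts transitively on parametrizations of a given fiber), the moduli space is canonically identified with $\C P^1 \cong S^2$. Under this identification the asymptotic Reeb orbit of a fiber is precisely the circle in $S^3/\{\pm 1\}$ lying over its base point, so the asymptotic evaluation
$$
ev^\sharp: \mathcal M^\sharp(X(\alpha);0) \longrightarrow \mathcal R_1(\lambda) \cong S^2
$$
is a diffeomorphism, in particular of degree $1$.

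Consequently the projection $\pi_1$ from the fiber product to $\mathcal M^\sharp(X(\alpha);0)$ is a base change of a degree-$1$ map, and equivalently $\pi_2$ onto $\mathcal M^\sharp_{(1,1)}(L({\mathbf u}); F_2(0))$ has degree $1$ over a generic point. By the commutative diagram displayed just before Lemma \ref{gluing}, we therefore obtain
$$
\deg\bigl(ev_0: \mathcal M^\sharp_{(1,1)}(L({\mathbf u}); F_2(0)) \to L({\mathbf u})\bigr)
=\deg\bigl(ev_0: \mathcal M_1(F_2(\alpha),L({\mathbf u});J_{S_0};\beta_1) \to L({\mathbf u})\bigr)
= c_0 = 1.
$$
The one point that requires care is to check that the Kuranishi structures on the two sides of Lemma \ref{gluing} and on the fiber product are compatible enough that the degree of the boundary evaluation multiplies correctly under the fiber product. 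This is the main technical obstacle, but the required analytic input (the SFT-type compactness and gluing in \cite{LR,surgery}) has already been invoked in the proof of Lemma \ref{gluing}, so the argument is a routine extension.
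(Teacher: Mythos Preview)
Your proposal is correct and follows essentially the same route as the paper's proof: both specialize Lemma \ref{gluing} to $k=0$, invoke Lemma \ref{degreeforv1} with $c_0=1$ to get degree $1$ for $ev_0$ on $\mathcal M_1(F_2(\alpha),L({\mathbf u});J_{S_0};\beta_1)$, and use the identification of $\mathcal M^\sharp(X(\alpha);0)$ with the fibers of $X(\alpha)\to\C P^1$ to see that $ev^\sharp$ is a diffeomorphism, so that the fiber product reduces to $\mathcal M^\sharp_{(1,1)}(L({\mathbf u});F_2(0))$ itself. The paper's argument is terser and leaves the last step implicit, while you spell it out via the commutative diagram; no substantive difference.
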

\begin{proof}
We consider $\mathcal M^\sharp(X(e^{2S_0}\alpha);0)$. It is easy to see that they consist
of fibers of the $\C$ vector bundle $X(e^{2S_0}\alpha) \to \C P^1$. Therefore
$ev^\sharp : \mathcal M^\sharp(X(e^{2S_0}\alpha);0) \to \mathcal R_1(\lambda)$ is a diffeomorphism.
It follows from Lemma \ref{degreeforv1}
that the degree $d_0$ of $ev_0 : \mathcal M_1(F_2(\alpha),L({\mathbf u});J_{S_0};\beta_1)
\to L({\mathbf u})$
is 1 if we choose $S_0$ sufficiently large. It then follows from Lemma \ref{gluing} that
\begin{equation} \text{\rm deg}\left[ev_0:\mathcal M^\sharp(X(e^{2S_0}\alpha);0) \,\,{}_{ev^\sharp}\times_{ev^\sharp}
\mathcal M^\sharp_{(1,1)}(L({\mathbf u});F_2(0)) \to L({\mathbf u})\right] \\
=  1.
\end{equation}
This proves Lemma \ref{enddegrcount}.
\end{proof}
\begin{lem}\label{53}
The degree of $ev^\sharp: \mathcal M^\sharp(X(\alpha);k) \to \mathcal R_1(\lambda)$ is $1$ if $k=0,1$ and is zero otherwise.
\end{lem}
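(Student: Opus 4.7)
The plan is to reduce the assertion to the values of the coefficients $c_k$ of the potential function via the gluing/fiber-product setup already built in Lemmas \ref{gluing}, \ref{degreeforv1} and \ref{enddegrcount}, and then invoke Theorem \ref{hirzthem}.

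First I would dispose of the case $k=0$ directly. As noted in Remark \ref{dimrem}, the moduli space $\mathcal M^\sharp(X(\alpha);0)$ is literally the family of fibers of the holomorphic line bundle $X(\alpha)\to \C P^1$ (each fiber, viewed as a proper $J$-holomorphic map $\C \to X(\alpha)$, is asymptotic to the Reeb orbit lying over its base point, and any such map in class $\tilde\beta$ must be such a fiber by positivity of intersection with the zero section). The asymptotic evaluation $ev^\sharp$ is then the map sending a fiber to the Reeb orbit sitting over its base point, which is a diffeomorphism $\mathcal M^\sharp(X(\alpha);0)\cong \C P^1 = \mathcal R_1(\lambda)$. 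Hence the degree is $1$.

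For $k\geq 1$ I would argue by a degree--multiplicativity of the fiber product appearing in Lemma \ref{gluing}. From the commuting diagram
\[
\xymatrix{\mathcal M^\sharp(X(\alpha);k) \times_{\mathcal R_1(\lambda)}
\mathcal M^\sharp_{(1,1)}(L({\mathbf u}); F_2(0))
\ar[r]^(.65){\pi_2} \ar[dr]_{ev_0} & \mathcal M^\sharp_{(1,1)}(L({\mathbf u}); F_2(0)) \ar[d]^{ev_0} \\
&L({\mathbf u})}
\]
one picks a regular value $p\in L({\mathbf u})$ of the composition. The signed count of $ev_0^{-1}(p)$ on the fiber product equals
\[
\sum_{v\in ev_0^{-1}(p)\subset \mathcal M^\sharp_{(1,1)}}
\deg\bigl(ev^\sharp:\mathcal M^\sharp(X(\alpha);k)\to \mathcal R_1(\lambda)\bigr),
\]
because the fiber of $\pi_2$ over $v$ is exactly $(ev^\sharp)^{-1}(ev^\sharp(v))$ inside $\mathcal M^\sharp(X(\alpha);k)$. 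Thus
\[
\deg\bigl(ev_0|_{\text{fiber product}}\bigr) \;=\; \deg\bigl(ev_0 : \mathcal M^\sharp_{(1,1)}(L({\mathbf u});F_2(0))\to L({\mathbf u})\bigr)\cdot \deg\bigl(ev^\sharp|_{\mathcal M^\sharp(X(\alpha);k)}\bigr).
\]
By Lemma \ref{enddegrcount} the first factor on the right is $1$. By Lemma \ref{gluing} together with Lemma \ref{degreeforv1}, the left hand side equals $c_k$ for suitable $S_0$ and $\alpha$ (chosen so that $ke^{2S_0}\alpha \le \delta_0({\mathbf u})$). Therefore $\deg(ev^\sharp) = c_k$. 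By Theorem \ref{hirzthem}, $c_1 = 1$ and $c_k=0$ for $k\geq 2$, which yields the claimed values. (Note: for $k=0$ this argument also gives $c_0\cdot 1 = 1$, consistent with the convention $c_0=1$.)

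The main subtle point I expect is justifying the degree-multiplication formula for the fiber product in the virtual/Kuranishi sense, since $\mathcal R_1(\lambda)$ need not be transverse to the evaluation maps and both factors are virtually defined. This is handled by choosing a perturbation of the Kuranishi structures compatible with the asymptotic evaluation maps so that $ev^\sharp$ becomes a submersion on each factor on an open neighborhood of a regular value, after which the local product structure of the fiber product justifies the degree-product identity. All other ingredients (compactness, correct virtual dimension) have been established in the preceding lemmas and remark.
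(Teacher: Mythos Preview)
Your argument is correct and follows the same route as the paper: use Lemma~\ref{gluing} to identify the degree of $ev_0$ on the fiber product with that of $ev_0$ on $\mathcal M_1(F_2(\alpha),L({\mathbf u});J_{S_0};\beta_1+k[D_1])$, invoke Lemma~\ref{degreeforv1} to make the latter equal to $c_k$, use Lemma~\ref{enddegrcount} to cancel the factor coming from $\mathcal M^\sharp_{(1,1)}$, and then read off $c_k$ from Theorem~\ref{hirzthem}. Your explicit treatment of the $k=0$ case and of the degree--multiplicativity for the fiber product is more detailed than the paper's, which leaves both implicit.

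Two small points you glossed over that the paper makes explicit. First, the simultaneous constraints $C^{-1}\le e^{2S_0}\alpha\le C$ (from Lemma~\ref{gluing}) and $ke^{2S_0}\alpha\le\delta_0({\mathbf u})$ (from Lemma~\ref{degreeforv1}) require a slightly careful order of choices: pick $C$ large so that $2kC^{-1}\le\delta_0({\mathbf u})$, then $S_0>S_0(k,{\mathbf u},C)$, then $\alpha$ with $C^{-1}\le e^{2S_0}\alpha\le 2C^{-1}$. Second, since this procedure produces a \emph{different} $\alpha$ for each $k$, one should note (as the paper does) that the almost complex structure on $X(\alpha)$ can be chosen independent of $\alpha\neq 0$, so that the degree of $ev^\sharp$ on $\mathcal M^\sharp(X(\alpha);k)$ does not depend on $\alpha$; otherwise the conclusion would only hold for a $k$-dependent $\alpha$, whereas the lemma is used downstream for a fixed $\alpha$ uniformly in $k$.
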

\begin{proof}
We choose an almost complex structure of $X(\alpha)$ so that it is independent of $\alpha
\ne 0$.
\par
We also remark that for given $k, u$ there exist $\alpha$, $C$ and $S_0$
such that the assumption of Lemma \ref{degreeforv1} is satisfied.
In fact we first choose $C$ such that $2kC^{-1} \le \delta_0({\mathbf u})$, next
$S_0 > S_0(k,{\mathbf u},C)$ and then finally we choose $\alpha$ so that
$C^{-1} \le e^{2S_0} \alpha \le 2C^{-1} \le C$. Then
$k e^{2S_0}\alpha \le 2C^{-1}k \le \delta_0({\mathbf u})$ as required.
\par
Now
Lemmas  \ref{degreeforv1} and \ref{enddegrcount} imply the degree of
$ev_0 : \mathcal M_1((F_2(\alpha),L({\mathbf u}));J_{S_0};\beta_1 + k\, D_1) \to
L({\mathbf u})$ is $c_k$.
Lemma \ref{53} now follows from Lemma \ref{gluing}.
\end{proof}
We next consider a simultaneous resolution of the family $F_2(0;\epsilon)$.
Here $F_2(0;\epsilon)$ is obtained from $F_2(0)$ by deforming singularity
as in section \ref{Deformation}.  Note it is independent of $\epsilon$ as a symplectic manifold.
Here we consider both symplectic and almost complex structures. The latter depends on $\epsilon$.
Existence of a simultaneous resolution implies the existence of the family
$F_2(\alpha;\epsilon)$ parameterized by $\alpha$ and $\epsilon$ such that
for $(\alpha,\epsilon) \ne (0,0)$ it is smooth,
$F_2(0;\epsilon)$ is as  above and $F_2(\alpha;0) = F_2(\alpha)$. We note that
$F_2(0;\epsilon)$ is symplectomorphic to the smoothing $\widehat F_2(0)$ of $F_2(0)$
introduced in section \ref{Deformation}. We denote by
$$
S^2_{van}
$$
a vanishing cycle of $F_2(0;\epsilon)$, which is isotoped to the $(-2)$-curve in $X_0$ and
shrinks to the singular point of $F_2(0)$ as $\e \to 0$.
\par
Similarly we construct 2-parameter family of local models
$X(\alpha;\epsilon)$. We may assume that $X(\alpha;\epsilon)$ coincides with $X(\alpha;0) = X(\alpha)$ outside
compact set. We use this fact to define $\mathcal M^\sharp(X(\alpha,\epsilon);k)$
in the same way as $\mathcal M^\sharp(X(\alpha);k)$.
\begin{lem}\label{degind}
The degree of $ev^\sharp: \mathcal M^\sharp(X(\alpha,\epsilon);k) \to \mathcal R_1(\lambda)$ is independent of $(\alpha,\epsilon)
\ne (0,0)$.
\end{lem}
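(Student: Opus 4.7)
The plan is to prove Lemma \ref{degind} by a parameterized cobordism argument, exploiting the connectedness of the punctured parameter space $B = \{(\alpha,\epsilon) : (\alpha,\epsilon) \ne (0,0)\}$ in a small neighborhood of the origin in $\R^2$. I will organize the total moduli space
$$
\mathcal M^\sharp_{\text{tot}} = \bigcup_{(\alpha,\epsilon) \in B} \{(\alpha,\epsilon)\} \times \mathcal M^\sharp(X(\alpha,\epsilon);k)
$$
with the projection $\pi: \mathcal M^\sharp_{\text{tot}} \to B$ and the extended evaluation $\widetilde{ev}^\sharp: \mathcal M^\sharp_{\text{tot}} \to \mathcal R_1(\lambda) \times B$. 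If I can equip $\mathcal M^\sharp_{\text{tot}}$ with a Kuranishi structure for which $\pi$ is proper and $\widetilde{ev}^\sharp$ has a well-defined fiberwise mapping degree, then that degree is locally constant on $B$; since $B$ is connected, it is constant, which is what the lemma asserts.

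The compactness step is the technical heart. Along a sequence $(\alpha_i,\epsilon_i) \to (\alpha_\infty,\epsilon_\infty) \in B$, the symplectic areas of representatives of the class $\tilde\beta + k\,[\C P^1]$ are uniformly bounded on compact subsets of $B$, so the SFT Gromov--Hofer compactness theorem (\cite{BEHWZ03,surgery}) produces a broken holomorphic building as a limit. I would rule out nontrivial breakings as follows. First, breaking at the cylindrical end $\R \times S^3/\{\pm 1\}$ is impossible because $\gamma$ is the Reeb orbit of minimal period in $(S^3/\{\pm 1\}, \lambda)$: any additional level in the symplectization must carry a positive puncture asymptotic to a Reeb orbit of period at least that of $\gamma$, and the standard action--period comparison forces such a level to be a trivial cylinder. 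Second, sphere bubbling in $X(\alpha,\epsilon)$ is controlled by the fact that $X(\alpha,\epsilon)$ is a crepant resolution of $\C^2/\{\pm 1\}$, so $c_1(X(\alpha,\epsilon)) = 0$; the virtual dimension of the moduli space of non-constant holomorphic spheres along a generic one-parameter family of almost complex structures is $2 c_1(\sigma) + 2(n-3) + 1 = -1$, so spheres do not appear generically. There is no Lagrangian boundary on our curves, so disc bubbling does not arise. Consequently $\pi$ is proper.

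Given compactness, a generic smooth path $(\alpha_t,\epsilon_t)_{t \in [0,1]}$ in $B$ pulls back $\mathcal M^\sharp_{\text{tot}}$ to a three-dimensional Kuranishi space whose codimension-one boundary is precisely the disjoint union of the fibers over the two endpoints, and which carries the evaluation to $\mathcal R_1(\lambda)$. The cobordism invariance of the mapping degree under Kuranishi bordism then gives $\deg ev^\sharp|_{(\alpha_0,\epsilon_0)} = \deg ev^\sharp|_{(\alpha_1,\epsilon_1)}$, which proves the lemma. The step I expect to be the main obstacle is organizing the parameterized Kuranishi structure uniformly across $B$, especially near the degenerate loci $\alpha = 0$ or $\epsilon = 0$ where the algebro-geometric model of $X(\alpha,\epsilon)$ changes character (the exceptional $(-2)$-curve appears or the vanishing cycle collapses). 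The relevant point is that the family $X(\alpha,\epsilon)$ remains smooth as a symplectic family on all of $B$, so after choosing the path to avoid the non-generic loci for sphere moduli, the transversality and compactness arguments apply uniformly.
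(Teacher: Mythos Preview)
Your proposal is correct and follows essentially the same approach as the paper's own proof, which consists of the single sentence: ``This can be proved by a standard cobordism argument using the fact that element of $\mathcal R_1(\lambda) \cong S^2$ represents a Reeb orbit with smallest action.'' You have unpacked precisely this argument, correctly identifying the minimal-period condition on the asymptotic Reeb orbit as the key input that rules out nontrivial SFT breaking and thereby makes the parameterized moduli space a cobordism between the fibers over any two points of the connected punctured parameter space $B$.
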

\begin{proof}
This can be proved by a standard cobordism argument using the fact that element of $\mathcal R_1(\lambda) \cong S^2$
represents a Reeb orbit with smallest action.
\end{proof}
\begin{cor}\label{55}
The degree of $ev^\sharp: \mathcal M^\sharp(X(0;\epsilon);k) \to \mathcal R_1(\lambda)$ is $1$ if $k=0,1$ and is 0 otherwise.
\end{cor}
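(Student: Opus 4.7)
The plan is to derive Corollary \ref{55} as a direct consequence of Lemma \ref{53} together with Lemma \ref{degind}. First I would invoke Lemma \ref{53}, which computes the degree of $ev^\sharp : \mathcal M^\sharp(X(\alpha);k) \to \mathcal R_1(\lambda)$ for $\alpha \neq 0$: this degree equals $1$ for $k=0,1$ and $0$ otherwise. Since $X(\alpha)$ is by definition the same object as $X(\alpha;0)$ in the two-parameter family $X(\alpha;\epsilon)$ constructed just above Lemma \ref{degind}, this determines the degree at every point of the form $(\alpha_0,0)$ with $\alpha_0 \neq 0$.

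Next I would use Lemma \ref{degind}, which asserts that the degree of $ev^\sharp$ is independent of $(\alpha,\epsilon)$ as long as $(\alpha,\epsilon) \neq (0,0)$. The complement of the origin in the $(\alpha,\epsilon)$-parameter space is path-connected, so one may join any point $(0,\epsilon_0)$ with $\epsilon_0 \neq 0$ to some $(\alpha_0,0)$ with $\alpha_0 \neq 0$ by a continuous path avoiding the origin. Along such a path, Lemma \ref{degind} forces the degree of $ev^\sharp: \mathcal M^\sharp(X(0;\epsilon_0);k) \to \mathcal R_1(\lambda)$ to coincide with the degree at $(\alpha_0,0)$, which was computed in the previous step. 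Combining the two yields the claim.

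The main conceptual point is already absorbed into Lemma \ref{degind}: one must ensure that no additional asymptotic ends bubble off and no index jumps occur as $(\alpha,\epsilon)$ varies through the punctured plane. This is guaranteed by the minimality of the action of Reeb orbits parametrized by $\mathcal R_1(\lambda) \cong S^2$, which prevents splitting of the cylindrical end into multiple covers or more complicated SFT configurations. Once Lemma \ref{degind} is in hand, the corollary requires no further analysis beyond noting path-connectedness of $\{(\alpha,\epsilon) \neq (0,0)\}$.
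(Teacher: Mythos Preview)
Your proposal is correct and follows exactly the paper's own approach: the paper's proof is the single sentence ``This is immediate from Lemmas \ref{degind} and \ref{53},'' and your argument simply unpacks that implication (with some additional commentary on path-connectedness and the role of minimal-action Reeb orbits that the paper leaves implicit).
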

This is immediate from Lemmas \ref{degind} and \ref{53}.
\begin{rem}
We remark that for $\epsilon =0$, $\alpha \ne 0$,  the elements of
$\mathcal M^\sharp(X(\alpha;0);0)$ are the fibers of the $\C$ bundle $X(\alpha;0) \to \C P^1$.
For $k\ne 0$ the moduli space $\mathcal M^\sharp(X(\alpha;0);k)$ consists of
those fibers together with sphere bubbles, which are (multiple cover of) the
zero section $\C P^1$. So it is nonempty for all $k \ge 0$. A direct counting of them
is rather cumbersome. We use Theorem \ref{hirzthem} and a
gluing argument to count them in Lemma \ref{53} and Corollary \ref{55}.
If $\epsilon \ne 0$ then there is no pseudo-holomorphic sphere in $X(0;\epsilon)$
since the symplectic form on $X(0;\epsilon)$ is exact.
Therefore $\mathcal M^\sharp(X(0;\epsilon);k)$, if non-empty, necessarily consists of
(proper) holomorphic maps from $\C$ without bubble.
\end{rem}
\par
We now need a slight modification of Lemma \ref{degreeforv1} to deal with
the moduli spaces associated with $(F_2(0;\epsilon),T(u))$ instead of $(F_2(\alpha),L(u,1-u))$.
\par
By gluing $X(0;\epsilon)$ with $F_2(0)_0$ in a similar way as before
we obtain $F_2(0;\epsilon)$ equipped with an almost complex
structures $J_{S_0,\epsilon}$ parameterized by $S_0$.
\par
$F_2(0;\epsilon)$ contains a cylindrical region $\cong [-S_0,  S_0] \times S^3/\{\pm 1\}$
on which $J_{S_0,\epsilon}$ is translation invariant. It also contains
$X(0;\epsilon) \setminus [S_0,\infty)\times S^3/\{\pm 1\}$ equipped with the symplectic form
$e^{-2S_0}\omega_{X(0,\epsilon)}$.
\par
Recall that when $L(u,1-u) \subset F_2(0)$ is considered as a submanifold in $\widehat{F}_2(0)
\cong F_2(0,\epsilon )$, we denote it by $T(u)$.
We define $\mathcal M_1(F_2(0;\epsilon),T(u);J_{S_0,\epsilon};\beta_1 + k\,  S^2_{van})$,
and $\mathcal M^{\#}(X(0;\epsilon);k)$  in the same way as
$\mathcal M_1(F_2(\alpha),L(u,1-u);\beta_1 + k\,\C P^1)$ and
$\mathcal M^{\#}(X(\alpha);k)$, respectively.

\begin{lem}\label{gluing5}
For each $k,\, u$ and $\epsilon$, there exists a constant $S_0(k,u,\epsilon)$ such that if $S_0 > S_0(k,u,\epsilon)$
the virtual fundamental cycle of the evaluation map
$$
ev_0:\mathcal M^{\sharp}(X(0;\epsilon);k) \,\,{}_{ev^\sharp}\times_{ev^\sharp} \mathcal M^\sharp_{(1,1)}(L(u,1-u); F_2(0)) \to L(u,1-u)
$$
defines the same homology class in $L(u,1-u) \cong T(u)$ as that of
$$
ev_0 : \mathcal M_1((F_2(0;\epsilon),T(u));J_{S_0,\epsilon};\beta_1 + k\, S^2_{van})\to T(u).
$$
\end{lem}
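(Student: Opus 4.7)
The plan is to follow the same strategy as in Lemma \ref{gluing}, replacing the resolution $X(\alpha)$ by the smoothing $X(0;\epsilon)$ in the cylindrical neck decomposition. The ambient $F_2(0;\epsilon)$ has been built by gluing $F_2(0)_0$ to $X(0;\epsilon)$ along a long cylindrical neck $[-S_0,S_0]\times S^3/\{\pm 1\}$, with $J_{S_0,\epsilon}$ translation-invariant there. So the proof reduces to two SFT-style inputs: compactness as the neck stretches, and a gluing theorem producing the inverse correspondence.

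For compactness, apply the theorem of \cite{BEHWZ03} (see also \cite{surgery}) to a sequence of $J_{S_i,\epsilon}$-holomorphic discs representing $\beta_1+k\,S^2_{van}$ with $S_i\to\infty$. The total area equals $\beta_1\cap[\omega] + k\,S^2_{van}\cap[\omega]$; by our choice of contact form, every Reeb orbit on $S^3/\{\pm 1\}$ with action below $1-u_2+\delta$ lies in the minimal-period family $\mathcal R_1(\lambda)$. Since the symplectic form on $X(0;\epsilon)\cong T^*\Sph^2$ is exact, holomorphic spheres in $X(0;\epsilon)$ are ruled out, so sphere bubbling in the smoothing side cannot occur and any sphere bubble in $F_2(0)_0$ is excluded by the area budget and the classification of disc classes, exactly as in Lemma \ref{degreeforv1}. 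The limit therefore consists of exactly one pair $(w,v)$ with $v\in \mathcal M^\sharp_{(1,1)}(L(u,1-u);F_2(0))$ and $w\in \mathcal M^\sharp(X(0;\epsilon);k)$ asymptotic to a common Reeb orbit $\gamma\in \mathcal R_1(\lambda)$, i.e.\ a point of the fiber product.

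Conversely, standard gluing (\cite{LR}, \cite{surgery}) produces from each such pair, for all sufficiently large $S_0$, a genuine $J_{S_0,\epsilon}$-holomorphic disc on $T(u)$ in class $\beta_1+k\,S^2_{van}$. The minimality of the period of $\gamma$ and the non-degeneracy of the asymptotic operator give the needed control on the linearized problem; the gluing is carried out on Kuranishi neighborhoods and, by construction, commutes with the evaluation $ev_0$ to $T(u)$. Combining the two directions identifies the moduli spaces as oriented Kuranishi spaces up to cobordism, so the pushed-forward virtual fundamental cycles define the same class in $H_*(T(u);\Q)$.

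The main obstacle is to confirm that the SFT building produced in the compactness step has exactly one breaking along a simple Reeb orbit and no intermediate trivial-cylinder levels or extra broken levels in $X(0;\epsilon)$. This is guaranteed by minimality of period together with the sharp energy bound, precisely as in Lemmas \ref{gluing} and \ref{degreeforv1}. A secondary point specific to the smoothing side is tracking the relative class $\beta_1 + k\,S^2_{van}\in H_2(F_2(0;\epsilon),T(u);\Z)$ through the neck degeneration: using the simultaneous resolution family $X(\alpha;\epsilon)$ constructed above, $S^2_{van}$ is identified with the image of $[\C P^1]\subset X(\alpha)$ under a class-preserving deformation, so the bookkeeping of $k$ matches that of Lemma \ref{gluing}. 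Beyond these points the argument is a direct adaptation of the earlier gluing lemma.
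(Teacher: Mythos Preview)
Your proposal is correct and follows essentially the same route as the paper: invoke the standard SFT-style compactness and gluing package of \cite{BEHWZ03}, \cite{LR}, \cite{surgery} along the stretched neck, exactly as in Lemma~\ref{gluing}. The paper's own proof is in fact much terser than yours---it simply observes that the argument of Lemma~\ref{gluing} goes through verbatim, the only difference being that the lower bound $C^{-1}\le e^{2S_0}\alpha$ required there (to keep the symplectic form on the inner piece non-degenerate as $\alpha\to 0$) is now unnecessary because the symplectic structure on the smoothing $X(0;\epsilon)$ is already non-degenerate. You do not make this point explicitly, but since you argue the compactness and gluing directly rather than by reduction to Lemma~\ref{gluing}, the issue never arises in your formulation. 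Your additional remarks about excluding extra SFT levels and sphere bubbles via minimality of the Reeb period and exactness on $X(0;\epsilon)$ are correct and in fact anticipate arguments the paper postpones to Proposition~\ref{56} and Lemma~\ref{nootherthingcontribute}.
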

\begin{proof}
We remark that in Lemma \ref{gluing} we need an assumption
$C^{-1} \le e^{2S_0}\alpha$ since the `symplectic structure' of
$X(0)$ is degenerate. On the other hand,
the symplectic structure of $X(0;\epsilon)$ is non-degenerate.
Once this fact is understood the proof is the same as Lemma \ref{gluing}
and is now standard.
\end{proof}
\par
Using Lemmas \ref{enddegrcount}, \ref{gluing5}  and Corollary \ref{55} we finally prove the
following:
\begin{prop}\label{56}
There exists $S_1(u,\epsilon)$ depending only on $u$ and $\epsilon$ such that if
$S_0 > S_1(u,\epsilon)$,
then the degree of $ev_0 : \mathcal M_1(F_2(0;\epsilon),T(u);J_{S_0,\epsilon};
\beta_1 + k\, S^2_{van}) \to T(u)$ is $1$ if $k=0,1$ and is $0$ otherwise.
\end{prop}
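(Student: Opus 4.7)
The plan is to combine the gluing identification in Lemma~\ref{gluing5} with the degree computations already carried out in Corollary~\ref{55} and Lemma~\ref{enddegrcount}. For each fixed $k$, once $S_0 > S_0(k,u,\epsilon)$, Lemma~\ref{gluing5} identifies the homology class of
\[
ev_0 : \mathcal M_1((F_2(0;\epsilon),T(u));J_{S_0,\epsilon};\beta_1 + k\, S^2_{van}) \to T(u)
\]
with that of $ev_0$ on the fiber product $\mathcal M^{\sharp}(X(0;\epsilon);k) \,{}_{ev^\sharp}\!\times_{ev^\sharp} \mathcal M^\sharp_{(1,1)}(L(u,1-u); F_2(0))$ under the natural identification $T(u) \cong L(u,1-u)$. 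It therefore suffices to compute the mapping degree of the latter.

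I would factor this fiber-product evaluation as the projection $\pi_2$ onto $\mathcal M^\sharp_{(1,1)}(L(u,1-u); F_2(0))$ followed by $ev_0 : \mathcal M^\sharp_{(1,1)} \to L(u,1-u)$. Since the fiber product is taken over $\mathcal R_1(\lambda)$ via the two asymptotic evaluations, and the virtual dimensions of $\mathcal M^{\sharp}(X(0;\epsilon);k)$ and $\mathcal R_1(\lambda)$ both equal $2$ by Remark~\ref{dimrem}, the virtual degree of $\pi_2$ coincides with $\deg(ev^\sharp : \mathcal M^\sharp(X(0;\epsilon);k) \to \mathcal R_1(\lambda))$, which by Corollary~\ref{55} is $1$ for $k \in \{0,1\}$ and $0$ for $k \ge 2$. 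Multiplying by $\deg(ev_0 : \mathcal M^\sharp_{(1,1)} \to L(u,1-u)) = 1$ from Lemma~\ref{enddegrcount} yields the required count of $1$ for $k=0,1$ and $0$ otherwise.

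To make the bound $S_1(u,\epsilon)$ independent of $k$ I would set $S_1(u,\epsilon) := \max\{S_0(0,u,\epsilon),\, S_0(1,u,\epsilon)\}$, so that the cases $k = 0, 1$ fall out directly from the preceding paragraphs. For $k \ge 2$ the gluing bound $S_0(k,u,\epsilon)$ may exceed $S_1(u,\epsilon)$, but the family $\{J_{S_0,\epsilon}\}_{S_0>0}$ is smooth in $S_0$, the class $\beta_1 + k\,S^2_{van}$ and the Lagrangian $T(u)$ are fixed, and the virtual dimension is constantly $2$ (since $c_1 \cdot (\beta_1 + k\,S^2_{van}) = 2$), so the mapping degree is a cobordism invariant in the parameter $S_0$. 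Vanishing of the degree at $S_0 > S_0(k,u,\epsilon)$ therefore propagates to all $S_0 > S_1(u,\epsilon)$.

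The main obstacle I anticipate is establishing this cobordism invariance cleanly in the $k \ge 2$ regime: one must rule out non-trivial disc or sphere bubbling on the boundary of the parametrized moduli space $\bigcup_{S_0 > S_1} \mathcal M_1((F_2(0;\epsilon),T(u));J_{S_0,\epsilon};\beta_1+kS^2_{van})$ that could alter the degree. Exactness of the symplectic form on $X(0;\epsilon)$ (which excludes holomorphic sphere bubbles in the local model), together with the minimality of the Reeb period in $\mathcal R_1(\lambda)$ exploited throughout Section~\ref{proof2}, should suffice; this is the one genuinely new verification needed beyond invoking Lemmas~\ref{gluing5} and \ref{enddegrcount} and Corollary~\ref{55}.
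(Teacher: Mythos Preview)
Your treatment of $k=0,1$ matches the paper's: combine Lemma~\ref{gluing5} with Corollary~\ref{55} and Lemma~\ref{enddegrcount}. The divergence is in how you handle the remaining $k$, and there your argument has a real gap.

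For the cobordism in $S_0$ to detect only the endpoint degrees, you need the one–parameter family $\bigcup_{S_0\in[S_1,S_0(k,u,\epsilon)]}\mathcal M_1(F_2(0;\epsilon),T(u);J_{S_0,\epsilon};\beta_1+k\,S^2_{van})$ to acquire no codimension--one boundary from disc bubbling. That amounts to knowing Assumption~\ref{Maslov 2} (no nonconstant disc of Maslov index $\le 0$) for \emph{every} $J_{S_0,\epsilon}$ with $S_0>S_1$. Your choice $S_1=\max\{S_0(0,u,\epsilon),S_0(1,u,\epsilon)\}$ carries no such information: those constants come from the gluing analysis, not from a Maslov--index bound. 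The remedy you sketch --- exactness of $\omega$ on $X(0;\epsilon)$ and minimality of the Reeb period --- rules out sphere bubbles in the \emph{local model}, but the dangerous phenomenon here is disc bubbling in the \emph{global} space $F_2(0;\epsilon)$, where the symplectic form is certainly not exact. Establishing the needed Maslov bound along the whole family requires its own neck--stretching compactness argument (an analogue of the proof of Lemma~\ref{degreeforv1}); without it the cobordism step is unjustified. You also omit the case $k<0$.

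The paper takes a different and cleaner route that sidesteps any cobordism in $S_0$. The key observation is that $\omega\cap[S^2_{van}]=0$, so the Hofer energy of elements of $\mathcal M^\sharp(X(0;\epsilon);k)$ is independent of $k$; Gromov--Hofer compactness then gives a bound $k(\epsilon)$ with $\mathcal M^\sharp(X(0;\epsilon);k)=\emptyset$ for $|k|>k(\epsilon)$. One then sets $S_1(u,\epsilon)$ to dominate the \emph{finitely many} gluing thresholds $S_0(k,u,\epsilon)$ for $|k|\le k(\epsilon)$, together with a further threshold (obtained by a single neck--stretching contradiction argument) beyond which $\mathcal M_1(F_2(0;\epsilon),T(u);J_{S_0,\epsilon};\beta_1+k\,S^2_{van})$ is actually \emph{empty} for all $|k|>k(\epsilon)$. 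Thus the paper proves emptiness rather than vanishing degree for large $|k|$, and never needs to verify Assumption~\ref{Maslov 2} along an interval of $S_0$--values.
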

\begin{proof}
We first show the following:
\begin{lem}\label{gibkempty}
There exists $k(\epsilon)$ such that  $\mathcal M^\sharp(X(0,\epsilon);k)
=\emptyset$ if $\vert k\vert > k(\epsilon)$.
\end{lem}
\begin{proof}
We remark $S^2_{van} \cap [\omega] = \alpha = 0$ in our case.
Therefore it follows that the Hofer energy of elements of
$$
\bigcup_{k} \mathcal M^\sharp(X(0,\epsilon);k)
$$
are independent of $k$ and constant (and so are bounded). The lemma now follows
from the Gromov-Hofer compactness.
\end{proof}
By Lemma \ref{gluing5}, the conclusion of Proposition \ref{56} follows
if  $\vert k\vert \le k(\epsilon)$ and
$$
S_0 \ge \max_{k; \vert k\vert \le k(\epsilon)}S_0(k,u,\epsilon).
$$
Therefore it is enough to prove that there exists a sufficiently large $S_0(u,\e) > 0$
such that
\begin{equation}\label{MMklarge}
\mathcal M_1(F_2(0;\epsilon),T(u);J_{S_0,\epsilon};\beta_1 + k\,S^2_{van}) = \emptyset
\end{equation}
for all $|k| > k(\e)$ if $S_0 > S_0(u,\e)$. We prove this statement by contradiction.
\par
Suppose to the contrary that there exists a sequence $S_i \to \infty$,
$k_i$ with  $\vert k_i\vert > k(\epsilon)$ and $(\Sigma_i, v_i,z_0) \in
\mathcal M_1(F_2(0;\epsilon),T(u);J_{S_i,\epsilon};\beta_1 + k_i\, S^2_{van})$.
\par
We first remark that $(\beta_1 + k\, S^2_{van}) \cap [\omega] = \beta_1 \cap [\omega]$
is independent of $k$. Therefore the symplectic energy of $v_i$ is uniformly bounded.
\par
We next consider $[-S_i, S_i] \times (S^3/\{\pm 1\}) \subset F_2(0;\epsilon)$.
We divide the domain of $v_i$ along $s = c_i$ for a regular value $c_i \in [S_i-1,S_i]$
in the same way as in the proof of Lemma \ref{gluing}. We consider the part that goes to
$X(0;\epsilon) \setminus (c_i,\infty) \times  S^3/\{\pm 1\}$,
which we denote by
$v_i^0 : \Sigma_i^0 \to X(0;\epsilon)$.
We claim that $(\Sigma_i^0,v_i^0)$ converges to an element of
$\mathcal M^\sharp(X(0,\epsilon);k)$ for some $k$ in compact $C^{\infty}$ topology,
after taking a subsequence.
\par
In fact we can use energy bound to find a subsequence such that
the restriction of $v_i$ to $v_i^{-1}( [S_i-1,S_i] \times S^3/\{\pm 1\})$
converges in $C^{\infty}$ topology. Therefore the action
$$
\int_{v_i^{-1}(\{c_i\} \times S^3/\{\pm 1\})} (\Theta\circ v_i)^*\lambda
$$
is uniformly bounded.
On the other hand, since $\alpha = 0$, the symplectic form $\omega$ is
exact on $X(0;\epsilon)$.
\par
Existence of a convergent subsequence of $(\Sigma^0_i,v_i^0)$
is then again a consequence of Gromov-Hofer compactness.
\par
Now by Lemma \ref{gibkempty} we have $\vert k_i\vert \le k(\epsilon)$
for sufficiently large $i$.
This contradicts to our assumption that $\vert k_i\vert > k(\epsilon)$.
\end{proof}
We next prove that only the moduli spaces for $k = 0, \, 1$ in Proposition \ref{56} contribute
to the potential function $\frak{PO}^{J_{S_0,\epsilon}}$.
Let $\beta_i \in H_2(F_2(\alpha),L(u_1,u_2);\Z)$ ($i=1,\dots,4$) be the
classes such that
$\mathcal M_1^{\text{\rm reg}}(F_2(\alpha),L(u_1,u_2);\beta_i) \ne \emptyset$
and $c_1 \cap \beta_i = 2$.
(Here `reg' means the moduli space of pseudo-holomorphic disks without
bubble.) \cite{cho-oh} Theorem 5.2 implies that there are exactly four homology classes satisfying this
condition. $\beta_1$ is the same class as before.
We denote by $\beta_i \in H_2(F_2(0;\epsilon),T(u);\Z)$ the class corresponding to them by
the obvious diffeomorphism $(F_2(0;\epsilon),T(u)) \cong (F_2(\alpha),L(u,1-u))$.
\par
\begin{lem}\label{nootherthingcontribute}
For each $E, \epsilon$ and $(u,1-u) \in \text{\rm Int}P(0)$ there exists $S_0(E,\epsilon,u)$
such that the following holds for $S_0 > S_0(E,\epsilon,u)$:
Suppose $\beta \in H_2(F_2(0;\epsilon),T(u);\Z)$ satisfies
\begin{subequations}\label{condforempty}
\begin{equation}
\beta \ne \beta_i, (i=1,2,3,4), \quad
\beta \ne \beta_1 + k\, S^2_{van}, k \in \Z,
\end{equation}
and
\begin{equation}
\beta\cap \omega \le E, \quad
\mu(\beta) = 2,
\end{equation}
\end{subequations}
where $\mu$ is the Maslov index. Then we have
$$
\mathcal M_1((F_2(0;\epsilon),T(u));J_{S_0,\epsilon};\beta) = \emptyset.
$$
\end{lem}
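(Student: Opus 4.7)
The plan is to argue by contradiction via a neck-stretching (SFT-type) compactness argument, in the spirit of the proof of Proposition \ref{56}. Suppose, contrary to the claim, that for some $E,\epsilon,(u,1-u)$ one has sequences $S_i \to \infty$, classes $\beta^{(i)} \in H_2(F_2(0;\epsilon),T(u);\Z)$ satisfying \eqref{condforempty}, and elements $v_i \in \mathcal{M}_1((F_2(0;\epsilon),T(u)); J_{S_i,\epsilon}; \beta^{(i)})$. The area bound $\beta^{(i)}\cap\omega \le E$ together with translation invariance of $J_{S_i,\epsilon}$ along the neck $[-S_i,S_i]\times S^3/\{\pm 1\}$ gives uniform Hofer energy control on the pieces obtained after cutting at a regular level of the neck coordinate (as in the proof of Proposition \ref{56}). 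I would then invoke Gromov--Hofer compactness in the form of \cite{BEHWZ03,surgery}, which was already used in Lemma \ref{gluing5}, to extract a subsequential limit SFT building.

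Next I would analyze that limit. It decomposes into a disc component $v_\infty^0$ mapping into $F_2(0)_0$ with boundary on $L(u,1-u)$ and finitely many punctures asymptotic to closed Reeb orbits of $(S^3/\{\pm 1\},\lambda)$, together with finitely many punctured sphere components $w_j$ landing in $X(0;\epsilon)$ sharing those Reeb-orbit asymptotes. Any intermediate cylindrical layer in $\R\times S^3/\{\pm 1\}$ must be a trivial cylinder, by minimality of the Reeb period and the energy cap. The existence of a positive action gap above the shortest Reeb orbit on $S^3/\{\pm 1\}$, combined with the uniform Hofer energy bound, forces every asymptote to lie in $\mathcal{R}_1(\lambda)$ and caps the number of punctures. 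In particular each $w_j$ defines an element of some $\mathcal{M}^\sharp(X(0;\epsilon);k_j)$.

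Then I would perform the Maslov--homology bookkeeping that pins down the possibilities. Because $c_1(X(0;\epsilon))=0$, no sphere component contributes to the Maslov index, so the full $\mu(\beta^{(i)})=2$ is carried by $v_\infty^0$. Combined with the classification of Maslov-$2$ holomorphic discs in \cite{cho-oh} Theorem 5.2 and its once-punctured analogue under the identification of $F_2(0)_0$ with the complement of the $(-2)$-divisor in $F_2(\alpha)$, the only possibilities for $v_\infty^0$ are either (a) an unpunctured Maslov-$2$ disc in one of the classes $\beta_2,\beta_3,\beta_4$, with no $X(0;\epsilon)$ component attached (the class $\beta_1$ is excluded in (a) because $\beta_1\cdot D_1=1$ while $D_1\not\subset F_2(0)_0$), or (b) a once-punctured disc representing $\beta_1$ asymptotic to an element of $\mathcal{R}_1(\lambda)$. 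In case (a) we obtain $\beta^{(i)}\in\{\beta_2,\beta_3,\beta_4\}$; in case (b), reversing the gluing and using that each cap $w_j$ contributes $k_j\,[S^2_{van}]$ to the total class identifies $\beta^{(i)}=\beta_1+k\,S^2_{van}$ with $k=\sum_j k_j$. Either conclusion contradicts \eqref{condforempty}.

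The main obstacle I expect is rigorously excluding parasitic contributions in the limit building: sphere bubbles, nontrivial cylindrical layers, and punctures asymptotic to multiply covered or non-minimal Reeb orbits. Sphere bubbles in $X(0;\epsilon)$ are ruled out by exactness of $\omega|_{X(0;\epsilon)}$ (as noted already in the discussion before Lemma \ref{gluing5}); sphere bubbles in $F_2(0)_0$ carry positive Chern number, so they either violate $\mu(\beta^{(i)})=2$ or are absorbed into the $\beta_i$ classification; non-minimal or multiply covered Reeb asymptotes are excluded by the uniform energy cap together with the action gap on $(S^3/\{\pm 1\},\lambda)$, exactly as used in Lemma \ref{gluing}. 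Once the SFT building is combinatorially restricted to the configurations above, the lemma is immediate, with $S_0(E,\epsilon,u)$ chosen to exceed the thresholds produced by the contradiction argument.
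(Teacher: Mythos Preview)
Your overall strategy coincides with the paper's: argue by contradiction, apply SFT compactness as the neck stretches, and then constrain the limit building. The weak point is \emph{how} you constrain it.

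The claim that ``the positive action gap above the shortest Reeb orbit, combined with the uniform Hofer energy bound, forces every asymptote to lie in $\mathcal R_1(\lambda)$ and caps the number of punctures'' is not justified. The only a priori bound is $\beta\cap\omega\le E$ with $E$ arbitrary; the resulting action bound $C=C(E)$ at a neck slice (this is exactly the constant the paper extracts in \eqref{sigmajbdd1}--\eqref{sigmajbdd2}) can exceed the minimal Reeb period many times over. So energy alone does not force the asymptotes to be simple or the outer disc to have a single puncture, and without that you cannot invoke the Cho--Oh classification (or the ``once-punctured analogue'' you have not established) to finish.

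The paper replaces this step by a pure virtual-dimension count. After extracting the building it observes that each piece lies in a moduli space of virtual dimension at least $2$: the inner piece with equality only when it is in some $\mathcal M^\sharp(X(0;\epsilon);k)$, the outer piece with equality only when it is in $\mathcal M^\sharp_{(1,1)}(T(u);F_2(0))$, and each symplectization level with equality only for a trivial cylinder. Since the glued moduli space has dimension $2$ (because $\mu(\beta_j)=2$) and gluing is a fiber product over the $2$-dimensional $\mathcal R_1(\lambda)$, every piece is forced to its minimum. This simultaneously pins down the number of punctures, the Reeb multiplicity, and the relative class, yielding $\beta_j=\beta_1+k_j[S^2_{van}]$. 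Your observation that $c_1(X(0;\epsilon))=0$ is one ingredient of this index computation, but the argument needs the full index/fiber-product bookkeeping rather than the energy-gap step to close.
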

\begin{proof}
Suppose to the contrary that there exists a sequence $\beta_j
\in H_2(F_2(0;\epsilon),T(u);\Z)$
and
$$
(\Sigma_j,v_j,z_j) \in \mathcal M_1((F_2(0;\epsilon),T(u));J_{S_j,\epsilon};\beta_j)
$$
such that $\beta = \beta_j$ satisfies (\ref{condforempty}) and
$S_j \to \infty$. By the same reason as in the proof of Proposition \ref{56}, we have a
constant $C > 0$ such that
\begin{equation}\label{sigmajbdd1}
\int_{v_j^{-1}(\{c_j\} \times S^3/\{\pm 1\})} (\Theta\circ v_j)^*\lambda \le C
\end{equation}
for some regular value $c_j \in [S_j-1,S_j]$ of $s \circ v_j$.
(Here $[-S_j,S_j] \times S^3/\{\pm 1\} \subset F_2(0;\epsilon)$.)
This implies that
\begin{equation}\label{sigmajbdd2}
\int_{v_j^{-1}(\{d_j\} \times S^3/\{\pm 1\})} (\Theta\circ v_j)^*\lambda \le C
\end{equation}
where $d_j \in [-S_j,-S_j+1]$ is a regular value of $s \circ v_j$.
\par
We then consider the subsets $\Sigma_j^{\text{\rm int}}$ and $\Sigma_j^{\text{\rm out}}$
of $\Sigma_j$ which are mapped to
$X(0;\epsilon) \setminus (c_j,\infty) \times S^3/\{\pm 1\}$ and
$F_2(0;\epsilon) \setminus (-\infty,d_j) \times S^3/\{\pm 1\}$ respectively. We put
$\Sigma_j^{\text{\rm neck}} = \Sigma_j^{\text{\rm int}} \cap \Sigma_j^{\text{\rm out}}$
which is mapped into $[d_j,c_j] \times S^3/\{\pm 1\}$.
\par
The action bound (\ref{sigmajbdd1}) implies that $(\Sigma_j^{\text{\rm int}},v_j)$
converge to a pseudo-holomorphic curve
$(\Sigma_{\infty}^{\text{\rm int}},v_{\infty})$ in $X(0;\epsilon)$
in compact $C^{\infty}$ topology.
\par
The bound (\ref{sigmajbdd2}) implies that $(\Sigma_j^{\text{\rm out}},v_j,z_j)$
converge to a pseudo-holomorphic curve
$(\Sigma_{\infty}^{\text{\rm out}},v_{\infty},z_{\infty})$ in $F_2(0)_0$
in compact $C^{\infty}$ topology.
\par
Moreover $(\Sigma_{\infty}^{\text{\rm neck}},v_{\infty})$
converges to a union of pseudo-holomorphic maps
$(\Sigma_{\infty,c}^{\text{\rm neck}},v_{\infty,c})$
in $\R \times S^3/\{\pm 1\}$, such that
$(\Sigma_{\infty}^{\text{\rm int}},v_{\infty})$,
$(\Sigma_{\infty}^{\text{\rm out}},v_{\infty},z_{\infty})$,
and $(\Sigma_{\infty,c}^{\text{\rm neck}},v_{\infty,c})$
($c=1,\dots,K$) comprise the limit of $(\Sigma_j,v_j,z_j)$
in an appropriate `stable map compactification'.
\begin{rem}
Here we do not discuss the full details of the proof of the above stable map convergence
result which, for example, follows from the argument used in \cite{hofer,BEHWZ03}.
We only need this stable map convergence to calculate the index of the
linearized operator of Cauchy-Riemann equation of $(\Sigma_j,v_j,z_j)$
by summing over the indices of the components of the limit and to analyze
the indices of each component.
\end{rem}

Now we provide details of dimension counting arguments of various moduli
spaces we have considered.

The virtual dimension of the moduli space
containing $(\Sigma_{\infty}^{\text{\rm int}},v_{\infty})$ is $2$ or larger and
is $2$ only if this moduli space is $\mathcal M_1(X(0;\epsilon),k)$.
The virtual dimension of the moduli space
containing $(\Sigma_{\infty}^{\text{\rm out}},v_{\infty})$ is $2$ or larger
and is $2$ only if it is  $\mathcal M^\sharp_{(1,1)}(T(u); F_2(0))$.
\par
Moreover for the case of $\Sigma_{\infty,c}^{\text{\rm neck}}
\cong \R \times S^1$, the virtual dimension of the moduli space
containing  $(\Sigma_{\infty,c}^{\text{\rm neck}},v_{\infty,c})$
is greater than or equal to 2, and is $2$ only if the component
$(\Sigma_{\infty,c}^{\text{\rm neck}}, v_{\infty,c})$ is a trivial cylinder.
\par
The assumption $\mu(\beta_j) =2$ is equivalent to
$\dim \mathcal M_1(F_2(0;\epsilon),T(u));J_{S_j,\epsilon};\beta_j) = 2$ which
is possible only when $(\Sigma_{\infty,c}^{\text{\rm neck}},v_{\infty,c})$
is a trivial cylinder. For otherwise the translation along
the $[-S_j,S_j]$-direction of $[-S_j,S_j] \times S^3/\{\pm 1\} \subset
(F_2(0;\epsilon),J_{S_j,\epsilon})$ would provide an additional parameter.
\par
Therefore, $\mathcal M_1(F_2(0;\epsilon),L(u,1-u));J_{S_j,\epsilon};\beta_j)$
for a sufficiently large $j$ has the same dimension as that of the
fiber product over $S^2$ of the moduli spaces containing
$(\Sigma_{\infty}^{\text{\rm int}},v_{\infty})$ and
$(\Sigma_{\infty}^{\text{\rm out}},v_{\infty})$. This implies that
both moduli spaces have dimension 2.

Hence $\beta_j = \beta_1 + k_j\, S^2_{van}$
for some $k_j$ if $j$ is sufficiently large.
This is a contradiction.
\end{proof}
Theorem \ref{POsingptt} now follows from Proposition \ref{56} and
Lemma \ref{nootherthingcontribute}.
(See Theorem \ref{POcanwithb} and \eqref{PO^L} for the definition of $\frak{PO}$.)
\qed
\section{Proof of Theorem \ref{nonvanish}}\label{proof3}
In this section, we prove Theorem \ref{nonvanish}.
Note that the trivialization of the family $\bigcup_{a \in \C} X_a$ as smooth manifold
identifies the cohomology class $[D_1]$ in $X_0$ and $[S^2_{van}]$ in $X_a$.
Using the proof of Theorem \ref{POsingptt} in the last section,
each of the degree $1$ in Lemma \ref{55} contributes $1$ to the
coefficient $2$ in front of the last term of (\ref{POsingpt}).
The homology classes $\beta_1$ and $\beta_1+[S^2_{van}]$ satisfy the relations
\begin{equation}\label{intnumber}
\beta_1 \cap [S^2_{van}] = 1, \quad (\beta_1+[S^2_{van}])\cap [S^2_{van}] = -1.
\end{equation}
Now we consider the cohomology class
$$
\frak b = T^{\rho}PD[S^2_{van}] \in H^2(\widehat F_2(0),\Lambda_+).
$$
Then (\ref{intnumber}) and
Theorem \ref{POsingptt} imply that the potential function with bulk,
$\frak{PO}^{\frak b}$, becomes
\begin{equation}\label{PObulk}
\frak{PO}^{\frak b}
= T^{u_1}y_1 + T^{u_2}y_2 +T^{2-u_1-2u_2}y_1^{-1}y_2^{-2} +
(e^{T^{\rho}}+ e^{-T^{\rho}})T^{1 -u_2}y_2^{-1}.
\end{equation}
(See \cite{toric2} Theorem 3.4.)
Now let $u_1, u_2$ be as in Theorem \ref{nonvanish}.
We put
\begin{equation}\label{rhoichi}
2\rho = u_2 - u_1 = u_2 - (1-u_2) = 2u_2 - 1
\end{equation}
and consider (\ref{PObulk}) at $u=(u_1, u_2)$ for some $\rho$. It becomes
\begin{equation}
T^{u_1}(y_1 + y_1^{-1}y_2^{-2} + 2y_2^{-1})
+ T^{u_1+2\rho}(y_2 + y_2^{-1})
+ (\text{\rm higher order})y_2^{-1}.
\end{equation}
The condition for $(y_1,y_2)$ being its critical point is
\begin{eqnarray}
0 &=& 1 - y_1^{-2}y_2^{-2} \label{crit41}\\
0 &=& -2y_1^{-1}y_2^{-3} - 2y_2^{-2} + T^{2\rho}(1-y_2^{-2})
+ (\text{\rm higher order})y_2^{-2}. \label{crit42}
\end{eqnarray}
We consider the solution $y_1y_2 =-1$ of (\ref{crit41}).
Then (\ref{crit42}) has a solution
$y_2 = \pm 1 + (\text{\rm higher order})$.
The proof of Theorem  \ref{nonvanish} is complete.
\qed
\begin{rem}
The result of this paper,
combining with the study of spectral invariants, also implies that
among the 4 quasi-morphisms
$$
Ham(S^2(1) \times S^2(1)) \to \R
$$
obtained by \cite{entov-pol06}
using quantum-cohomology (without
bulk deformations), two of them
are different from the other two.
This fact and the following theorem will be proven in a forthcoming paper \cite{toric4}.
(We use spectral invariants with bulk deformation to prove Theorem \ref{main1}.)
\end{rem}
\begin{theorem}\label{main1}
Let $\widetilde{Ham}(S^2(1) \times S^2(1))$ be the
universal cover of the group of Hamiltonian diffeomorphisms
of $S^2(1) \times S^2(1)$.
Then there exists an infinitely many
Calabi quasi-morphisms $\widetilde{Ham}(S^2(1) \times S^2(1))
\to \R$ such that any finite subset thereof is linearly independent.
\end{theorem}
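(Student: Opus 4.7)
The plan is to derive Theorem \ref{main1} from Theorem \ref{nonvanish} via the Entov--Polterovich theory of spectral invariants in the bulk-deformed formulation developed in \cite{toric2}. For each $u \in (0,u_0]$ I would attach a homogeneous partial Calabi quasi-morphism $\mu_u : \widetilde{Ham}(S^2(1)\times S^2(1)) \to \R$ whose super-heavy set contains the Lagrangian torus $T(u)$, and then exploit the pairwise disjointness of the $T(u)$'s from Theorem \ref{maintheorem}(3) to show that any finite subfamily $\{\mu_{u_1},\dots,\mu_{u_N}\}$ is linearly independent.

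First I would construct $\mu_u$ as follows. Take the bulk class $\frak b(u) = T^{\rho(u)}\,\mathrm{PD}[S^2_{van}]$ with $2\rho(u) = 2u_2-1$, as in \eqref{rhoichi}, and consider the bulk-deformed potential $\frak{PO}^{\frak b(u)}$ written in \eqref{PObulk}. The critical point computation in the proof of Theorem \ref{nonvanish} shows that on the branch $y_1y_2 = -1$ there are non-degenerate critical points $y_2 = \pm 1 + O(T)$, producing a one-dimensional field summand in the Jacobian ring $\operatorname{Jac}(\frak{PO}^{\frak b(u)})$. Under the ring isomorphism $QH^*_{\frak b(u)}(M)\cong \operatorname{Jac}(\frak{PO}^{\frak b(u)})$ this provides an idempotent $e(u) \in QH^*_{\frak b(u)}(M)$ that projects onto a field direct summand. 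The asymptotic bulk-deformed spectral invariant $\rho^{\frak b(u)}(\,\cdot\,;e(u))$ then yields a homogeneous partial Calabi quasi-morphism $\mu_u$. Since $HF((T(u),(\frak b(u),b)),(T(u),(\frak b(u),b))) \ne 0$ by Theorem \ref{nonvanish} and the closed--open map of \cite{fooo-book} carries $e(u)$ to a unit in this Floer cohomology, the torus $T(u)$ is $\mu_u$-super-heavy.

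Next I would establish linear independence. Fix any sequence of distinct parameters $u_1,u_2,\dots \in (0,u_0]$, and for any finite $N$ use Theorem \ref{maintheorem}(3) to choose pairwise disjoint open neighborhoods $U_i \supset T(u_i)$. For each $i$, select an autonomous Hamiltonian $H_i$ supported in $U_i$ with $H_i \equiv 1$ on $T(u_i)$, and subtract a bump supported in $U_i\setminus T(u_i)$ so that $\operatorname{Cal}(H_i) = 0$. Let $\widetilde{\phi}_i \in \widetilde{Ham}(M)$ denote the lift of the time-one map of $H_i$. Super-heaviness of $T(u_i)$ for $\mu_{u_i}$ gives $\mu_{u_i}(\widetilde{\phi}_i) \ge 1 \ne 0$. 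For $j\ne i$, the support of $H_i$ is disjoint from the $\mu_{u_j}$-super-heavy set $T(u_j)$; applied in the super-heavy form, the partial Calabi property forces $\mu_{u_j}(\widetilde{\phi}_i) = \operatorname{Cal}(H_i) = 0$. Thus the matrix $(\mu_{u_j}(\widetilde{\phi}_i))_{i,j}$ is diagonal with non-zero diagonal, whence $\{\mu_{u_i}\}_{i\ge 1}$ is linearly independent.

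The main obstacle is the upgrade from \emph{heaviness} to \emph{super-heaviness} of $T(u)$, together with the associated vanishing of $\mu_{u_j}$ on Hamiltonians supported off $T(u_j)$. This reduces to two points: (i) non-degeneracy of the relevant critical point of $\frak{PO}^{\frak b(u)}$, which is a direct check on \eqref{crit41}--\eqref{crit42} after inserting the bulk, so that $e(u)$ is the unit of a genuine field summand of $QH^*_{\frak b(u)}(M)$; and (ii) non-vanishing of the image of $e(u)$ under the bulk-deformed closed--open map into $HF^*((T(u),(\frak b(u),b)),(T(u),(\frak b(u),b)))$, which requires the full apparatus of spectral invariants with bulk deformation to be set up on $\widetilde{Ham}(M)$ as announced in \cite{toric4}. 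Once these two ingredients are in place, the cobordism-style argument of Entov--Polterovich adapts verbatim, and the rest of the proof is formal.
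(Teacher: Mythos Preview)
The paper does not prove Theorem \ref{main1}. Immediately before the statement the authors write that ``this fact and the following theorem will be proven in a forthcoming paper \cite{toric4}'', and no argument is given in the present article. So there is no proof here to compare your proposal against.

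That said, your outline is consonant with what the paper announces: it says explicitly that spectral invariants \emph{with bulk deformation} are the intended tool, and your plan of producing, for each $u$, an idempotent in $QH^*_{\frak b(u)}(M)$ via a nondegenerate critical point of $\frak{PO}^{\frak b(u)}$, then passing to an asymptotic spectral quasi-morphism for which $T(u)$ is super-heavy, and finally separating the quasi-morphisms using the disjointness of the $T(u)$, is precisely the strategy later carried out in \cite{toric4}. You also correctly flag the two genuine inputs that lie outside this paper: the construction of bulk-deformed spectral invariants on $\widetilde{Ham}(M)$, and the verification that the closed--open map sends $e(u)$ to a unit.

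One caution on the independence step: be careful to distinguish the partial symplectic quasi-state $\zeta_u$ (on functions) from the Calabi quasi-morphism $\mu_u$ (on $\widetilde{Ham}$). Super-heaviness of $T(u_j)$ gives $\zeta_{u_j}(H_i)=0$ when $H_i$ vanishes on $T(u_j)$, but translating this into $\mu_{u_j}(\widetilde\phi_i)=0$ and $\mu_{u_i}(\widetilde\phi_i)\ne 0$ requires the precise relation between $\zeta$ and $\mu$ and a normalization of $H_i$ consistent with that relation. Your bump-subtraction to force $\operatorname{Cal}(H_i)=0$ is the right idea, but the argument as written leans on the ``partial Calabi property'' in a way that is not quite the super-heaviness statement; this is exactly the kind of bookkeeping that \cite{toric4} is meant to supply.
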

\begin{rem}\label{int-result}
The statement that Fukaya category of a toric manifold is split-generated by
(strongly) balanced torus fibers,
which we will prove in a future article with M. Abouzaid, implies that
$\varphi(T(u)) \cap (S^1_{\text{\rm eq}} \times S^1_{\text{\rm eq}}) \ne \emptyset$ for
any Hamiltonian diffeomorphism.
This intersection statement also follows from the theory of spectral invariant
with bulk deformation.
We will discuss these points in a future article \cite{toric4}.
\end{rem}
\noindent
\section{Appendix: $\mathfrak m_0(1)$ in the canonical model.}\label{secapend}
\par
\bigskip
In section 5.4 of \cite{fooo-book}, we gave a construction of the canonical model
for the filtered $A_{\infty}$-algebra (see \cite{fooo-can} for a concise exposition).
In \cite{fooo-book}, we first constructed a filtered $A_{\infty}$-algebra structure
$\{\mathfrak m_k\}$ on a certain subcomplex $C(L;\Lambda_0)$ of the smooth
singular chain complex $S(L;\Lambda_0)$ with $\Lambda_0$-coefficients.
Then using the argument of summation over certain rooted decorated
trees, we obtained its canonical model, i.e., a filtered $A_{\infty}$ algebra structure $\{\mathfrak m_k^{\text{\rm can}}\}$ on
the classical cohomology $H(L;\Lambda_0)$ with $\Lambda_0$-coefficients.
\par
Here we  only give the definition of the set $G^+_{k}$ of rooted decorated trees used
in the construction of $\mathfrak m_0^{\text{\rm can}}$.
(See (5.4.31) in \cite{fooo-book} or section 3 of \cite{fooo-can} for details.)
The set $G^+_{k}$ consists of  quintets $(T,i,v_0,V_{\text{\rm tad}},\eta)$:
\begin{enumerate}
\item $T$ is a tree with an embedding $i$ to the unit disc.
\item The set of vertices of valency 1 is the disjoint union of the set
$C^0_{\text{\rm ext}}(T)$ of exterior vertices and $V_{\text{\rm tad}}$, tad poles. We set  $k= \# C^0_{\text{\rm ext}}(T)$.
\item A root vertex $v_0$ is an element of $C^0_{\text{\rm ext}}(T)$.
\item  Denote by $C^0_{\text{\rm int}}(T)$ the union of the set of vertices of valency at least 2 and $V_{\text{\rm tad}}$.
The set $C^0_{\text{\rm int}}$ is equipped with $\eta$, which assign a class in $H_2(M,L;\Z)$ to
each element of $C^0_{\text{\rm int}}$.
\item For each vertex $v \in C^0_{\text{\rm int}}(T)$ with $\eta(v) \neq 0$, the valency at $v$ is at least 3.
\end{enumerate}

In this appendix, we summarize basic properties of the potential function for the canonical model
under Assumption \ref{Maslov 2} below.
Namely, we prove the following Theorems \ref{POcan} and \ref{POcanwithb}.
Let $M$ be a symplectic manifold and $L$ be a relatively
spin Lagrangian submanifold.
\begin{ass}\label{Maslov 2}
If $\beta \in \pi_2(M,L)$ is nonzero and
$\mathcal M_{k+1}(\beta) \ne \emptyset$,
then $\mu_L(\beta) \ge 2$.
\end{ass}
Typical examples of Lagrangian submanifold satisfying this assumption
are
\begin{enumerate}
\item[(a)] Lagrangian torus fibers of Fano toric manifolds \cite{cho-oh}, \cite{toric1},
\item[(b)]monotone Lagrangian submanifolds with minimal Maslov number $2$,
\item[(c)]$\dim L =2$ and the almost complex structure is generic.
\end{enumerate}
Let $L$ be a relatively spin Lagrangian submanifold such that it satisfies Assumption \ref{Maslov 2}
and $\beta \in \pi_2(M,L)$ with  $\mu_L(\beta) = 2$.
In this case it is easy to see
that the moduli space $\mathcal M_{1}(\beta)$ has no boundary
(in the sense of Kuranishi structure) and
$\dim \mathcal M_{1}(\beta) = n$. So the virtual
fundamental cycle
\begin{equation}
ev_{0*}([\mathcal M_{1}(\beta)]) \in H_n(L;\Z) \cong \Z
\label{23.76}
\end{equation}
is well-defined. (It is an integral class since $\mathcal M_1(\beta)$
has no sphere components and so has trivial automorphism group.)
\begin{theorem}\label{POcan}
Let $(M,L)$ satisfy Assumption \ref{Maslov 2} and consider the canonical model
$(H(L;\Lambda_{0,\text{\rm nov}}),\mathfrak m^{\text{\rm can}})$
of the filtered $A_{\infty}$ algebra in Theorem A of \cite{fooo-book}. Then we have
the deformed
$$
\mathfrak m_0^{\text{\rm can}}(1) = \sum_{\mu_L(\beta) = 2}
e T^{\omega(\beta)}
ev_{0*}([\mathcal M_{1}(\beta)]).
$$
Here right hand side is an element of
$H_n(L;\Lambda_{0,\text{\rm nov}}) \cong H^0(L;\Lambda_{0,\text{\rm nov}})$.
\end{theorem}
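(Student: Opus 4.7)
The plan is to unwind $\mathfrak m_0^{\text{\rm can}}(1)$ as the sum over $T\in G^+_1$ dictated by the tree construction of the canonical model, and to show that under Assumption \ref{Maslov 2} Maslov positivity forces every tree to vanish after the harmonic projection $\Pi$ except for the most degenerate one: a single edge from the root to a tad pole decorated by $\beta$ with $\mu_L(\beta)=2$. I would begin by recalling from Section 5.4 of \cite{fooo-book} (see also \cite{fooo-can}) that $\mathfrak m_0^{\text{\rm can}}(1)$ is a sum over $T\in G^+_1$ of tree operations $\mathfrak m_T$, weighted by $T^{\sum \omega(\eta(v))}e^{\sum \mu_L(\eta(v))/2}$, in which each non-tad interior vertex $v$ of valency $k+1$ with $\eta(v)=\beta_v$ contributes $\mathfrak m_{k,\beta_v}$ on smooth singular chains, each tad pole $w$ with $\eta(w)=\gamma_w$ inserts the chain $\mathfrak m_{0,\gamma_w}(1)=ev_{0*}[\mathcal M_1(\gamma_w)]$, each internal edge carries the chain homotopy $G$ (of degree $+1$), and the root edge carries the harmonic projection $\Pi$.

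Next I would track an \emph{excess} invariant: for a chain $c$ of dimension $d$ in the $n$-manifold $L$, set $\operatorname{exc}(c):=d-n$. From $\dim \mathcal M_{k+1}(\beta)=n+\mu_L(\beta)+k-2$ and the fiber-product formula at each interior vertex (in which every input arriving through an internal edge labelled by $G$ contributes an extra $+1$ to the dimension), the output excess at a non-tad vertex $v$ of valency $k+1$ with decoration $\beta_v$ and sub-tree excesses $e_1,\dots,e_k$ is
\[
\mu_L(\beta_v)+2k-2+\sum_{i=1}^{k}e_i,
\]
while a tad pole with decoration $\gamma$ outputs excess $\mu_L(\gamma)-2\geq 0$ by Assumption \ref{Maslov 2}. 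Combined with the valency conditions of $G^+_1$ (namely $k+1\geq 3$ when $\beta_v\neq 0$, $k+1\geq 2$ when $\beta_v=0$), an induction on the number of vertices yields $\operatorname{exc}\geq 0$ for every sub-tree, with strict inequality $\operatorname{exc}\geq 2$ as soon as the sub-tree contains any non-tad vertex of valency $\geq 3$. Since $H_d(L;\Lambda_{0,\text{\rm nov}})=0$ for $d>n$, such trees are killed by $\Pi$ at the root.

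The only trees surviving the excess screening are therefore composed exclusively of tad poles and possibly chains of valency-two classical vertices (insertions of $\mathfrak m_{1,0}=d$); these last are eliminated either by $\Pi\circ d=0$ at the root, or by the standard side conditions $G^2=0$ and $\Pi G=G\Pi=0$ imposed on the canonical choice of homotopy $G$. What remains is the one-edge tree joining the root to a single tad pole decorated by some $\beta$. Of these, only those with $\mu_L(\beta)=2$ survive, for higher Maslov indices produce strictly positive tad-pole excess; Assumption \ref{Maslov 2} further guarantees that $\mathcal M_1(\beta)$ has no codimension-one boundary and trivial automorphism group, so $ev_{0*}[\mathcal M_1(\beta)]$ defines a well-defined integer class in $H_n(L;\Z)\cong H^0(L;\Z)$, and summing over such $\beta$ reproduces the stated formula. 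The main obstacle is the rigorous treatment of the valency-two classical vertices together with chain-level signs; once the standard side-condition and sign conventions are fixed, the dimension-versus-excess dichotomy enforced by Assumption \ref{Maslov 2} makes the theorem essentially automatic.
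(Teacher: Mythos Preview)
Your proposal is correct and follows essentially the same route as the paper's proof: both arguments reduce $\mathfrak m_0^{\text{can}}(1)$ to the tree sum over $G^+_1$ and then use Maslov positivity together with the vanishing of $H_d(L)$ for $d>n$ to kill every tree except the single-edge one. The only substantive difference is bookkeeping: the paper computes the total homological degree of $\mathfrak m_\Gamma(1)$ in one stroke as $n+\mu_L(\sum_v\eta(v))-2$, whereas you track an ``excess'' quantity inductively vertex by vertex. Your approach yields the same conclusion with more intermediate information but also more work.

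Two minor comments. First, condition (5) in the paper's definition of $G^+_k$ almost certainly contains a typo: the stability condition in the canonical-model tree formula (cf.\ \S 5.4 of \cite{fooo-book}) should read ``$\eta(v)=0$ implies valency $\ge 3$,'' not the converse. With the correct condition, valency-two classical vertices (those you treat via $\Pi\circ d=0$ and the side conditions on $G$) are excluded from $G^+_1$ outright, so that portion of your argument is unnecessary, though not wrong. Second, once the correct stability condition is in place, the paper's one-line degree count already forces a unique interior vertex: total Maslov $\le 2$ plus $\mu_L(\eta(v))\ge 2$ at every tad pole forces exactly one tad pole and $\eta=0$ elsewhere, but $\eta=0$ requires valency $\ge 3$, which is impossible in a tree with only two leaves (the root and the single tad pole). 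This short-circuits your inductive excess analysis entirely.
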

\begin{rem}\label{Remark 23.78}
We remark that Assumption \ref{Maslov 2} in general depends on the
almost complex structure $J$ of $M$. If we fix
$J$ satisfying Assumption \ref{Maslov 2}, then
$\mathfrak m_0^{\text{\rm can},J}(1)$ is well-defined and
is independent of the other choices.
If we have $J$ and $J'$ both of which satisfy Assumption \ref{Maslov 2},
then $\mathfrak m_0^{\text{\rm can},J}(1) = \mathfrak m_0^{\text{\rm can},J'}(1)$
if $J$ can be joined to $J'$ by a path of compatible
almost complex structures satisfying Assumption \ref{Maslov 2}.
\par
This follows from Theorem \ref{POcan} and the fact that
the homology class of the virtual fundamental cycle
(\ref{23.76}) is well-defined in the above sense.
\end{rem}
\begin{proof}
We consider the filtered $A_{\infty}$ structure
$$
\mathfrak m_k : B_kC(L;\Lambda_{0,\text{\rm nov}})[1] \to C(L;\Lambda_{0,\text{\rm nov}})[1]
$$
on the countably generated subcomplex $C(L;\Lambda_{0,\text{\rm nov}})$
of smooth singular chain complex.
By definition we have
$$
\mathfrak m_{k,\beta} = 0
$$
unless $\beta=\beta_0$ or $\mu_L(\beta) \ge 2$.
\par
Now starting from the filtered $A_{\infty}$-structure $\{\mathfrak m_k\}$, we construct the
filtered $A_{\infty}$ structure $\{\mathfrak m_0^{\text{\rm can}}\}$ on $H(L;\Lambda_{0,\text{\rm nov}})$
by the method of \S 5.4.4 in \cite{fooo-book}.
Then we have
$$
\mathfrak m^{\text{\rm can}}_{k}(1) = \sum_{\Gamma} \mathfrak m_{\Gamma}(1)
$$
where $\Gamma$ runs over the set $G^+_{1}$.
\par
By definition, each element of $G^+_{1}$ has no
exterior edge other than the root vertex $v_0$.   Hence
all vertices other than $v_0$ with one edge must be a tad pole.
There must be at least one tad pole.
If the sum of Maslov indices of the class
assigned to the vertices is greater than $2$,
then $\mathfrak m_{\Gamma}(1)$ for such a $\Gamma$
is zero, since it will be an element of
$H_{n+\mu_L(\beta) - 2}$. (Here $\beta$ is the
sum of the all homotopy class assigned to the
vertices.)
\par
Therefore if $\mathfrak m_{\Gamma}(1)$ is non-zero then
there can be only one vertex (that is the tad pole) and
no other vertices. Namely we have obtained
$$
\mathfrak m_0^{\text{\rm can}}(1) = \sum_{\mu_L(\beta) = 2}
e T^{\omega(\beta)}
ev_{0*}([\mathcal M_{1}(\beta)]).
$$
\end{proof}

Under Assumption \ref{Maslov 2}, any $b = \sum x_i {\mathbf e}_i$, where
$x_i \in \Lambda_0$ of degree 0 and $\{\mathbf{e}_i\}$ is the basis
of $H^1(L;\Z)$ chosen before,
is a solution of the (weak) Maurer-Cartan equation.

Then the following theorem can be proved by the same
strategy of \cite{toric1} section 11.

\begin{theorem}\label{POcanwithb}
Let $(M,L)$ satisfy Assumption \ref{Maslov 2}. we have
$$
\mathfrak m_0^{\text{\rm can},b}(1) = \sum_{\mu_L(\beta) = 2}
e T^{\omega(\beta)} \exp{(b \cap \partial \beta)}
ev_{0*}([\mathcal M_{1}(\beta)]).
$$
Here $b \cap \partial \beta = \sum x_i (\mathbf{e}_i \cap \partial \beta) \in \Lambda_0$.
\end{theorem}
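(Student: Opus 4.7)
The plan is to adapt the argument of Theorem \ref{POcan} to the $b$-deformed setting. First I would rewrite
\[
\mathfrak m_0^{\text{\rm can},b}(1) = \sum_{\Gamma\in G^+_0} \mathfrak m_\Gamma^b(1),
\]
where $\mathfrak m_\Gamma^b$ is obtained from $\mathfrak m_\Gamma$ by inserting arbitrarily many copies of $b$ in every possible position at each interior vertex of $\Gamma$; the sum ranges over decorated trees with the root as their only exterior vertex.

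Next I would repeat the degree-counting argument of Theorem \ref{POcan}, now accounting for the $b$-insertions. Each insertion of $b$ at a vertex increases the arity of the local $\mathfrak m$-operation by one while adding an input of cohomological degree one; these two shifts cancel, so the $b$-insertions leave the vertex-by-vertex degree balance unchanged. The standard Euler count $\sum_v \text{\rm val}(v)=2(V-1)$ then forces
\[
\#\{\text{tad poles in }\Gamma\}+\#\{\text{non-tad internal vertices in }\Gamma\}=1.
\]
Combined with item $(5)$ in the definition of $G^+_0$, which forces any non-tad interior vertex with $\eta(v)\neq 0$ to have valency at least $3$, the only surviving tree is the root--tad pole tree of Theorem \ref{POcan}, and Assumption \ref{Maslov 2} forces the class $\beta$ at the tad pole to satisfy $\mu_L(\beta)=2$.

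The contribution of the $b$-deformed tad pole is $\sum_{l\geq 0}\mathfrak m_{l,\beta}(b,\ldots,b)$, and the main analytic step is the divisor axiom for $b\in H^1(L;\Lambda_0)$, which asserts that $\mathfrak m_{l,\beta}(b,\ldots,b)=\tfrac{(b\cap\partial\beta)^l}{l!}\,ev_{0*}[\mathcal M_1(\beta)]$. I would prove this by analyzing the forgetful morphism $\mathcal M_{l+1}(\beta)\to\mathcal M_1(\beta)$, whose fibre over a disc $u$ is the cyclically ordered configuration space of $l$ additional boundary marked points on $\partial D^2$, a simplex of relative volume $1/l!$ inside $(\partial D^2)^l$; each factor $ev_i^*b$ integrates to $b\cap\partial\beta$ along $\partial D^2$ since the boundary of a disc in class $\beta$ represents $\partial\beta\in H_1(L;\Z)$. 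Summing over $l$ then gives $\exp(b\cap\partial\beta)\,ev_{0*}[\mathcal M_1(\beta)]$, and after weighting by $eT^{\omega(\beta)}$ and summing over $\beta$ one recovers the formula of the theorem.

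The principal obstacle will be the careful derivation of the divisor axiom, including the $1/l!$ normalization, the correct orientations and signs, and its compatibility with the Kuranishi perturbations used to construct the canonical model. I would carry this out following the strategy of \cite{toric1} Section 11, where the analogous computation is performed for torus fibers in the toric setting; the only additional ingredient needed here is that Assumption \ref{Maslov 2} allows one to discard bubble classes of Maslov index greater than $2$ throughout the analysis, just as in the proof of Theorem \ref{POcan}.
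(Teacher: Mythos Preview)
Your proposal is correct and aligns with the paper's own proof, which is literally the single sentence ``can be proved by the same strategy of \cite{toric1} section 11''; you have correctly unpacked that strategy into the two essential steps (tree reduction to a single tadpole, then the divisor computation $\sum_{l\ge 0}\mathfrak m_{l,\beta}(b,\dots,b)=\exp(b\cap\partial\beta)\,ev_{0*}[\mathcal M_1(\beta)]$) and correctly located the analytic work in the forgetful-map argument of \cite{toric1} \S 11.

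One point to tighten: the Euler identity $\sum_v\operatorname{val}(v)=2(V-1)$ by itself does \emph{not} force $\#\{\text{tadpoles}\}+\#\{\text{non-tad internal vertices}\}=1$ (e.g.\ root, one trivalent internal vertex, two tadpoles satisfies the identity with that sum equal to $3$). What actually pins down the single-tadpole tree is exactly the Maslov/degree count you invoke from Theorem \ref{POcan}: the output lands in $H_{n+\mu_L(\beta)-2}$, so nonvanishing forces $\mu_L(\beta)\le 2$; Assumption \ref{Maslov 2} then allows at most one vertex with $\eta(v)\ne 0$, which must be the tadpole, and the remaining $\eta(v)=0$ vertices are excluded by the stability condition (valency $\ge 3$) together with the fact that the residual tree is a path. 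Also, in the paper's indexing the relevant tree set is $G^+_1$ (the root counts as the unique exterior vertex), not $G^+_0$.
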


We remark
\begin{equation}\label{PO^L}
\frak{PO}^L(b) = \mathfrak m_0^{\text{\rm can},b}(1) \cap [L]/e.
\end{equation}
We note by definition that
$$
ev_{0*}([\mathcal M_{1}(\beta)]) \cap [L] =\text{\rm deg}\left[ev_{0*}:\mathcal M_{1}(\beta)
\to L \right].
$$
Therefore, using the same argument in Remark 7.2, we have the following
\begin{theorem}\label{POMaslov 2}
Let $\{J_t \}_{0\leq t \leq 1}$ be a family of tame almost complex structures such that
Assumption \ref{Maslov 2} holds for all $J_t$.
Then we have
$\mathfrak{PO}^{L,J_0} = \mathfrak{PO}^{L,J_1}$.
\end{theorem}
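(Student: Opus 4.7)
The plan is to use Theorem \ref{POcanwithb} to reduce the statement to the $J$-invariance of the integer degree $\deg[ev_0 : \mathcal M_1^{J}(\beta) \to L]$ for each $\beta \in \pi_2(M,L)$ with $\mu_L(\beta) = 2$. Indeed, combining Theorem \ref{POcanwithb} with \eqref{PO^L} gives
$$
\frak{PO}^{L,J_t}(b) \;=\; \sum_{\mu_L(\beta) = 2} T^{\omega(\beta)} \exp(b \cap \partial \beta)\,\deg[ev_0 : \mathcal M_1^{J_t}(\beta) \to L],
$$
and since the weights $T^{\omega(\beta)}$ and $\exp(b \cap \partial \beta)$ are $t$-independent, it is enough to show that $ev_{0*}([\mathcal M_1^{J_t}(\beta)]) \in H_n(L;\Z)$ is independent of $t \in [0,1]$ for each such $\beta$.

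To do this, I would construct the parametrized moduli space
$$
\mathcal M_1^{\{J_t\}}(\beta) \;=\; \bigsqcup_{t \in [0,1]} \mathcal M_1^{J_t}(\beta) \times \{t\}
$$
as a space with Kuranishi structure of virtual dimension $n+1$ equipped with a parametrized evaluation map $\widetilde{ev}_0 : \mathcal M_1^{\{J_t\}}(\beta) \to L$. The two $t$-endpoints contribute $\mathcal M_1^{J_0}(\beta)$ and $-\mathcal M_1^{J_1}(\beta)$ to the codimension-one boundary. Once these are the only codimension-one strata, the pushforward of the virtual fundamental chain under $\widetilde{ev}_0$ is a chain in $L$ whose boundary realizes $ev_{0*}([\mathcal M_1^{J_0}(\beta)]) - ev_{0*}([\mathcal M_1^{J_1}(\beta)])$, yielding the desired equality of degrees.

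The essential step, and the only place Assumption \ref{Maslov 2} enters, is excluding disc bubbling as a source of codimension-one boundary. A codimension-one disc degeneration would split $\beta = \beta' + \beta''$ with $\mu_L(\beta') + \mu_L(\beta'') = 2$; by Assumption \ref{Maslov 2} applied to every $J_t$, each nonzero summand has Maslov index at least $2$, forcing one summand to be the trivial class, so the degeneration is spurious. Sphere bubbling strata automatically lie in codimension at least two because of the two-dimensional reparametrization group acting on each sphere bubble, and so contribute nothing at codimension one.

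With all bubbling excluded in codimension one, $\mathcal M_1^{\{J_t\}}(\beta)$ is a genuine Kuranishi cobordism from $\mathcal M_1^{J_0}(\beta)$ to $\mathcal M_1^{J_1}(\beta)$; the $\beta$-by-$\beta$ equality of degrees follows, and summing over $\beta$ with $\mu_L(\beta) = 2$ yields $\frak{PO}^{L,J_0} = \frak{PO}^{L,J_1}$. I expect the main technical obstacle to be setting up the parametrized Kuranishi structure and choosing a compatible multisection perturbation so that all codimension-one faces really are exhausted by the $t = 0$ and $t = 1$ endpoint strata, uniformly over the family; this is a standard but nontrivial application of the machinery developed in \cite{fooo-book} Chapter 7, and needs to be executed in a way compatible with Assumption \ref{Maslov 2} along the entire path $\{J_t\}$.
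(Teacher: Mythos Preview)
Your proposal is correct and follows the same line as the paper: the paper simply refers to ``the same argument as in Remark \ref{Remark 23.78}'', which is precisely the reduction via Theorem \ref{POcanwithb} to the $J$-independence of the degree $ev_{0*}([\mathcal M_1(\beta)])$, together with the standard cobordism/parametrized moduli space argument under Assumption \ref{Maslov 2}. You have spelled out in detail exactly the step the paper leaves implicit.
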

\begin{rem}\label{welldefrem}
In Theorem \ref{POMaslov 2} we put a rather strong
hypothesis that Assumption \ref{Maslov 2} holds for all $J_t$.
This assumption is satisfied for the family of almost complex
structures used in the first proof of Theorem \ref{hirzthem} in
section \ref{proof2}.
\par
In our two dimensional case,
Assumption \ref{Maslov 2} holds for generic $J$ but there may
exist a codimension one set of $J$ for which Assumption \ref{Maslov 2}
is not satisfied.
As is shown in \cite{fooo-book} the potential function
$\frak{PO} : \mathcal M_{\text{\rm weak}}(L) \to \Lambda_0$ is
well-defined. In our two dimensional case,
dimension counting implies that
$\sum_{k=0}^{\infty}\frak m_k(b^{\otimes k}) \in H^0(L;\Lambda_0)$
for any $b \in H^1(L;\Lambda_0)$.
(This is a consequence of Assumption \ref{Maslov 2}.)
Therefore we have an isomorphism $H^1(L;\Lambda_0) \to \mathcal M_{\text{\rm weak}}(L)$.
However this isomorphism depends on the choice of almost complex structure.
Therefore if we regard $\frak{PO}$ as a function
$H^1(L;\Lambda_0) \to \Lambda_0$ it is well-defined up to
coordinate change congruent to the identity modulo $\Lambda_{0,\text{\rm nov}}^+$.
We however remark that for the proof of Theorem \ref{maintheorem}
and other applications in this paper, it is
enough to calculate  $\frak{PO}$ modulo coordinate change.
\end{rem}

\end{document}